\documentclass{amsart}

\usepackage{graphicx} 
\usepackage{url}

\numberwithin{equation}{section}

\usepackage{amssymb,mathtools}

\usepackage{multicol}

\newtheorem{theorem}{Theorem}[section]
\newtheorem{lemma}[theorem]{Lemma}
\newtheorem{assumption}[theorem]{Assumption}
\newtheorem{corollary}[theorem]{Corollary}

\newcommand{\Id}{\mathrm{Id}}

\newcommand{\propar}{P_\parallel}
\newcommand{\properp}{P_\perp}

\newcommand{\tn}[1][n]{t_{#1}}

\newcommand{\normempty}[1]{\left\lVert #1 \right\rVert}

\newcommand{\psifilter}{\textcolor{blue}{\Psi}}
\newcommand{\phifilter}{\textcolor{blue}{\Phi}}
\newcommand{\Chi}{X}
\newcommand{\chifilter}{\textcolor{blue}{\Chi}}
\newcommand{\upsilonfilter}{\textcolor{blue}{\Upsilon}}
\newcommand{\funexpGauss}{\varrho}

\newcommand{\linop}{L}
\newcommand{\mgFieldStrength}{B}
\DeclareMathOperator{\sinc}{sinc}
\DeclareMathOperator{\sinch}{sinch}
\DeclareMathOperator{\tanch}{tanch}

\usepackage{xcolor}

\newcommand{\pos}[1][]{{q_{#1}}}
\newcommand{\vel}[1][]{{v_{#1}}}
\newcommand{\rem}{w}
\newcommand{\tpos}[1][]{{\widetilde{q}_{#1}}}
\newcommand{\tvel}[1][]{{\widetilde{v}_{#1}}}

\newcommand{\nonres}{\vartheta}
\newcommand{\defecterr}[2][]{{\Sigma_{#1}^{#2}}}

\newcommand{\elField}[1][]{{E_{#1}}}
\newcommand{\telField}[1][]{{\widetilde{E}_{#1}}}
\newcommand{\telFieldJac}[2][]{{\widetilde{E}_{#1}^{(#2)}}}
\newcommand{\elFieldJac}{E'}
\newcommand{\bdelFieldJac}[2][]{{M^{(#2)}_{\elField,#1}}} 
\newcommand{\remainelField}[2][]{{r^{(#2)}_{\elField,#1}}} 

\newcommand{\mgField}[1][]{{B_{#1}}}

\newcommand{\abs}[1]{\lvert #1 \rvert}

\NewDocumentCommand{\ts}{o}{\IfValueTF{#1}{t_{#1}}{t}}

\NewDocumentCommand{\mgFieldMatrix}{s o}{\IfBooleanTF{#1}{
		\mathbf{B}(\ts[#2], \pos[#2])
	}{
		\mathbf{B}\IfValueT{#2}{_{#2}}
}}

\NewDocumentCommand{\systemMatrix}{s o m o}{\IfBooleanTF{#1}{
		\Id #3 \tfrac{\tau}{2} \mgFieldMatrix[#4]
	}{
		\mathbf{R}_{#3\IfValueT{#4}{,#4}}\IfValueT{#2}{^{#2}}
}}

\newcommand{\auxMatrix}{\mathbf{S}}

\NewDocumentCommand{\defect}{s o}{\IfBooleanT{#1}{\widetilde}d_{\IfValueT{#2}{#2}}}

\NewDocumentCommand{\errorelFieldVec}{s o}{\IfBooleanT{#1}{\widetilde} g_{\IfValueT{#2}{#2}}}

\NewDocumentCommand{\defectPos}{s o}{\IfBooleanT{#1}{\widetilde}d_{q\IfValueT{#2}{,#2}}}

\NewDocumentCommand{\defectVel}{s o}{\IfBooleanT{#1}{\widetilde}d_{v\IfValueT{#2}{,#2}}}

\NewDocumentCommand{\errorPos}{s o}{\IfBooleanTF{#1}{
		\tpos[#2] - \pos[#2]
	}{
		e_{q\IfValueT{#2}{,#2}}
}}

\NewDocumentCommand{\error}{s o}{\IfBooleanTF{#1}{\widetilde}e_{\IfValueT{#2}{#2}}}

\NewDocumentCommand{\errorVel}{s o}{\IfBooleanTF{#1}{
		\tvel[#2] - \vel[#2]
	}{
		e_{v\IfValueT{#2}{,#2}}
}}

\NewDocumentCommand{\errorelField}{s o}{\IfBooleanTF{#1}{
		\telField[#2] - \elField[#2]
	}{
		e_{\elField\IfValueT{#2}{,#2}}
}}

\newcommand{\lipschitzelField}{L_{\elField}}

\begin{document}

\title[Boris-type exponential integrators]{Boris-type exponential integrators for charged-particle dynamics in strong magnetic fields}

\date{}

\author{Marlis Hochbruck}
\address{Institute for Applied and Numerical Mathematics,
	Karlsruhe Institute of Technology, Englerstra\ss e 2, 76131 Karlsruhe, Germany}
\email{marlis.hochbruck@kit.edu}
\thanks{Funded by the Deutsche Forschungsgemeinschaft (DFG, German Research Foundation) -- Project-ID 258734477 -- SFB 1173.}

\author{Sebastian Merk}
\address{Department of Mathematics, Technical University of Munich,
	Boltzmannstra\ss e 3, 85748 Garching bei M\"unchen, Germany}
\email{s.merk@tum.de}
\thanks{}

\subjclass[2020]{Primary 65L05, 65L20, 65L70, 78A35, 78A99}
\keywords{
	charged-particle dynamics,
	strong magnetic fields,
	filtered Boris integrators,
	exponential integrators,  
	uniform error bounds,  
	resonance effects}

\begin{abstract}
We study numerical time integration for the motion of a charged particle in a strong magnetic field, focusing on the regime in which the fast gyration is not resolved by the time step. Standard Boris-type methods are structure preserving and accurate when the gyration is resolved, but their error constants deteriorate with increasing field strength; filtered Boris methods improve this behavior but may suffer from singularities and resonance-induced error blow-up. In this work we consider the case of a constant strong magnetic field and derive a broad family of one-step exponential integrators from the variation-of-constants formula, formulated in terms of matrix-valued filter functions. This framework includes existing filtered Boris variants and permits the construction of schemes with uniformly bounded filters. Using a discrete variation-of-constants representation together with summation-by-parts arguments, we establish error bounds whose constants are independent of the magnetic field strength and characterize the filter conditions required for first- and second-order accuracy. In particular, we show that bounded filters avoid resonance blow-up and allow arbitrary time steps, at the cost of a controlled order reduction near resonant frequencies. Numerical experiments confirm the predicted convergence behavior, the different accuracy of position and velocity components, and the role of the leading error terms across a wide range of step-size-field-strength products.
\end{abstract}

\maketitle

\section{Introduction}\label{sec:intro}
We study time integration methods for the Euler--Lagrange system of a charged
particle in a strong magnetic field
\begin{subequations} \label{eq:eul-lag-filt-all}
	\begin{align}
		\dot{\pos}(t) & = \vel(t),
		\label{eq:eul-lag-filt-a}
		& \pos(0) = \pos[0],\\
		\dot{\vel}(t) &= \vel(t)\times \mgField(t,\pos) + \elField(t,\pos),
		& \vel(0) = \vel[0],
		\label{eq:eul-lag-filt-b}
	\end{align}
\end{subequations}
where $\normempty{\mgField[]} \gg 1$. The particle gyrates about the magnetic field lines with period $2\pi\delta$, where $\delta = 1/\normempty{\mgField[]}$, and this gyration is superimposed on a motion of the guiding center that takes place on the slow, $\mathcal{O}(1)$ scale fixed by $\elField[]$. 

Systems of the form \eqref{eq:eul-lag-filt-all} arise as the characteristic equations of the Vlasov equation and, in particular, as the particle push in numerical solutions
of the Vlasov--Maxwell system \cite[Sec. 15]{Bir18}, and more recently in variational approximations of the magnetic Schr\"odinger equation \cite{SchBHL25}, that could be used to solve coupled Maxwell--Schr\"odinger systems. In both settings, the electromagnetic field is not given but computed self-consistently, so that the field strength experienced by a particle is not known a priori. Commonly used Boris-type pushers \cite{Boris70} are accurate only for stepsizes
$\tau \lesssim \delta$ resolving the gyration. In the coupled setting, this forces $\tau$ to be chosen from a worst-case field strength, or requires adaptivity of the time step, which sacrifices the structure preservation that motivates the use of Boris-type methods.

We recall that the scheme in a formulation on a staggered grid reads
\begin{subequations}\label{eq:boris-leapfrog}
	\begin{align}
		\frac{\pos[n+1] - \pos[n]}{\tau}
		&=
		\vel[n+1/2],
		\label{eq:boris-leapfrog-a}
		& \mgField[n] &= \mgField(\ts[n], \pos[n]),
		\\[1mm]
		\frac{\vel[n+1/2] - \vel[n-1/2]}{\tau}
		&=
		\tfrac12 \bigl( \vel[n+1/2] + \vel[n-1/2] \bigr) \times \mgField[n] + \elField[n],
		& \elField[n] &= \elField(\ts[n], \pos[n]).
		\label{eq:boris-leapfrog-b}
	\end{align}
\end{subequations}
It is symmetric and volume-preserving \cite{QinZXLST13}, and it shows no drift in the energy over long times \cite{HaiL18}. It is of second order, but the corresponding error constants grow with $\normempty{\mgField[]}$, and the accuracy of \eqref{eq:boris-leapfrog} deteriorates as soon as $\tau \normempty{\mgField[]} \gtrsim 1$, cf. \cite[Fig.~8.2]{HaiLW20}.

This behavior is not particular to \eqref{eq:boris-leapfrog}. Problems of the form \eqref{eq:eul-lag-filt-all} are stiff in the second sense discussed in \cite{HocO10}: the linear part of \eqref{eq:eul-lag-filt-b} has purely imaginary eigenvalues $0$ and $\pm i \normempty{\mgField[]}$ of large modulus. An explicit method applied to \eqref{eq:eul-lag-filt-all} is stable only for $\tau \lesssim \delta$; implicit methods are stable for large stepsizes, but their accuracy deteriorates once the gyration is no longer resolved. The difficulty also affects the analysis. The usual convergence theory relies 
on Taylor expansion of the exact solution, whose derivatives grow with powers of $\normempty{\mgField[]}$, cf. \eqref{eq:eul-lag-filt-b}, so that the stepsize restriction reappears in the error constants. Bounds of this type yield the classical order of a method, while we are interested in its stiff order, i.e., in error bounds whose constants are independent of $\normempty{\mgField[]}$. This requires both a different construction and a different analysis.

Closest to the present work are the filtered Boris methods. Hairer, Lubich, and Wang~\cite{HaiLW20} modify the velocity update \eqref{eq:boris-leapfrog-b} by introducing a suitable filter function. Hairer, Lubich, and Shi~\cite{HaiLS22} study a filtered variational integrator with the same filter, in the regime $\delta \leq \tau^2 \ll 1$. The two constructions differ in how the rotation and the approximations to the velocities on the full time grid are realized. For a constant magnetic field, these methods are equivalent. The error analysis in both references relies on modulated Fourier expansions and yields second-order bounds for the positions and for the velocity component parallel to $\mgField[]$, together with first-order bounds for the perpendicular component.

Filters of this kind have a longer history in the analysis of highly oscillatory second-order problems $\ddot{y} = -\Omega^2 y + g(y)$ with $\normempty{\Omega} \gg 1$. Garc\'ia, Sanz-Serna, and Skeel~\cite{GarSS98} introduced the mollified impulse method, in which the nonlinearity is evaluated at an averaged argument, and Hochbruck and Lubich~\cite{HocL99} analyzed a Gautschi-type integrator that is exact for linear problems with constant inhomogeneity. In both cases, the error bounds are uniform in $\tau\normempty{\Omega}$, and the analysis produces conditions on the filter functions rather than bounds for a single scheme: the filters must vanish at the
resonant frequencies, and their remaining freedom can be used to reduce the error constant. Grimm and Hochbruck~\cite{GriH06} formulate such conditions for a two-parameter family of trigonometric integrators and obtain second-order bounds for the positions together with first-order bounds for the velocities. The structure of these results, and the technique of summation-by-parts on which they rely, is what we transfer to \eqref{eq:eul-lag-filt-all}.

A second line constructs exponential integrators for \eqref{eq:eul-lag-filt-all} with an emphasis on geometric structure: explicit symmetric schemes of classical order up to four \cite{WuW20}, energy-preserving schemes for a constant strong field \cite{Wan21}, and
continuous-stage energy-preserving methods designed to operate from normal to strong field regimes \cite{LiW22}. Nguyen, Joseph, and Tokman \cite{NguJT24, NguJT25} pursue the same idea from a computational perspective, comparing exponential integrators with conventional pushers in a particle-in-cell context and deriving Nystr\"om-type exponential schemes of second and third order. These constructions are likewise built from filter functions, but the orders attained are classical, no bounds uniform in $\normempty{\mgField[]}$ are established, and no conditions on the matrix functions themselves are formulated.

Among the Boris-type methods mentioned above, bounds of this kind are so far available only for the single filter $\psi(\zeta) = \tanch(\zeta/2)$, which, unfortunately, has singularities
at integer multiples of $2\pi$. As a consequence, the velocity error of the scheme by Hairer, Lubich, and Wang \cite{HaiLW20} blows up like the reciprocal distance of $\tau\normempty{\mgField[]}$ to $2k\pi$, cf.\ Fig.~8.2 in their paper.
Moreover, their second-order bounds of the position error require both a coupling $\tau \normempty{\mgField[]} \lesssim 1$ of the stepsize to the inverse field strength and a nonresonance condition on $\tau \normempty{\mgField[]}$. 
However, if $\mgField[]$ is not given explicitly but via a Maxwell solver, it is not practical to ensure such a nonresonance condition, since it might require adapting the time stepsize during the simulation, which might lead to further disadvantages. 

This motivated us to reconsider the construction of such integrators and to provide a scheme, which only uses uniformly bounded filter functions. We will prove that the error close to resonances drops to order one, but does not lead to blow up at any time stepsize. Order reduction and blow up already appear for constant magnetic fields 
$\mgField[]$ and thus, as a first step towards an analysis of the general scheme, we restrict ourselves to constant magnetic fields  in the regimes described above.  $\normempty{\mgField[]} \gg 1$ is the only large parameter, while $\elField[] = \mathcal{O}(1)$ and $\vel_0 = \mathcal{O}(1)$, where here and throughout the whole paper, the notation $\mathcal{O}$ means that the constants are independent of $\normempty{\mgField[]}$. 

The extension to non-constant fields is left to future work.

We therefore consider a generic one-step exponential integrator 
\begin{subequations}  \label{eq:gen-expint} 
	\begin{align}  
		\label{eq:gen-expint-pos}
		\pos[n+1] &= \pos[n] 
		+ \tfrac{\tau}{2} \bigl( \psifilter_{-} \vel[n]+  \psifilter_{+} \vel[n+1] \bigr)
		+ \tfrac{\tau^2}{4} \bigl(
		\chifilter_{-}\elField[n]
		+  \chifilter_{+} \elField[n+1] \bigr),	\\
		\label{eq:gen-expint-vel}
		\vel[n+1] &= e^{\tau \mgFieldMatrix[]} \vel[n] + \tfrac{\tau}{2}
		\bigl( \phifilter_- \elField[n]  + \phifilter_{+} \elField[n+1] \bigr),
	\end{align}
\end{subequations}
with given filters defined on the eigenvalues $0, \pm i \tau \normempty{\mgField[]}$ of $\tau\mgFieldMatrix[]$, where $\mgFieldMatrix[]$ denotes the matrix representing $\vel\mapsto \vel \times \mgField[]$:
\begin{equation*}
	\phifilter_\pm = \phi_\pm(\tau \mgFieldMatrix[]),
	\qquad
	\psifilter_\pm = \psi_\pm(\tau \mgFieldMatrix[]),
	\qquad
	\chifilter_\pm = \chi_\pm(\tau \mgFieldMatrix[]).
\end{equation*}
This class contains the filtered Boris variants of Hairer, Lubich, and Wang~\cite{HaiLW20} and Hairer, Lubich, and Shi~\cite{HaiLS22} but also more general methods which are derived from the variation-of-constants formula and applying exponential quadrature rules up to order two.
In contrast to modulated Fourier expansions used so far, our error analysis is based on the (discrete) variation-of-constants formula and partial integration/summation. This allows us to fully characterize the filter functions, study the error constants, and the behavior of the integrator at resonant frequencies.  

The filter functions of \cite{HaiLW20} are singular at $\tau\normempty{\mgField[]} \in 2\pi\mathbb{N}$, and the velocity error blows up like the reciprocal distance of $\tau\normempty{\mgField[]}$ to $2k\pi$. Within our framework, filters without such singularities are available. For these, the first-order bounds of Theorem~\ref{thm:error-bound} hold without any nonresonance assumption, so that arbitrary stepsizes are admissible; the blow-up is avoided at the cost of an order reduction in the velocity error. 

The paper is organized as follows. Section~\ref{sec:filt-boris-alg} recalls the filtered Boris algorithms and establishes their equivalence for a constant magnetic field. Section~\ref{sec:exp-int} derives the family of exponential integrators from the variation-of-constants formula. Section~\ref{sec:error} sets up the error recursion and the stability estimates, Sections~\ref{sec:order-1} and~\ref{sec:order-2} contain the first- and second-order error bounds and the refined analysis of the velocity error. In Section~\ref{sec:numerics}, we present numerical experiments which confirm the predicted orders in the different regimes of $\tau\normempty{\mgField[]}$ and the distinct behavior of the error components, and also show that the leading error terms describe the observed error over the whole range of $\tau\normempty{\mgField[]}$.

\section{Review on filtered Boris algorithms} \label{sec:filt-boris-alg}

We first review the recent work by Hairer, Lubich, Shi, and Wang \cite{HaiLS22,HaiLW20} on filtered Boris-type integrators. The algorithm from \cite{HaiLS22} written in one-step formulation on a staggered time grid is given as 
\begin{multicols}{2}
\begin{subequations} \label{eq:filt-boris-all}
	\noindent
	\begin{align}
		\vel[-] 
		&= 
		\vel[n-\frac12] + \frac{\tau}{2} \psifilter \elField[n], 
		\label{eq:filt-boris-a}
		\\
		\vel[+] -\vel[-] 
		&= 
		\frac{\tau}{2} \psifilter \bigl( \vel[+] +\vel[-] \bigr) \times \mgField[n],
		\label{eq:filt-boris-b}
	\end{align}
	\columnbreak
	\begin{align}
		\vel[n+\frac12] 
		&= 
		\vel[+] + \frac{\tau}{2} \psifilter \elField[n], 
		\label{eq:filt-boris-c}
		\\
		\pos[n+1] 
		&= 
		\pos[n] + \tau\vel[n+\frac12].
		\label{eq:filt-boris-d}
	\end{align}
\end{subequations}
\end{multicols}
\vspace*{-5ex}
\noindent
It reduces to the classical Boris algorithm for $\psi = 1$. \cite{HaiLS22} and previously \cite{HaiLW20} suggest to use the filter function 
\begin{equation} \label{eq:filter-HL}
\psifilter = \psi(\tau \mgFieldMatrix[n]), 
\qquad 
\psi(\zeta) = \tanch \bigl(\tfrac{\zeta}{2}\bigr) = \frac{\tanh\bigl(\tfrac{\zeta}{2}\bigr)}{\tfrac{\zeta}{2}},
\end{equation}
where the matrix $\mgFieldMatrix[n]$ is defined via
\begin{equation*}
\mgFieldMatrix[n] v = v \times \mgFieldStrength_n,
\qquad
\mgFieldMatrix[n] =
\begin{pmatrix}
	0 & b_3 & -b_2\\
	-b_3 & 0 & b_1\\
	b_2 & -b_1 & 0
\end{pmatrix},
\quad
\mgField[n]
=
\begin{pmatrix}
	b_1 \\ b_2 \\ b_3
\end{pmatrix}.
\end{equation*}
The filtered Boris algorithm can be written as in \eqref{eq:boris-leapfrog-b} by adding and subtracting \eqref{eq:filt-boris-a} and \eqref{eq:filt-boris-c}. 
This shows that \eqref{eq:filt-boris-a}--\eqref{eq:filt-boris-c} can be replaced by
\begin{equation}  \label{eq:filt-boris-v-Psi}
\vel[n+\frac12] - \vel[n-\frac12] = \frac{\tau}{2} \psifilter \bigl( \vel[n+\frac12] + \vel[n-\frac12] \bigr) \times\mgField[n]
+ \tau \psifilter \elField[n].
\end{equation}
Interestingly, this update can be interpreted as the exponential midpoint  rule \cite{HocO10} applied to \eqref{eq:eul-lag-filt-b}. To verify this, we write
\begin{equation} \label{eq:tanch-id}
\psi(\zeta) = \tanch (\zeta/2) 
= \frac{2}{\zeta} \, \frac{e^{\zeta/2}-e^{-\zeta/2}}{e^{\zeta/2}+e^{-\zeta/2}}
= \frac{2}{\zeta} \, \frac{e^{\zeta}-1}{e^{\zeta}+1}.
\end{equation}
Using this relation in \eqref{eq:filt-boris-v-Psi} and multiplying by $(e^{\tau \mgFieldMatrix[n]} + \Id)/2$ yields the equivalent form
\begin{equation} \label{eq:filt-boris-v}
\vel[n+1/2] = e^{\tau\mgFieldMatrix[n]} \vel[n-\frac12] + \tau \varphi_1(\tau\mgFieldMatrix[n]) \elField[n].
\end{equation}
Here, the $\varphi$-functions are given by
\begin{equation}
\varphi_k(\zeta) = \int_0^1 e^{ (1- \sigma) \zeta} \frac{\sigma^{k-1}}{(k-1)!} d\sigma,
\qquad k=0,1,\ldots
\end{equation}

The filtered version presented in \cite{HaiLW20} differs in the definition of $\vel[+]$. Instead of \eqref{eq:filt-boris-b}, it uses $\vel[+] =\exp(\tau \mgFieldMatrix[n]) \vel[-]$. Approximations to $\vel(t_{n})$ are computed as
\begin{align}
\label{eq:filt-boris-HLW-vn}
\vel[n] &= \bigl(\sinch(\tau\mgFieldMatrix[n])\bigr)^{-1} \frac{\pos[n+1]-\pos[n-1]}{2\tau}  - \tau \Upsilon(\tau\mgFieldMatrix[n]) \elField[n].
\end{align}
In the special case of a constant magnetic field $\mgField$, the integrator is equivalent to \eqref{eq:filt-boris-all}.
Lemma~\ref{lem:HWL-psipm-phi2-identity} shows that the scheme in \cite[eq.~(2.12)]{HaiLW20}\footnote{Note that a factor two is missing in the last sentence of Section~2 of this reference} is equivalent to
\begin{subequations} \label{eq:HLW-all}
\begin{align} 
	\label{eq:HLW_q}
	\pos[n+1] & = \pos[n] + \tau \varphi_1(\tau \mgFieldMatrix[])\vel[n] + \tau^2\varphi_2(\tau \mgFieldMatrix[])  \elField[n]\\
	\label{eq:voc-v-update-full=HLW}
	\vel[n+1]   &= e^{\tau \mgFieldMatrix[]}\vel[n] + \tau \varphi_1(-\tau \mgFieldMatrix[])^{-1}
	\bigl( \varphi_2(\tau \mgFieldMatrix[])\elField[n] + \varphi_2(-\tau \mgFieldMatrix[])\elField[n+1] \bigr),
\end{align}
\end{subequations}
if $\psi$ is chosen as in \eqref{eq:filter-HL}.

\section{Exponential integrators for the Euler--Lagrange equations} \label{sec:exp-int}

From now on, we consider the situation that we have a strong, constant magnetic field 
$\mgFieldStrength$, i.e., $\| \mgFieldStrength\| = 1/\delta \gg 1$ and 
a sufficiently regular, autonomous electrical field $\elField[]= \elField(\pos)$. 
In particular, we assume $\elField[]$ to be locally Lipschitz continuous.
Then, the Euler--Lagrange equations \eqref{eq:eul-lag-filt-all} can also be stated as an autonomous, semilinear system of ODEs
\begin{equation}
\dot y = \linop y + f(y),
\qquad 
y = 
\begin{pmatrix}
	\pos \\ \vel
\end{pmatrix},
\quad
\linop = 
\begin{pmatrix}
	0 & \Id \\
	0 & \mgFieldMatrix[]
\end{pmatrix},
\quad
f(y) = 
\begin{pmatrix}
	0 \\ E(\pos)
\end{pmatrix}.
\end{equation}
The variation-of-constants formula yields
\begin{equation} \label{eq:voc}
y(t+\tau) = e^{\tau \linop} y(t) + \tau \int_0^1 e^{\tau(1-\sigma)\linop} f\bigl( y(t+\tau\sigma) \bigr) d\sigma.
\end{equation}
Due to the special structure of $\linop$, we have the following representation of the matrix exponential, see also eq.~(2.13) in \cite{HaiLW20}
\begin{equation} \label{eq:expmatrix}
e^{\tau\linop} 
= 
\begin{pmatrix}
	\Id & \tau \varphi_1(\tau \mgFieldMatrix[])\\
	0 & e^{\tau \mgFieldMatrix[]}
\end{pmatrix}.
\end{equation}

By \eqref{eq:voc}, the components $\pos,\vel$ satisfy
\begin{subequations}  \label{eq:exact-q-v-voc}
\begin{align}
	\pos\bigl(t \pm \theta\bigr) 
	&= \pos(t) 
	\pm \theta \varphi_1\bigl(\pm\theta \mgFieldMatrix[]\bigr) \vel(t)
	+\theta^2 \int_0^1 (1-\sigma)
	\varphi_1\bigl(\pm \theta(1-\sigma)\mgFieldMatrix[]\bigr) 
	\elField\bigl( \pos(t\pm\theta\sigma) \bigr)
	d\sigma,	
	\label{eq:exact-q-v-voc-a}
	\\
	\vel(t\pm\theta) &= e^{\pm\theta \mgFieldMatrix[]} \vel(t) 
	\pm \theta \int_0^1 e^{\pm\theta(1-\sigma)\mgFieldMatrix[]} 
	\elField\bigl( \pos(t\pm\theta\sigma) \bigr) d\sigma.
	\label{eq:exact-q-v-voc-b}
\end{align}
\end{subequations}
The iteration \eqref{eq:filt-boris-v} is obtained by setting $t = \ts[n-1/2]$, $\theta = \tau$ in \eqref{eq:exact-q-v-voc-b}, and approximating 
\begin{equation*}
\elField\bigl( \pos(t+\tau\sigma) \bigr) \approx \elField[n] = \elField(\pos[n]),
\qquad \sigma \in [0,1].
\end{equation*}
Setting $t=\ts[n+1/2]$ and $\theta = \pm\tau/2$ in \eqref{eq:exact-q-v-voc-a} and approximating the electric field by suitable constants $\elField[n,\pm]$ yields the approximations
\begin{subequations}\label{eq:posnp1}
\begin{align}
	\label{eq:posnp1-a}
	\pos[n+1] & = \pos[n+1/2] 
	+ \tfrac{\tau}{2} \varphi_1\bigl(\tfrac{\tau}{2} \mgFieldMatrix[]\bigr) \vel[n+1/2]
	+ \tfrac{\tau^2}{4} \varphi_2\bigl(\tfrac{\tau}{2}\mgFieldMatrix[]\bigr) 
	\elField[n,+], \\
	\label{eq:posnp1-b}
	\pos[n] & = \pos[n+1/2] 
	- \tfrac{\tau}{2} \varphi_1\bigl(-\tfrac{\tau}{2} \mgFieldMatrix[]\bigr) \vel[n+1/2]
	+ \tfrac{\tau^2}{4} \varphi_2\bigl(-\tfrac{\tau}{2}\mgFieldMatrix[]\bigr) 
	\elField[n,-].
\end{align}
A natural choice would be to set
\begin{equation} \label{eq:Enpm}
	\elField[n,-] = \elField[n], \qquad
	\elField[n,+] = \elField[n+1].
\end{equation}
\end{subequations}
This makes the scheme implicit, but it should be possible to solve it by just a few steps of fixed-point iteration because we assumed that $\elField[]$ does not impose stiffness to the differential equation. With this choice, subtracting  \eqref{eq:posnp1-b} from \eqref{eq:posnp1-a} yields with \eqref{eq:id-sinch}
\begin{equation} \label{eq:voc-q-update}
\pos[n+1] = \pos[n] 
+ \tau \sinch \bigl(\tfrac{\tau}{2} \mgFieldMatrix[]\bigr)
\vel[n+1/2]
+ \tfrac{\tau^2}{4} \Bigl(
\varphi_2\bigl(\tfrac{\tau}{2}\mgFieldMatrix[]\bigr) \elField[n+1]
-  \varphi_2\bigl(-\tfrac{\tau}{2}\mgFieldMatrix[]\bigr) \elField[n] \Bigr).
\end{equation}
With the choice \eqref{eq:Enpm} and the update \eqref{eq:filt-boris-v}, we obtain a symmetric method which is exact for constant magnetic and electric fields $\mgField[]$ and $\elField$. The update \eqref{eq:filt-boris-d} used in the classical and the filtered Boris algorithm  corresponds to approximating
$\sinch\bigl(\tfrac{\tau}{2} \mgFieldMatrix[]\bigr) \approx \Id$ and neglecting the terms involving the $\varphi_2$-functions in \eqref{eq:voc-q-update}.

Directly using the variation-of-constants formula \eqref{eq:exact-q-v-voc-b} yields a different scheme than \cite[eq.~(2.12)]{HaiLW20} for the $v$-component on the full grid. A  first simple choice, which is possibly of order two, would be 
\begin{subequations}  \label{eq:voc-v-update-full-all}
		\begin{align}  
				\label{eq:voc-v-update-full=midpoint}
				\vel[n+1] &= e^{\tau \mgFieldMatrix[]} \vel[n] + \tfrac{\tau}{2}
				\varphi_1(\tau \mgFieldMatrix[]) 
				\bigl(\elField[n] + \elField[n+1]\bigr).
			\intertext{Another possibly second-order method is obtained by using an exponential trapezoidal rule to approximate the integral, cf.\ \cite[Ex.~2.6]{HocO10}}
			\label{eq:voc-v-update-full=trap}
			\vel[n+1] &= e^{\tau \mgFieldMatrix[]} \vel[n] 
			+  \tau \bigl( \bigl(      \varphi_1(\tau \mgFieldMatrix[]) -  \varphi_2(\tau \mgFieldMatrix[]) \bigr)\elField[n]
			+   \varphi_2(\tau \mgFieldMatrix[]) \elField[n+1] \bigr).
		\end{align}
	\end{subequations}
	If we do not want to work on a staggered time grid for the velocities as in \eqref{eq:filt-boris-v}, we can combine \eqref{eq:voc-q-update} with one of the following variants
	\begin{subequations} 	\label{eq:voc-vhalf-update-all}
		\begin{align}
			\vel[n+1/2] & = \tfrac12 \bigl( e^{\tfrac{\tau}{2} \mgFieldMatrix[]} \vel[n]
			+ e^{-\tfrac{\tau}{2} \mgFieldMatrix[]} \vel[n+1] \bigr),
			\label{eq:voc-vhalf-update-vdk}\\
			\sinch(\tfrac{\tau}{2} \mgFieldMatrix[])\vel[n+1/2] & = \tfrac12  \bigl( \varphi_1(\tfrac{\tau}{2} \mgFieldMatrix[]) \vel[n]
			+ \varphi_1(-\tfrac{\tau}{2} \mgFieldMatrix[]) \vel[n+1] \bigr),
			\label{eq:voc-vhalf-update-vdk-neu}\\
			\vel[n+1/2] & =  e^{\tfrac{\tau}{2} \mgFieldMatrix[]} \vel[n] + \tfrac{\tau}{2}
			\varphi_1(\tfrac{\tau}{2} \mgFieldMatrix[]) \elField[n]. 
			\label{eq:voc-vhalf-update-vexpeul}
		\end{align}
	\end{subequations}
	
	The scheme \eqref{eq:voc-q-update} combined with the variants \eqref{eq:voc-v-update-full=midpoint}, \eqref{eq:voc-v-update-full=trap}, 
	\eqref{eq:voc-vhalf-update-vdk-neu},
	and \eqref{eq:voc-vhalf-update-vexpeul}
	are exact for constant magnetic and electric fields $\mgField[]$ and $\elField$, respectively.
	Moreover, \eqref{eq:id-phi1-phi2} shows that \eqref{eq:voc-v-update-full=HLW} is exact if $\elField$ is constant.
	However, the others fail to be exact in this situation, because $v_{n+1/2}$ is replaced by an approximation which does not have this property. For \eqref{eq:voc-vhalf-update-vdk}, exactness is only satisfied if $\elField = 0$.

	\section{Error analysis: recursion and stability} \label{sec:error}

	Our aim is an analysis of all different variants of filtered Boris-type algorithms in a uniform way and to characterize filter functions which lead to beneficial properties. Thus, we consider a generic exponential integrator \eqref{eq:gen-expint}. We are interested in the situation that
	\begin{equation}  \label{eq:tauB-lower-bound}
		\tau \normempty{\mgField} \geq \vartheta_0 
	\end{equation}
	for some given $\vartheta_0 \in (0,1)$. Otherwise, a standard, nonstiff error analysis yields order two errors for the position and velocity errors of the Boris algorithm without filters.
	
	For the method \eqref{eq:HLW-all} and some of our results, a nonresonance condition is required, so that one cannot take arbitrary stepsizes $\tau$.
	\begin{assumption}[Nonresonance condition] \label{ass:nonres}
		Let $\nonres \in (0,\pi)$ be fixed. The time stepsize $\tau$ satisfies the nonresonance condition 
		\begin{equation} \label{eq:nonres}
			{\Bigl|}\tau \normempty{\mgField[]} - 2 \pi k{\Bigr|} \geq \nonres \quad \text{for all} \quad k \in \mathbb{Z}
			\setminus\{0\}.
		\end{equation}
	\end{assumption}

	The notation $\mathcal{O}(\tau^{j})$ always means that we consider $\tau\to 0$ with constants depending on $T$, $K_{\pos}, K_{\vel}$ and bounds on $\elField$ and its derivatives in a neighborhood of $K_{\pos}$, but independent of $\tau$ and $\normempty{\mgField[]}$.

	\subsection{Definition of defects and error recursion} \label{subsec:error-recur}

	In the following, we use the short notation for the exact solution
	\begin{align*}
		\telField (t)& = \elField(\pos(t)),
		&
		\telFieldJac{1}(t) & = \tfrac{d}{dt}\telField(t) , 
		&
		\telFieldJac[n]{1} &= \tfrac{d}{dt}\elField(\tpos[n]\bigr),
		& \bdelFieldJac[n]{1} & = \max_{0 \leq t \leq \tn[n]} \normempty{\telFieldJac[]{1}(t)} \\
		\tpos[n] &= \pos(t_n),
		&
		\tvel[n] &= \vel(t_{n}),
		&
		\telField[n] &= \elField(\tpos[n]). &&
	\end{align*}

	We start by defining defects. Inserting the exact solution \eqref{eq:exact-q-v-voc} of \eqref{eq:eul-lag-filt-all} into the numerical scheme \eqref{eq:gen-expint} yields 
	defects $\defectPos[n+1]$ and $\defectVel[n+1]$ given by
	\begin{subequations}  \label{eq:gen-expint-defects}
		\begin{align}  
			\label{eq:gen-expint-defects-b}
			\tpos[n+1] &= \tpos[n] 
			+ \tfrac{\tau}{2} \bigl( \psifilter_- \tvel[n]+  \psifilter_+ \tvel[n+1] \bigr)
			+ \tfrac{\tau^2}{4} \bigl(
			\chifilter_-\telField[n]
			+  \chifilter_+ \telField[n+1] \bigr) + \tau \defectPos[n+1],\\
			\tvel[n+1] &= e^{\tau \mgFieldMatrix[]} \tvel[n] + \tfrac{\tau}{2}
			\bigl( \phifilter_- \telField[n]  + \phifilter_+ \telField[n+1] \bigr)
			+ \tau \defectVel[n+1].
			\label{eq:gen-expint-defects-a}
		\end{align}
	\end{subequations}
	We denote errors and defects by
	\begin{equation} \label{eq:errors-combined}
		\error[n] = 
		\begin{pmatrix}
			\errorPos[n]\\
			\errorVel[n]
		\end{pmatrix}
		= 
		\begin{pmatrix}
			\errorPos*[n]\\
			\errorVel*[n]
		\end{pmatrix}
		,
		\quad
		\defect[n] =
		\begin{pmatrix}
			\defectPos[n]\\
			\defectVel[n]
		\end{pmatrix} ,
		\quad
		\errorelFieldVec[n] =
		\begin{pmatrix}
			\errorelField[n]	\\
			\errorelField[n+1]
		\end{pmatrix},
		\quad
		\errorelField[n] = \errorelField*[n].
	\end{equation}
	Subtracting \eqref{eq:gen-expint} from \eqref{eq:gen-expint-defects} we obtain
	the error recursion 
	\begin{subequations} \label{eq:error-recur-all}
		\begin{equation} \label{eq:error-recur}
			\systemMatrix{-}
			\error[n+1] = 
			\systemMatrix{+}
			\error[n]
			+ 
			\tau
			\auxMatrix 
			\errorelFieldVec[n]
			+
			\tau \defect[n+1]
		\end{equation}
		with
		\begin{equation} \label{eq:error-recur-mats}
			\systemMatrix{-} = 
			\begin{pmatrix}
				\Id & -\tfrac{\tau}{2}  \psifilter_+\\
				0 & \Id
			\end{pmatrix},
			\qquad
			\systemMatrix{+} = 
			\begin{pmatrix}
				\Id & \tfrac{\tau}{2}  \psifilter_-\\
				0 & e^{\tau \mgFieldMatrix[]}
			\end{pmatrix},
			\qquad
			\auxMatrix =
			\tfrac12
			\begin{pmatrix}
				\frac{\tau}{2} \chifilter_{-} & \tfrac{\tau}{2} \chifilter_+\\
				\phifilter_{-} & \phifilter_{+}
			\end{pmatrix}.
		\end{equation}
	\end{subequations}
	Solving this recursion and writing $\systemMatrix{} = \systemMatrix{-}^{-1} \systemMatrix{+}$ yields
	\begin{equation} \label{eq:voc-discrete}
		\error[n] = \systemMatrix{}^n \error[0]
		+ \tau \sum_{j=0}^{n-1}  \systemMatrix{}^{n-j-1} \systemMatrix{-}^{-1}
		\auxMatrix \errorelFieldVec[j] + \defecterr[n]{}, \qquad
		\defecterr[n]{} = \tau \sum_{j=0}^{n-1}  \systemMatrix{}^{n-j-1} \systemMatrix{-}^{-1} \defect[j+1].
	\end{equation}
	\begin{lemma}  \label{lem:systemmatrix-power}
		The matrix $\systemMatrix{} = \systemMatrix{-}^{-1} \systemMatrix{+}$ satisfies
		\begin{subequations}
			\begin{equation} \label{eq:systemmat}
				\systemMatrix[n]{} =
				\begin{pmatrix}
					\Id &   \tau \upsilonfilter_n
					\\
					0 & e^{n\tau \mgFieldMatrix[]}
				\end{pmatrix}	,
				\qquad
				\systemMatrix[n]{}\systemMatrix{-}^{-1}
				=
				\begin{pmatrix}
					\Id & \tau(\tfrac12 \psifilter_+ +\upsilonfilter_n  )\\
					0 & e^{n\tau \mgFieldMatrix[]}
				\end{pmatrix}
				\qquad
				n=1,2,\ldots,
			\end{equation}
			where 
			\begin{equation} \label{eq:upsilonfilter}
				\upsilonfilter_n = \Upsilon_n(\tau \mgFieldMatrix[]) ,
				\quad
				\Upsilon_n(z)= \tfrac{1}{2}( \psi_-(z) + \psi_+(z)e^z )\funexpGauss_n(z)  , \quad
				\funexpGauss_n(z) = \sum_{j=0}^{n-1} e^{jz} = \frac{e^{nz}-1}{e^z-1}.
			\end{equation}
		\end{subequations}
		
	\end{lemma}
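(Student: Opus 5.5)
We first compute $\systemMatrix{}$ explicitly and then argue by induction on $n$. Since $\systemMatrix{-}$ is block upper triangular with identity diagonal blocks, its inverse is
\begin{equation*}
\systemMatrix{-}^{-1} =
\begin{pmatrix}
\Id & \tfrac{\tau}{2}\psifilter_+\\
0 & \Id
\end{pmatrix}.
\end{equation*}
Multiplying by $\systemMatrix{+}$ gives
\begin{equation*}
\systemMatrix{} =
\begin{pmatrix}
\Id & \tfrac{\tau}{2}\bigl(\psifilter_- + \psifilter_+ e^{\tau\mgFieldMatrix[]}\bigr)\\
0 & e^{\tau\mgFieldMatrix[]}
\end{pmatrix}.
\end{equation*}
All blocks are functions of the single matrix $\tau\mgFieldMatrix[]$, so they commute with one another. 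This commutativity is what lets the scalar identities below carry over to the matrix setting. For $n=1$ we have $\funexpGauss_1 = 1$, so the upper right block is exactly $\tau\upsilonfilter_1$.

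For the induction step, write $\systemMatrix[n+1]{} = \systemMatrix[n]{}\systemMatrix{}$. The diagonal blocks are simply $\Id$ and $e^{(n+1)\tau\mgFieldMatrix[]}$. The upper right block equals
\begin{equation*}
\tau\Bigl(\upsilonfilter_1 + \upsilonfilter_n e^{\tau\mgFieldMatrix[]}\Bigr).
\end{equation*}
It therefore suffices to verify the scalar identity $\Upsilon_1(z) + \Upsilon_n(z)e^z = \Upsilon_{n+1}(z)$. Factoring out $\tfrac12(\psi_-(z)+\psi_+(z)e^z)$, this reduces to $1 + e^z\funexpGauss_n(z) = \funexpGauss_{n+1}(z)$, which is immediate from the geometric sum. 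Equivalently, one can write $\systemMatrix[n]{} = \systemMatrix[n-1]{}\systemMatrix{}$ and use $\funexpGauss_{n-1}+e^{(n-1)z} = \funexpGauss_n$; either ordering works because the blocks commute. The closed form $\funexpGauss_n(z) = (e^{nz}-1)/(e^z-1)$ is only meaningful when $e^z \neq 1$. Since the lemma is applied to $z = \tau\mgFieldMatrix[]$, we read $\funexpGauss_n$ as the finite sum, which is an entire function, so no resonance issue arises here.

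Finally, the second formula follows by right-multiplying $\systemMatrix[n]{}$ by $\systemMatrix{-}^{-1}$. The upper right block becomes $\tfrac{\tau}{2}\psifilter_+ + \tau\upsilonfilter_n$, since the identity block of $\systemMatrix[n]{}$ multiplies $\tfrac{\tau}{2}\psifilter_+$ and the other block of $\systemMatrix{-}^{-1}$ is $\Id$. The diagonal blocks are unchanged.

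There is no real obstacle in this argument. The only points needing care are to keep the order of matrix products consistent and to justify the commutativity of the filter matrices, which holds because they are all functions of $\tau\mgFieldMatrix[]$.
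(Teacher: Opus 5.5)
Your proposal is correct and follows essentially the paper's proof. Both arguments read off $\mathbf{R}$ from the block triangular structure with upper right block $\tau\Upsilon_1(\tau\mathbf{B})$, obtain $\mathbf{R}^n$ via the geometric sum $\varrho_n$ (the paper writes the power directly, you use the equivalent induction), and get the second formula by right-multiplying with $\mathbf{R}_-^{-1}$. There is one harmless slip in your aside: for $\mathbf{R}^{n-1}\mathbf{R}$ the needed identity is $1+e^{z}\varrho_{n-1}(z)=\varrho_n(z)$, whereas $\varrho_{n-1}(z)+e^{(n-1)z}=\varrho_n(z)$ belongs to $\mathbf{R}\,\mathbf{R}^{n-1}$; since the two products coincide, nothing breaks.
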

	\begin{proof}
		The block triangular structure of $\systemMatrix{}$ implies
		\begin{equation*}
			\systemMatrix{} =
			\begin{pmatrix}
				\Id &  \tau \Upsilon_1(\tau \mgFieldMatrix[])\\
				0 & e^{\tau \mgFieldMatrix[]}
			\end{pmatrix},
			\qquad
			\systemMatrix[n]{} = 
			\begin{pmatrix}
				\Id &  \tau \Upsilon_1(\tau \mgFieldMatrix[])
				\sum\limits_{j=0}^{n-1} e^{j\tau \mgFieldMatrix[]}\\
				0 & e^{n\tau \mgFieldMatrix[]}
			\end{pmatrix}.
		\end{equation*}
		As $\Upsilon_n(z)=\Upsilon_1(z)\funexpGauss_n(z)$, the
		claim follows from the definition of $\funexpGauss_n$ and $\funexpGauss_1 = 1$.
	\end{proof}
	
	\subsection{Stability} \label{subsec:stability}

	In order to study the stability, we investigate the function $\Upsilon_n$ defined in \eqref{eq:upsilonfilter} for the different choices of $\psi_{\pm}$ from Section~\ref{sec:exp-int}.
	Since $\Upsilon_n = \Upsilon_1 \funexpGauss_n$, we start with $\Upsilon_1$ and define the auxiliary function 
	\begin{equation}  \label{eq:Psi0-def}
		\Psi_0(z) 
		= \sinch(\tfrac{z}{2}) -  \Upsilon_1(z)  e^{ -\tfrac{z}{2}}.
	\end{equation}
	
	\begin{lemma}  \label{lem:psi-psi+exp}
		Let $\Upsilon_n$ be defined in \eqref{eq:upsilonfilter} and $\Psi_0$ in \eqref{eq:Psi0-def}, respectively. 
		If 	$\Psi_0= 0$, e.g., 
		for the filter functions of the schemes
			collected in Table~$\ref{tab:psichipmPsi01}$, we have $\Upsilon_1(z)= \varphi_1(z)$.
		\end{lemma}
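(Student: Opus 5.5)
The identity follows from the definition of $\Psi_0$ in \eqref{eq:Psi0-def} together with an elementary identity linking $\sinch$ and $\varphi_1$. If $\Psi_0=0$, then
\begin{equation*}
\Upsilon_1(z)\, e^{-z/2} = \sinch\bigl(\tfrac{z}{2}\bigr),
\qquad\text{hence}\qquad
\Upsilon_1(z) = e^{z/2}\sinch\bigl(\tfrac{z}{2}\bigr).
\end{equation*}
It therefore suffices to show
\begin{equation*}
e^{z/2}\sinch\bigl(\tfrac{z}{2}\bigr)=\varphi_1(z).
\end{equation*}
This is presumably the content of \eqref{eq:id-sinch}. It can be checked directly from $\sinch(w)=\sinh(w)/w$:
\begin{equation*}
e^{z/2}\,\frac{e^{z/2}-e^{-z/2}}{z} = \frac{e^z-1}{z} = \varphi_1(z).
\end{equation*}
At $z=0$ both sides equal $1$ by continuity, since both are entire functions. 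Because all filters are evaluated at the matrix $\tau\mgFieldMatrix[]$ via the holomorphic functional calculus, this scalar identity transfers immediately to the matrix functions.

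For the second part of the claim ("e.g., for the schemes in Table~\ref{tab:psichipmPsi01}"), I will verify $\Psi_0=0$ scheme by scheme. Equivalently, I will check that
\begin{equation*}
\tfrac12\bigl(\psi_-(z)+\psi_+(z)e^z\bigr)=\varphi_1(z).
\end{equation*}
\begin{itemize}
\item For \eqref{eq:HLW_q}, written in the form \eqref{eq:gen-expint-pos}, we have $\psi_+=0$ and $\psi_-=2\varphi_1$, so the check is immediate.
\item For the staggered variants, where the position update \eqref{eq:voc-q-update} is combined with \eqref{eq:voc-vhalf-update-vdk-neu}, eliminating $\vel[n+1/2]$ gives $\psi_-(z)=\varphi_1(z/2)$ and $\psi_+(z)=\varphi_1(-z/2)$. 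Using $e^{z}\varphi_1(-z/2)=e^{z/2}\varphi_1(z/2)$, this yields
\begin{equation*}
\tfrac12\varphi_1\bigl(\tfrac z2\bigr)\bigl(1+e^{z/2}\bigr)=\varphi_1(z),
\end{equation*}
which follows from $e^z-1=(e^{z/2}-1)(e^{z/2}+1)$.
\item Other table entries are handled the same way, by reducing them to identities among exponentials.
\end{itemize}

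The main obstacle is only bookkeeping: extracting $\psi_\pm$ correctly for each tabulated scheme. In particular, for schemes that use $\vel[n+1]$ implicitly, the terms must be rearranged into the form \eqref{eq:gen-expint-pos} before reading off $\psi_\pm$.
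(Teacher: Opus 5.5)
Your proposal is correct and follows essentially the paper's route: the paper also uses $\sinch(\tfrac{z}{2}) = \varphi_1(z)e^{-z/2}$, which is identity \eqref{eq:id-sinch-exp} rather than \eqref{eq:id-sinch}, to write $\Psi_0(z) = (\varphi_1(z)-\Upsilon_1(z))e^{-z/2}$. Your scheme-by-scheme check of $\tfrac12(\psi_-+\psi_+e^{z})=\varphi_1$ for the table entries only makes explicit what the paper asserts without computation.
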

		\begin{proof}
			Using \eqref{eq:id-sinch-exp}, we have 
			\begin{align*}
				\Psi_0(z) 
				= \bigl( \varphi_1(z) - \Upsilon_1(z) \bigr) e^{-z/2}.
			\end{align*}
			Hence $\Psi_0= 0 $ is equivalent to $\Upsilon_1= \varphi_1$ and the claim follows for \eqref{eq:HLW-all}, 
			\eqref{eq:voc-vhalf-update-vdk}, \eqref{eq:voc-vhalf-update-vdk-neu}, and 
			\eqref{eq:voc-vhalf-update-vexpeul}.
		\end{proof}
		
		\begin{theorem} \label{thm:power-bound}
			For the filter functions of the schemes in Table~$\ref{tab:psichipmPsi01}$,
			the power bounds $\normempty{\systemMatrix[n]{}} \leq 1+ \tn[n]$ and $\normempty{\systemMatrix[n]{}\systemMatrix{-}^{-1}} \leq 1+ \tn[n+1/2]$ hold for $n=0,1,2,\ldots$. Moreover, for all variants in \eqref{eq:voc-vhalf-update-all}
			we have $\normempty{\auxMatrix} \leq C_{\auxMatrix}$. For $\eqref{eq:HLW-all}$, this bound only holds if \eqref{eq:nonres} is satisfied.
			\end{theorem}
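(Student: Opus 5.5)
We use Lemma~\ref{lem:systemmatrix-power} together with Lemma~\ref{lem:psi-psi+exp}. For all schemes in Table~\ref{tab:psichipmPsi01} we have $\Psi_0=0$, hence $\Upsilon_1=\varphi_1$. Therefore
\begin{equation*}
\Upsilon_n(z)=\varphi_1(z)\funexpGauss_n(z)=\frac{e^z-1}{z}\cdot\frac{e^{nz}-1}{e^z-1}=\frac{e^{nz}-1}{z}=n\varphi_1(nz).
\end{equation*}
This is an entire function, and the possible zeros of $e^z-1$ at $z=2\pi i k$ cancel exactly. Consequently $\tau\upsilonfilter_n=\tn[n]\varphi_1(\tn[n]\mgFieldMatrix[])$ for every $n$, and no nonresonance condition is needed.

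Next we bound the blocks. The matrix $\mgFieldMatrix[]$ is real skew-symmetric, hence normal with eigenvalues $0,\pm i\normempty{\mgField[]}$. For normal matrices the spectral norm of $f(\tau\mgFieldMatrix[])$ is the maximum of $\abs{f}$ on the spectrum. On the imaginary axis $\abs{e^{i y}}=1$ and $\abs{\varphi_1(iy)}=\abs{\int_0^1 e^{(1-\sigma)iy}d\sigma}\le1$. This gives $\normempty{e^{n\tau\mgFieldMatrix[]}}=1$ and $\normempty{\tau\upsilonfilter_n}\le\tn[n]$.

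To bound the norm of the block matrix, split it as $\systemMatrix[n]{}=\operatorname{diag}(\Id,e^{n\tau\mgFieldMatrix[]})+N$, where $N$ carries only the off-diagonal block. The diagonal part is orthogonal, so the triangle inequality gives $\normempty{\systemMatrix[n]{}}\le1+\tn[n]$. For $\systemMatrix[n]{}\systemMatrix{-}^{-1}$ the off-diagonal block is $\tau(\tfrac12\psifilter_++\upsilonfilter_n)$. Here we need $\abs{\psi_+(iy)}\le1$ on the imaginary axis for each scheme in the table; these $\psi_+$ are $\varphi$-type or $\sinch$-type combinations. For example, $\sinch(iy/2)=\sinc(y/2)$ and $\varphi_1(-iy/2)$ are both bounded by one. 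The off-diagonal block is then bounded by $\tfrac{\tau}{2}+\tn[n]=\tn[n+1/2]$, which is the stated bound. Checking $\abs{\psi_+}\le1$ on $i\mathbb{R}$ case by case from the table is routine.

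For $\normempty{\auxMatrix}$, the same normality argument reduces the claim to bounding $\abs{\phi_\pm(iy)}$ and $\abs{\tfrac{\tau}{2}\chi_\pm(iy)}$ uniformly in $y$. The $\tau$ factor is harmless for $\tau\le1$. For the variants \eqref{eq:voc-vhalf-update-all}, $\phi_\pm$ and $\chi_\pm$ are built from $\varphi_1,\varphi_2$ and $e^{\pm z/2}$ evaluated at $\pm z$ or $\pm z/2$. Each $\varphi_k$ satisfies $\abs{\varphi_k(iy)}\le1/k!$ from its integral representation, which gives a constant $C_{\auxMatrix}$.

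The main obstacle is \eqref{eq:HLW-all}, where $\phi_\pm(z)=2\varphi_1(-z)^{-1}\varphi_2(\pm z)$. Since $\varphi_1(-iy)=(1-e^{-iy})/(iy)$ vanishes exactly at $y=2\pi k$ with $k\neq0$, this filter is unbounded near resonances. Under \eqref{eq:nonres}, the distance of $\tau\normempty{\mgField[]}$ to $2\pi k$ is at least $\nonres$. Using $\abs{1-e^{-iy}}=2\abs{\sin(y/2)}\ge c(\nonres)>0$, we get $\abs{\varphi_1(-iy)}^{-1}\le\abs{y}/c(\nonres)$. Combined with $\abs{\varphi_2(iy)}=\mathcal{O}(1/\abs{y})$ for large $\abs{y}$ and boundedness for small $\abs{y}$, the product stays bounded; for small $\abs{y}$, $\varphi_1(-iy)$ is near $1$. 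Without \eqref{eq:nonres}, letting $y\to2\pi k$ shows that the bound fails.
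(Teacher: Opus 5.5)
Your proposal is correct and follows the paper's proof. It uses the same ingredients: the triangle-inequality bound for the block upper-triangular matrix with orthogonal diagonal, the identity $\tau\Upsilon_n(z)=\tn[n]\varphi_1(nz)$ obtained from $\Upsilon_1=\varphi_1$, and $\normempty{\psifilter_+}\le 1$; the bound on $\auxMatrix$ is likewise reduced, via normality of $\mgFieldMatrix[]$, to scalar bounds of the filters on $\{0,\pm i\tau\normempty{\mgField[]}\}$. Your explicit treatment of \eqref{eq:HLW-all}, bounding $1/\varphi_1(-z)$ through \eqref{eq:nonres}, supplies detail the paper only asserts; for the converse, add the one-line check $\varphi_2(\pm 2\pi i k)=\mp 1/(2\pi i k)\neq 0$, which shows the numerators of $\phi_\pm$ do not cancel the zeros of $\varphi_1(-z)$.
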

			\begin{proof}
				For an arbitrary matrix $\upsilonfilter$ we have the bound
				\begin{equation}  \label{eq:2x2-norm}
					\normempty{ 	
						\begin{pmatrix}
							\Id &  \tau \upsilonfilter \\
							0 & e^{n\tau \mgFieldMatrix[]}
						\end{pmatrix}
					}
					\leq 1 + \tau \normempty{\upsilonfilter}.
				\end{equation}
				Note that by \eqref{eq:upsilonfilter} and Lemma~\ref{lem:psi-psi+exp}, we can write
				\begin{equation} \label{eq:funexpGauss-phi1}
					\tau \Upsilon_n(z) =
					\tau \Upsilon_1 (z)\funexpGauss_n(z)  
					= \tau \varphi_1 (z)\frac{e^{nz}-1}{e^z-1}
					= t_n  \varphi_1(nz),
				\end{equation}
				so that
				\begin{equation} \label{eq:upsfilterbound}
					\normempty{\tau \upsilonfilter_n} = 	\max_{z \in i\mathbb{R}}\;\abs{\tau \Upsilon_n(z) }  \leq  t_n.
				\end{equation}
				With \eqref{eq:2x2-norm}, this proves $\normempty{\systemMatrix[n]{}} \leq 1+ \tn[n]$. The bound on $\normempty{\systemMatrix[n]{}\systemMatrix{-}^{-1}}$ follows from $\normempty{\psifilter_+}\leq 1$ and the bound on $\normempty{\auxMatrix}$ follows directly from $\mgFieldMatrix[]$ being skew-symmetric.
			\end{proof}

			Since $\mgFieldMatrix[]$ is skew-symmetric, there exists a unitary matrix $U\in \mathbb{C}^{3\times 3}$ of eigenvectors such that
			\begin{align}  \label{eq:mgmatrix-diag}
				\mgFieldMatrix[] = U\begin{pmatrix}
					\mgFieldMatrix[\perp] & \\  & 0
				\end{pmatrix}U^*, \qquad \mgFieldMatrix[\perp] = \begin{pmatrix}
					i \normempty{\mgField[]} & 0\\ 0 & -i \normempty{\mgField[]}
				\end{pmatrix}, \qquad U = \begin{pmatrix}
					U_\perp & U_\|
				\end{pmatrix},
			\end{align}
			with $U_\| = \mgField[]/\normempty{\mgField[]}$. The projector 
			onto the kernel of $\mgFieldMatrix[]$, i.e., the projection onto $\mathrm{span}(\mgField[])$, will be denoted by $\propar=U_\parallel U_\parallel^*$ and the corresponding orthogonal projection  by $\properp := \Id - \propar$, respectively. 
			Then, for a function $\phi$ defined on the spectrum of $\tau \mgFieldMatrix[]$, it holds
			\begin{equation} \label{eq:matfun-project}
				\phi(\tau \mgFieldMatrix[]) \properp = \properp\phi(\tau \mgFieldMatrix[])  = U_\perp \phi(\mgFieldMatrix[\perp]) U_\perp^*.
			\end{equation}
			
			If the nonresonance condition \eqref{eq:nonres} is satisfied, we can refine the stability bound. 
			\begin{lemma}  \label{lem:rhon-bound}
				If $\tau$ is chosen such that the nonresonance Assumption~$\ref{ass:nonres}$ holds for some $\vartheta \in (0,\pi)$ and \eqref{eq:tauB-lower-bound} is satisfied, then   $\funexpGauss_n$ defined in \eqref{eq:upsilonfilter} satisfies
				\begin{equation} \label{eq:funexpgauss-bound}
					\normempty{  \properp \funexpGauss_n(\tau\mgFieldMatrix[])} 
					= \normempty{   \funexpGauss_n(\tau\mgFieldMatrix[])\properp} 
					\leq \normempty{   \funexpGauss_n(\tau\mgFieldMatrix[\perp])} 
					\leq \frac{1}{  \abs{\sin \tfrac{\min\{\vartheta,\vartheta_0\}}{2}}}, 
					\qquad
					n \in \mathbb{N}		.
				\end{equation}
			\end{lemma}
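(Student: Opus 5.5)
The plan is to reduce everything to a scalar estimate on the two nonzero eigenvalues $\pm i\tau\normempty{\mgField[]}$ by diagonalizing with \eqref{eq:mgmatrix-diag}. First, by \eqref{eq:matfun-project} applied to $\phi=\funexpGauss_n$, we have
\begin{equation*}
\properp \funexpGauss_n(\tau\mgFieldMatrix[]) = \funexpGauss_n(\tau\mgFieldMatrix[])\properp = U_\perp \funexpGauss_n(\tau\mgFieldMatrix[\perp]) U_\perp^*.
\end{equation*}
This gives the equality of the first two norms. Because $U_\perp$ has orthonormal columns, the norm of the right-hand side is bounded by (in fact equals) $\normempty{\funexpGauss_n(\tau\mgFieldMatrix[\perp])}$.

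Since $\mgFieldMatrix[\perp]$ is diagonal, $\funexpGauss_n(\tau\mgFieldMatrix[\perp])$ is diagonal with entries $\funexpGauss_n(\pm i\omega)$, where $\omega=\tau\normempty{\mgField[]}$. Its norm is therefore $\max_\pm \abs{\funexpGauss_n(\pm i\omega)}$. To evaluate this, I will use the closed form in \eqref{eq:upsilonfilter}, which is valid whenever $e^{\pm i\omega}\neq 1$:
\begin{equation*}
\abs{\funexpGauss_n(\pm i\omega)} = \frac{\abs{e^{\pm in\omega}-1}}{\abs{e^{\pm i\omega}-1}} = \frac{\abs{\sin(n\omega/2)}}{\abs{\sin(\omega/2)}} \le \frac{1}{\abs{\sin(\omega/2)}}.
\end{equation*}
Here I use $\abs{e^{ix}-1}=2\abs{\sin(x/2)}$ and then bound the numerator by one.

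The remaining step is a lower bound on $\abs{\sin(\omega/2)}$. The value $\abs{\sin(\omega/2)}$ equals $\sin(d/2)$, where $d=\min_{k\in\mathbb{Z}}\abs{\omega-2\pi k}\in[0,\pi]$ is the distance from $\omega$ to $2\pi\mathbb{Z}$. Assumption~\ref{ass:nonres} gives $\abs{\omega-2\pi k}\ge\vartheta$ for every $k\neq0$. The case $k=0$ is not covered by the assumption, but \eqref{eq:tauB-lower-bound} supplies it, since $\abs{\omega-0}=\omega\ge\vartheta_0$. Hence $d\ge\min\{\vartheta,\vartheta_0\}$. Because $\sin(x/2)$ is increasing on $[0,\pi]$ and both $\vartheta,\vartheta_0$ lie in $(0,\pi)$, this yields $\abs{\sin(\omega/2)}\ge\sin(\min\{\vartheta,\vartheta_0\}/2)>0$. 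This also justifies the use of the closed form for $\funexpGauss_n$, since $e^{\pm i\omega}\neq1$.

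The only delicate point is this case split in the distance argument, and it is easily resolved: the $k=0$ resonance, which the nonresonance condition excludes, must be handled separately by the lower bound \eqref{eq:tauB-lower-bound}. This is precisely why the minimum of $\vartheta$ and $\vartheta_0$ appears in the final constant.
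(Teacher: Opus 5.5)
Your proposal is correct and follows the paper's proof. The paper bounds $\abs{(e^{ix}-1)^{-1}} = 1/(2\abs{\sin(x/2)})$ for $x=\pm\tau\normempty{\mgField[]}$ and then uses $\abs{e^{inx}-1}\le 2$ in the closed form of $\funexpGauss_n$; you additionally spell out the reduction via \eqref{eq:matfun-project} and the fact that the $k=0$ case is covered by \eqref{eq:tauB-lower-bound}, both of which the paper leaves implicit.
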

			\begin{proof}
				For $x = \pm\tau\normempty{\mgField[]}$ it holds
				\begin{equation*}
					\abs{ (e^{ix} - 1)^{-1} } = \frac{1}{ 2\abs{\sin(\tfrac12 x)}}
					\leq \frac{1}{2  \abs{\sin \tfrac{\min\{\vartheta,\vartheta_0\}}{2}}}.
				\end{equation*}
				The bound \eqref{eq:funexpgauss-bound} follows from the definition of $\funexpGauss_n$.
			\end{proof}

			\section{Error bounds of order one} \label{sec:order-1}
			
			In this section, we characterize filter functions in the general scheme \eqref{eq:gen-expint} which lead to error bounds of order one with constants independent of $\normempty{\mgField}$.
			Our analysis requires the following assumption in the whole section without further mention.
			\begin{assumption}\label{ass:efield-assump-C1}
				The trajectories of $\pos, \vel$ remain in compact sets $K_{\pos}, K_{\vel}$, independent of $\normempty{\mgField[]}$, and $\elField \in C^1$ in a neighborhood of $K_{\pos}$.
			\end{assumption}

			We start with deriving explicit expressions for the dominant term in the defects in \eqref{eq:gen-expint-defects}.
			\begin{lemma} \label{lem:defects-q}
				The defects \eqref{eq:gen-expint-defects-b} for the positions satisfy
				\begin{subequations} 	\label{eq:defectPos-all}
					\begin{align} 	\label{eq:defectPos}
						\defectPos[n+1] = \Psi_0(\tau\mgFieldMatrix[]) \tvel[n+1/2] 
						+ \tau \Psi_1(\tau\mgFieldMatrix[]) \telField[n+1/2] + \mathcal{O}(\tau^2),
					\end{align}
					with $\Psi_0$ defined in \eqref{eq:Psi0-def} and 
					\begin{align}\label{eq:Psi1-def}
						\Psi_1(z)  &=  \tfrac12 \psi_{1,{\sinh}}(\tfrac{z}{2}) 
						- \tfrac14 \bigl( \psi_{+}(z) \varphi_1(\tfrac{z}{2})
						-  \psi_{-}(z) \varphi_1(-\tfrac{z}{2})\bigr)
						-	\tfrac14 \bigl( \chi_-(z)+  \chi_+(z)\bigr),
					\end{align}
					where $\psi_{1,{\sinh}}(z)= (\sinh(z)-z)/z^2$. 
				\end{subequations}
			\end{lemma}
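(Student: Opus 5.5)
We expand everything around the midpoint $\ts[n+1/2]$ using the variation-of-constants representation \eqref{eq:exact-q-v-voc} with $t=\ts[n+1/2]$ and $\theta=\tau/2$, and compare the result with the definition \eqref{eq:gen-expint-defects-b} of $\defectPos[n+1]$. Throughout we write $z=\tau\mgFieldMatrix[]$. Two facts are used repeatedly: $\mgFieldMatrix[]$ is skew-symmetric, and all matrix functions involved ($e^{\pm z/2}$, $\varphi_1(\pm z/2)$, $\varphi_2(\pm z/2)$, $\psi_\pm(z)$, $\chi_\pm(z)$) are evaluated on $i\mathbb{R}$. Hence their norms are bounded independently of $\normempty{\mgField[]}$ (for the filters this is the assumption that they are bounded there). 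The remainders $\mathcal{O}(\tau^2)$ will therefore have constants depending only on $K_\pos$, $K_\vel$ and $\elField\in C^1$.

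\emph{Step 1 (exact increments).} We subtract the two cases ($\pm$) of \eqref{eq:exact-q-v-voc-a}. The velocity terms combine via $\varphi_1(z/2)+\varphi_1(-z/2)=2\sinch(z/2)$, cf.\ \eqref{eq:id-sinch}. In the integral terms we freeze the field, $\elField(\pos(\ts[n+1/2]\pm\tfrac{\tau}{2}\sigma))=\telField[n+1/2]+\mathcal{O}(\tau)$; this holds because $\dot\pos=\vel$ is bounded on $K_\vel$ and $\elField\in C^1$. The identity $\int_0^1(1-\sigma)\varphi_1((1-\sigma)w)\,d\sigma=\varphi_2(w)$ then gives
\[
\tpos[n+1]-\tpos[n]=\tau\sinch(\tfrac z2)\tvel[n+1/2]+\tfrac{\tau^2}{4}\bigl(\varphi_2(\tfrac z2)+\varphi_2(-\tfrac z2)\bigr)\telField[n+1/2]+\mathcal{O}(\tau^3).
\]
The frozen-field error enters as $\tau^2\cdot\mathcal{O}(\tau)$, so it is $\mathcal{O}(\tau^3)$. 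Applying \eqref{eq:exact-q-v-voc-b} in the same way yields
\[
\tvel[n+1]=e^{z/2}\tvel[n+1/2]+\tfrac\tau2\varphi_1(\tfrac z2)\telField[n+1/2]+\mathcal{O}(\tau^2),
\qquad
\tvel[n]=e^{-z/2}\tvel[n+1/2]-\tfrac\tau2\varphi_1(-\tfrac z2)\telField[n+1/2]+\mathcal{O}(\tau^2).
\]
Finally, $\telField[n],\telField[n+1]=\telField[n+1/2]+\mathcal{O}(\tau)$.

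\emph{Step 2 (collect terms).} We insert these expansions into $\tau\defectPos[n+1]=\tpos[n+1]-\tpos[n]-\tfrac\tau2(\psifilter_-\tvel[n]+\psifilter_+\tvel[n+1])-\tfrac{\tau^2}{4}(\chifilter_-\telField[n]+\chifilter_+\telField[n+1])$. The coefficient of $\tvel[n+1/2]$ is $\tau\bigl(\sinch(z/2)-\tfrac12(\psi_-(z)e^{-z/2}+\psi_+(z)e^{z/2})\bigr)$. Since $\Upsilon_1(z)=\tfrac12(\psi_-+\psi_+e^z)$, this equals $\tau\bigl(\sinch(z/2)-\Upsilon_1(z)e^{-z/2}\bigr)=\tau\Psi_0(z)$ by \eqref{eq:Psi0-def}. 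The coefficient of $\telField[n+1/2]$ is
\[
\tau^2\Bigl[\tfrac14\bigl(\varphi_2(\tfrac z2)+\varphi_2(-\tfrac z2)\bigr)-\tfrac14\bigl(\psi_+\varphi_1(\tfrac z2)-\psi_-\varphi_1(-\tfrac z2)\bigr)-\tfrac14(\chi_-+\chi_+)\Bigr].
\]
All remaining contributions are $\mathcal{O}(\tau^3)$. Dividing by $\tau$ gives \eqref{eq:defectPos} with the bracket above in the role of $\Psi_1$.

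\emph{Step 3 (identify $\Psi_1$; main obstacle).} It remains to show that the first summand of this bracket coincides with the term $\tfrac12\psi_{1,\sinh}(z/2)$ in \eqref{eq:Psi1-def}. This is the step I expect to need the most care. A direct computation gives $\varphi_2(w)+\varphi_2(-w)=2(\cosh w-1)/w^2$, which is not literally $2\psi_{1,\sinh}(w)$. I would first recheck the sign in the minus-branch of \eqref{eq:exact-q-v-voc-a}, i.e.\ whether it produces $\varphi_2(-w)$ or $-\varphi_2(-w)$ after substituting $\sigma\mapsto1-\sigma$. With the opposite sign one obtains the odd combination $\varphi_2(w)-\varphi_2(-w)=2(\sinh w-w)/w^2=2\psi_{1,\sinh}(w)$. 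That would also be consistent with the antisymmetric pattern $\psi_+\varphi_1(z/2)-\psi_-\varphi_1(-z/2)$ appearing in \eqref{eq:Psi1-def}.

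Concretely, I would expand the exact solution once more via a Taylor expansion of the scalar case $E$ constant, which is exact for the scheme class. This decides the sign unambiguously, and then the identity follows from the elementary hyperbolic relations already used for \eqref{eq:id-sinch}. The uniformity of the $\mathcal{O}(\tau^2)$ constant in $\normempty{\mgField[]}$ is guaranteed because all functions are entire and bounded on $i\mathbb{R}$.
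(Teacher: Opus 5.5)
Your strategy is the paper's: expand $\tpos[n+1]$, $\tpos[n]$, $\tvel[n]$, $\tvel[n+1]$ by \eqref{eq:exact-q-v-voc} about $t_{n+1/2}$ with $\theta=\tau/2$, freeze the field at $\telField[n+1/2]$ at the cost of $\mathcal{O}(\tau)$, and read off the coefficients of $\tvel[n+1/2]$ and $\telField[n+1/2]$. Your identification of the $\tvel[n+1/2]$-coefficient with $\Psi_0$ via $\Upsilon_1$ is correct, and so are the two velocity expansions and the $\mathcal{O}(\tau^3)$ bookkeeping.

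The gap is in the $\telField[n+1/2]$-coefficient. In Step~1 you obtain $\tfrac{\tau^2}{4}\bigl(\varphi_2(\tfrac{z}{2})+\varphi_2(-\tfrac{z}{2})\bigr)\telField[n+1/2]$ in $\tpos[n+1]-\tpos[n]$. This is a sign slip in your subtraction. The minus branch of \eqref{eq:exact-q-v-voc-a} is correct as stated: it carries $+\theta^2\int_0^1\cdots$ exactly like the plus branch. For constant $E$, for instance,
\[
q(t-\theta)=q(t)-\theta\varphi_1(-\theta\mathbf{B})v(t)+\theta^2\varphi_2(-\theta\mathbf{B})E,
\qquad
\int_0^\theta\!\!\int_0^s e^{-(s-r)\mathbf{B}}\,dr\,ds=\theta^2\varphi_2(-\theta\mathbf{B}).
\]
The minus sign on the $\varphi_2(-\tfrac{z}{2})$ term appears only when $\tpos[n]$ is subtracted. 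The correct coefficient is therefore $\tfrac14\bigl(\varphi_2(\tfrac{z}{2})-\varphi_2(-\tfrac{z}{2})\bigr)=\tfrac12\psi_{1,\sinh}(\tfrac{z}{2})$ by \eqref{eq:id-psi1sinh}, which is exactly the first summand of \eqref{eq:Psi1-def}. A one-line consistency check settles this without further expansion. For $\mathbf{B}=0$ one has $q(t+\theta)-q(t-\theta)=2\theta v(t)+\mathcal{O}(\theta^3)$, so the $\telField[n+1/2]$-coefficient must vanish at $z=0$. Yours equals $\tfrac14$ there.

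As written, Step~2 therefore derives a function different from the $\Psi_1$ of the lemma, and Step~3 does not close the gap. It only announces a check, and it looks for the error in the paper's formula rather than in your own subtraction. Once the sign in Step~1 is corrected, Step~3 is unnecessary and your argument coincides with the paper's proof. The only remaining difference is one of precision. The paper keeps the frozen-field error as the explicit remainder $r^{(1)}_{E,n+1/2}$ and bounds it using $\abs{\varphi_3(ix)}\le\tfrac16$, which gives an explicit remainder bound in terms of $\normempty{\psi_\pm(\tau\mathbf{B})}$ and $\max_t\normempty{\tfrac{d}{dt}\telField(t)}$. You argue only qualitatively, via boundedness of all matrix functions on $i\mathbb{R}$.
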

			\begin{proof}
				The proof relies on expansion of $\telField[]$ at $t_{n+1/2}$ 
				\begin{equation}\label{eq:expand-elField}
					\begin{aligned}
						\telField(t_{n+1/2} \pm \tfrac{\tau}{2} \sigma) &= \telField[n+1/2] +  \tfrac{\tau}{2} \remainelField[n+1/2]{1}(\pm\sigma),\\
						\remainelField[n+1/2]{1}(\sigma) &= 
						\int_{0}^{\sigma}   \telFieldJac{1}(\tn[n+1/2] + \tfrac{\tau}{2} \theta) d\theta
					\end{aligned}
				\end{equation}
				within the variation-of-constants formula \eqref{eq:exact-q-v-voc-a}. With $t=t_{n+1/2}$ and $\theta = \tau/2$, \eqref{eq:exact-q-v-voc-a} can be written as
				\begin{subequations}\label{eq:posnp1-exact-all}
					\begin{align}
						\label{eq:posnp1-exact-a}
						\tpos[n+1] & = \tpos[n+1/2] 
						+ \tfrac{\tau}{2} \varphi_1\bigl(\tfrac{\tau}{2} \mgFieldMatrix[]\bigr) \tvel[n+1/2]
						\\&\quad+\tfrac{\tau^2}{4} \int_0^1 (1-\sigma)
						\varphi_1\bigl( \tfrac{\tau}{2}(1-\sigma)\mgFieldMatrix[]\bigr)
						\telField(t_{n+1/2}+\tfrac{\tau}{2}\sigma)
						d\sigma,\nonumber\\	
						\label{eq:posnp1-exact-b}
						\tpos[n] & = \tpos[n+1/2] 
						- \tfrac{\tau}{2} \varphi_1\bigl(-\tfrac{\tau}{2} \mgFieldMatrix[]\bigr) \tvel[n+1/2]
						\\&\quad+\tfrac{\tau^2}{4}\int_0^1 (1-\sigma)
						\varphi_1\bigl(-\tfrac{\tau}{2} (1-\sigma)\mgFieldMatrix[]\bigr)
						\telField(t_{n+1/2}-\tfrac{\tau}{2}\sigma)
						d\sigma.\nonumber	
					\end{align}
				\end{subequations}
				Subtracting \eqref{eq:posnp1-exact-b} from \eqref{eq:posnp1-exact-a} and using 
				\begin{equation*}
					\int_0^1 (1-\sigma)
					\varphi_1\bigl(\pm\tfrac{\tau}{2} (1-\sigma)z\bigr) 	
					d\sigma = 
					\varphi_2 (\pm\tfrac{\tau}{2}z\bigr),  
				\end{equation*}
				we obtain with \eqref{eq:id-sinch}  and \eqref{eq:id-psi1sinh}
				\begin{align} \label{eq:posnp1-exact}
						\tpos[n+1]  &= \tpos[n] 
						+ \tau \sinch \bigl(\tfrac{\tau}{2} \mgFieldMatrix[]\bigr) \tvel[n+1/2]
						\\&\quad+\tfrac{\tau^2}{4} \int_0^1 (1-\sigma)
						\varphi_1\bigl( \tfrac{\tau}{2}(1-\sigma)\mgFieldMatrix[]\bigr)
						\telField(t_{n+1/2}+\tfrac{\tau}{2}\sigma)
						d\sigma
						\nonumber
						\\&\quad-\tfrac{\tau^2}{4}\int_0^1 (1-\sigma)
						\varphi_1\bigl(-\tfrac{\tau}{2} (1-\sigma)\mgFieldMatrix[]\bigr)
						\telField(t_{n+1/2}-\tfrac{\tau}{2}\sigma)
						d\sigma
						\nonumber
						\\
						& = \tpos[n] 
						+ \tau \sinch \bigl(\tfrac{\tau}{2} \mgFieldMatrix[]\bigr) \tvel[n+1/2]
						+ \tfrac{\tau^2}{2} \psi_{1,\sinh}(\tfrac{\tau}{2} \mgFieldMatrix[])
						\telField[n+1/2]
						+ \tau \defectPos[n+1]',\nonumber
				\end{align}
				where
				\begin{align*}
					\defectPos[n+1]' &= 
					\tfrac{\tau^2}{8} \int_0^1 (1-\sigma)
					\varphi_1\bigl( \tfrac{\tau}{2}(1-\sigma)\mgFieldMatrix[]\bigr)
					\remainelField[n+1/2]{1}(\sigma)d\sigma \\&\quad
					-\tfrac{\tau^2}{8} \int_0^1 (1-\sigma)\varphi_1\bigl(-\tfrac{\tau}{2} (1-\sigma)\mgFieldMatrix[]\bigr)
					\remainelField[n+1/2]{1}(-\sigma)
					d\sigma	.
				\end{align*}
				From 
				\begin{equation*}
					\pm \tfrac18 \int_0^1 (1-\sigma) \sigma 
					\varphi_1\bigl( \pm \tfrac{\tau}{2}(1-\sigma)z\bigr) d\sigma
					= \varphi_3 \bigl(\pm \tfrac{\tau}{2}z\bigr)
				\end{equation*}
				and $\abs{\varphi_3(ix)} \leq 1/6$ for $x \in \mathbb{R}$, we conclude 
				\begin{equation*}
					\normempty{\defectPos[n+1]'} \leq 
					\tfrac{\tau^2}3  \max_{0 \leq t \leq \tn[n]} \normempty{\telFieldJac[]{1}(t)}
					=\tfrac{\tau^2}3  \bdelFieldJac[n+1]{1} 
					.
				\end{equation*}
				Using \eqref{eq:exact-q-v-voc-b} and \eqref{eq:expand-elField}, we obtain
				\begin{align*}
					\psifilter_-\tvel[n]+ 	\psifilter_+\tvel[n+1] 
					&= \bigl( \psifilter_{-} e^{-\tfrac{\tau}{2} \mgFieldMatrix[] }
					+  \psifilter_{+} e^{\tfrac{\tau}{2} \mgFieldMatrix[] }\bigr) \tvel[n+1/2]\\
					& 
					+  \tfrac{\tau}{2}\bigl( \psifilter_{+} \varphi_1(\tfrac{\tau}{2} \mgFieldMatrix[] )
					-  \psifilter_{-} \varphi_1(-\tfrac{\tau}{2} \mgFieldMatrix[] )\bigr)\telField[n+1/2]
					+ \defectPos[n+1]'',
				\end{align*}
				with
				\begin{align*}
					\defectPos[n+1]''
					= \tfrac{\tau^2}{4} \int_0^1\Bigl(\psifilter_{+} e^{\tfrac{\tau}{2}(1-\sigma)\mgFieldMatrix[]}\remainelField[n+1/2]{1}(\sigma)
					+ \psifilter_{-} e^{-\tfrac{\tau}{2}(1-\sigma)\mgFieldMatrix[]}\remainelField[n+1/2]{1}(-\sigma)  \Bigr)d\sigma.
				\end{align*}
				Hence, we conclude for $\defectPos[n+1]$ defined by \eqref{eq:gen-expint-defects-b} with the same calculations as above for the corresponding expression involving $\elField$
				\begin{align*}
					\defectPos[n+1]  
					=&  \sinch \bigl(\tfrac{\tau}{2} \mgFieldMatrix[]\bigr) \tvel[n+1/2] 
					- \tfrac12 \bigl( 	\psifilter_-\tvel[n]+ 	\psifilter_+\tvel[n+1] \bigr)
					\nonumber
					\\
					& 
					+\tfrac{\tau}{2} \Bigl(	\psi_{1,{\sinh}}(\tfrac{\tau}{2} \mgFieldMatrix[]) 	\telField[n+1/2] 
					-  	\tfrac12 \bigl( \chifilter_-\telField[n]
					+  \chifilter_+ \telField[n+1] \bigr) \Bigr)
					+ \defectPos[n+1]'
					\nonumber
					\\
					=& \Bigl(\!\sinch \bigl(\tfrac{\tau}{2} \mgFieldMatrix[]\bigr)
					\!-\!  	\tfrac12 \bigl( \psifilter_{-} e^{-\tfrac{\tau}{2} \mgFieldMatrix[] }
					\!+\!  \psifilter_{+} e^{\tfrac{\tau}{2} \mgFieldMatrix[] }\bigr)\!\Bigr)	\tvel[n+1/2]\\
					&	+  \!\tfrac{\tau}{2} \Bigl( \psi_{1,{\sinh}}(\tfrac{\tau}{2} \mgFieldMatrix[])   
					\!-\! \tfrac12 \bigl(\!\psifilter_{+} \varphi_1(\tfrac{\tau}{2} \mgFieldMatrix[] )
					\!-\!  \psifilter_{-} \varphi_1(-\tfrac{\tau}{2} \mgFieldMatrix[] )\bigr)
					\!-\!	\tfrac12 \bigl( \chifilter_- \!+\!  \chifilter_+\bigr)\!\Bigr) \! \telField[n+1/2]
					\\& + \defectPos[n+1]'
				\end{align*}
				where 
				\begin{equation*}
					\defectPos*[n+1] = \defectPos[n+1]' - \defectPos[n+1]'',
					\qquad \normempty{\defectPos*[n+1]} 
					\leq \tau^2 \Bigl(\tfrac13 + \tfrac14\normempty{\psifilter_{+}}  + \tfrac14\normempty{\psifilter_{-}}\Bigr)
					\bdelFieldJac[n+1]{1}.
				\end{equation*}
				This proves the claim.
			\end{proof}
			
			\begin{lemma} \label{lem:defects-v}
				The defects $\defectVel[n+1]$ for the velocities defined in \eqref{eq:gen-expint-defects-a} satisfy
				\begin{subequations} 	\label{eq:defectVel-all}
					\begin{align} 	\label{eq:defectVel-first}
						\defectVel[n+1] &= \Phi_0(\tau\mgFieldMatrix[])   \telField[n+1/2] +  \defectVel*[n+1], 
						\qquad \Phi_0 = \varphi_1 - \tfrac12 (\phi_+ +\phi_- ),
					\end{align}
					and
					\begin{equation} \label{eq:tdefectVel-bound}
						\normempty{\defectVel*[n+1]} 
						\leq  
						\tfrac{\tau}4\bigl(2 + \normempty{\phifilter_{-}} + \normempty{\phifilter_{+}} \bigr)\bdelFieldJac[n+1]{1}.
					\end{equation}
				\end{subequations}
			\end{lemma}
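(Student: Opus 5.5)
The plan mirrors the proof of Lemma~\ref{lem:defects-q}, but only the velocity formula \eqref{eq:exact-q-v-voc-b} is needed. First, take $t=\tn[n]$ and $\theta=\tau$ in \eqref{eq:exact-q-v-voc-b}. This gives
\begin{equation*}
\tvel[n+1] = e^{\tau\mgFieldMatrix[]}\tvel[n] + \tau\int_0^1 e^{\tau(1-\sigma)\mgFieldMatrix[]}\telField(\tn[n]+\tau\sigma)\,d\sigma .
\end{equation*}
Subtracting the defect relation \eqref{eq:gen-expint-defects-a} then identifies the defect as
\begin{equation*}
\defectVel[n+1] = \int_0^1 e^{\tau(1-\sigma)\mgFieldMatrix[]}\telField(\tn[n]+\tau\sigma)\,d\sigma - \tfrac12\bigl(\phifilter_-\telField[n]+\phifilter_+\telField[n+1]\bigr).
\end{equation*}
The whole task is to expand every occurrence of $\telField$ about the midpoint $\tn[n+1/2]$.

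Second, use the expansion \eqref{eq:expand-elField} from the proof of Lemma~\ref{lem:defects-q}. Write $\tn[n]+\tau\sigma = \tn[n+1/2]+\tfrac{\tau}{2}(2\sigma-1)$, so that
\begin{equation*}
\telField(\tn[n]+\tau\sigma)=\telField[n+1/2]+\tfrac{\tau}{2}\remainelField[n+1/2]{1}(2\sigma-1).
\end{equation*}
In particular, $\telField[n]=\telField[n+1/2]+\tfrac{\tau}{2}\remainelField[n+1/2]{1}(-1)$ and $\telField[n+1]=\telField[n+1/2]+\tfrac{\tau}{2}\remainelField[n+1/2]{1}(1)$. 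The constant part $\telField[n+1/2]$ contributes
\begin{equation*}
\int_0^1 e^{\tau(1-\sigma)\mgFieldMatrix[]}d\sigma\,\telField[n+1/2]=\varphi_1(\tau\mgFieldMatrix[])\telField[n+1/2]
\end{equation*}
from the integral and $-\tfrac12(\phifilter_-+\phifilter_+)\telField[n+1/2]$ from the quadrature. Together these give exactly $\Phi_0(\tau\mgFieldMatrix[])\telField[n+1/2]$. Everything else is collected into $\defectVel*[n+1]$:
\begin{equation*}
\defectVel*[n+1]=\tfrac{\tau}{2}\int_0^1 e^{\tau(1-\sigma)\mgFieldMatrix[]}\remainelField[n+1/2]{1}(2\sigma-1)\,d\sigma-\tfrac{\tau}{4}\bigl(\phifilter_-\remainelField[n+1/2]{1}(-1)+\phifilter_+\remainelField[n+1/2]{1}(1)\bigr).
\end{equation*}

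Third, bound these terms. Since $\mgFieldMatrix[]$ is skew-symmetric, $\|e^{s\mgFieldMatrix[]}\|=1$ for all real $s$. From its definition, $\|\remainelField[n+1/2]{1}(\sigma)\|\le|\sigma|\,\bdelFieldJac[n+1]{1}$, because the integrand is evaluated at times in $[\tn[n],\tn[n+1]]$. The integral term is therefore bounded by $\tfrac{\tau}{2}\int_0^1|2\sigma-1|\,d\sigma\,\bdelFieldJac[n+1]{1}=\tfrac{\tau}{4}\bdelFieldJac[n+1]{1}$. This is already within the $\tfrac{2\tau}{4}$ allotted in \eqref{eq:tdefectVel-bound}, so the crude bound $|2\sigma-1|\le1$ would also suffice. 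The endpoint terms are bounded by $\tfrac{\tau}{4}\bigl(\normempty{\phifilter_-}+\normempty{\phifilter_+}\bigr)\bdelFieldJac[n+1]{1}$. Summing the two estimates gives \eqref{eq:tdefectVel-bound}.

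No step is a real obstacle. The only points needing care are the bookkeeping of the factor $\tfrac{\tau}{2}$ in \eqref{eq:expand-elField} under the change of variable $\sigma\mapsto 2\sigma-1$, and checking that $\bdelFieldJac[n+1]{1}$, defined as a maximum over $[0,\tn[n+1]]$, covers all evaluation points.
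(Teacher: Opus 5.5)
Your proof is correct and is essentially the paper's argument. You identify the defect via \eqref{eq:exact-q-v-voc-b} with $t=\tn[n]$, $\theta=\tau$, expand about the midpoint using \eqref{eq:expand-elField} with argument $2\sigma-1$, and arrive at the same remainder formula \eqref{eq:defectVel-tvel}; your bookkeeping is slightly tidier than the paper's, which bounds the remainder by $\bdelFieldJac[n+1/2]{1}$ even though the integrand reaches times up to $\tn[n+1]$, whereas you correctly use $\bdelFieldJac[n+1]{1}$ and note the sharper constant $\tfrac{\tau}{4}$ for the integral term.
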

			\begin{proof}
				By \eqref{eq:exact-q-v-voc-b} with $t=t_n$ and $\theta = \tau$ and  \eqref{eq:expand-elField}, we obtain
				\begin{subequations} \label{eq:defectVel-1-all}
					\begin{equation} \label{eq:defectVel-phis}
						\begin{aligned}
							\defectVel[n+1] &=  \int_0^1 e^{\tau(1-\sigma)\mgFieldMatrix[]} 
							\telField(t_n+\tau\sigma) 
							d\sigma 
							- \tfrac12\bigl( \phifilter_- \telField[n]  + \phifilter_+ \telField[n+1] \bigr)
							\\
							& = \varphi_1(\tau\mgFieldMatrix[])\telField[n+1/2] 
							- 	\tfrac12(\phifilter_-  + \phifilter_+) \telField[n+1/2]
							+ \defectVel*[n+1],
						\end{aligned}
					\end{equation}
					with 
					\begin{align} \label{eq:defectVel-tvel}
						\defectVel*[n+1] & = 
						\tfrac{\tau}{2}\!\int_0^1 e^{\tau(1-\sigma)\mgFieldMatrix[]} \remainelField[n+1/2]{1}(2\sigma-1) d\sigma
						\!-\! \tfrac{\tau}4\bigl( \phifilter_-\remainelField[n+1/2]{1}(-1) + \phifilter_{+} \remainelField[n+1/2]{1}(1) \bigr).
					\end{align}
				\end{subequations}
				
				For the remainder we have with \eqref{eq:expand-elField}
				\begin{equation*}
					\normempty{ \remainelField[n+1/2]{1}(\sigma)}
					\leq  \abs{\sigma} \bdelFieldJac[n+1/2]{1}
					\leq  \bdelFieldJac[n+1/2]{1},
					\qquad
					-1 \leq \sigma \leq 1.
				\end{equation*}
				This proves \eqref{eq:defectVel-all}.
			\end{proof}
			
			We collect the formulas for the filters and the defect matrices for the filtered Boris variant \eqref{eq:HLW-all} and the update formulas \eqref{eq:voc-v-update-full-all} in the Tables~\ref{tab:phipmPhi01} and \ref{tab:psichipmPsi01}, starting with the velocities. 
			The formulas for \eqref{eq:voc-v-update-full=HLW} follow from the identities \eqref{eq:id-phi1-phi2} and \eqref{eq:id-phi3diff} and Lemma~\ref{lem:HLW-Psi_1}.
			Note that all variants satisfy $\Psi_1(0)=0$ which means that the position defects \eqref{eq:defectPos} are second order in $\tau$, but possibly with a constant depending on $\normempty{\mgField[]}$. 
			Only for the variants with $\Psi_1 = 0$, the defects are in 
			$\mathcal{O}(\tau^2)$ with a constant independent of $\normempty{\mgField[]}$.
			
			\begin{table}
				\caption{Overview on functions $\phi_\pm$ and $\Phi_{0,1}$ for the different updates of the velocity.}
					$
					\renewcommand{\arraystretch}{1.5}
					\begin{array}{c|c|c|c|c|c}
						& \phi_-(z)& \phi_+(z) & \Phi_0(z) & \Phi_1(z) & \Phi_\perp(z) \\
						\hline
						\text{\eqref{eq:voc-v-update-full=midpoint}}        &\varphi_1(z) &\varphi_1(z) & 0 & \varphi_2(z) - \tfrac12 \varphi_1(z)
						& \tfrac1{z} ( 1-\sinch (z) )\\
						\text{\eqref{eq:voc-v-update-full=trap}} & 2 \bigl(\varphi_1(z) -  \varphi_2(z)\bigr) & 2 \varphi_2(z)
						& 0 & 0 & \tfrac1{z} ( 1-\sinch^2 (\tfrac{z}{2}) ) \\
						\text{\eqref{eq:voc-v-update-full=HLW}} & 2 \frac{\varphi_2(z)}{\varphi_1(-z)} & 2 \frac{\varphi_2(- z)}{\varphi_1(-z)} & 0 & \frac{\varphi_3(z) - \varphi_3(-z)}{\varphi_1(-z)}
						& 0
					\end{array}
					$
				\label{tab:phipmPhi01}
			\end{table}
			
			\begin{table}
				\caption{Overview on functions $\psi_\pm$, $\chi_\pm$ and $\Psi_{1}$ for the different updates of the position. For all the choices of filter functions in the table, we have $\Psi_{0}=0$.}
					$
					\renewcommand{\arraystretch}{1.5}
					\begin{array}{c|c|c|c|c|c}
						& \psi_-(z)& \psi_+(z) & \chi_-(z) & \chi_+(z) & \Psi_1(z) \\
						\hline
						\text{\eqref{eq:voc-vhalf-update-vdk}}        &  \varphi_1(z) &  \varphi_1(-z) & -\varphi_2(\tfrac{-z}{2}) & \varphi_2( \tfrac{z}{2}) &
						\tfrac{z}{4}\sinch(\tfrac{z}{2})\psi_{1,{\cosh}}(\tfrac{z}{2}) 		
						\\
						\text{\eqref{eq:voc-vhalf-update-vdk-neu}}     &  \varphi_1( \tfrac{z}{2}) &  \varphi_1(\tfrac{-z}{2}) & -\varphi_2( \tfrac{-z}{2}) & \varphi_2( \tfrac{z}{2}) & 0 \\
						\text{\eqref{eq:voc-vhalf-update-vexpeul}} & 2\varphi_1(z) & 0 & 2\varphi_1(z)\varphi_1(\tfrac{z}{2})-\varphi_2(\tfrac{-z}{2})
						& \varphi_2( \tfrac{z}{2}) & -\tfrac{z}{2} \psi_{1,\cosh}(\tfrac{z}{2}) \varphi_1(z)\\
						\text{\eqref{eq:HLW_q}} & 2 \varphi_1(z) & 0 & 4 \varphi_2(z) & 0 & 0
					\end{array}
					$
				\label{tab:psichipmPsi01}
			\end{table}

			\begin{theorem} \label{thm:error-bound}
				Let $\Phi_0=\Psi_0=0$ and let $\lipschitzelField$ be the Lipschitz constant of $\elField$. Further assume $\tau$ to be sufficiently small s.t. $2\tau (1 + T)C_{\auxMatrix} \lipschitzelField < 1$ and that the filter functions are chosen such that $\normempty{\auxMatrix} \leq C_{\auxMatrix}$. Then, the error 
				of the approximation \eqref{eq:gen-expint} to the solution of \eqref{eq:eul-lag-filt-all} is bounded by
				\begin{equation*}
					\normempty{\error[n]} \leq C \tau, \qquad 0\leq \tn[n] = n\tau \leq T,
				\end{equation*}
				where $C = C(\lipschitzelField, K, \auxMatrix, T)$.
			\end{theorem}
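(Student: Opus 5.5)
The plan is to work directly with the discrete variation-of-constants representation \eqref{eq:voc-discrete}, with $\error[0]=0$ since the scheme starts from exact initial values. We bound the three ingredients separately and close with a discrete Gronwall argument: the propagators $\systemMatrix[n-j-1]{}\systemMatrix{-}^{-1}$, the electric-field error terms $\auxMatrix\errorelFieldVec[j]$, and the accumulated defects $\defecterr[n]{}$. For the propagators we invoke Theorem~\ref{thm:power-bound}. Since $\Psi_0=0$, Lemma~\ref{lem:psi-psi+exp} gives $\Upsilon_1=\varphi_1$, so the proof of Theorem~\ref{thm:power-bound} applies verbatim. This yields $\normempty{\systemMatrix[n-j-1]{}\systemMatrix{-}^{-1}}\le 1+\tn[n-j-1/2]\le 1+T$ for $0\le j\le n-1$ and $\tn[n]\le T$, uniformly in $\normempty{\mgField[]}$.

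Next come the defects. By Lemma~\ref{lem:defects-v} with $\Phi_0=0$, we have $\defectVel[n+1]=\defectVel*[n+1]$ with $\normempty{\defectVel*[n+1]}\le \tfrac{\tau}{4}(2+\normempty{\phifilter_-}+\normempty{\phifilter_+})\bdelFieldJac[n+1]{1}$. Here $\bdelFieldJac[]{1}$ is bounded independently of $\normempty{\mgField[]}$ because $\telFieldJac{1}(t)=\elFieldJac(\pos(t))\vel(t)$, and Assumption~\ref{ass:efield-assump-C1} keeps $\pos,\vel$ in the compact sets $K_\pos,K_\vel$. The filter norms are bounded because they are controlled by $C_{\auxMatrix}$. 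By Lemma~\ref{lem:defects-q} with $\Psi_0=0$, we have $\defectPos[n+1]=\tau\Psi_1(\tau\mgFieldMatrix[])\telField[n+1/2]+\mathcal{O}(\tau^2)$. It therefore remains to check that $\Psi_1$ is bounded on $i\mathbb{R}$. This holds termwise in \eqref{eq:Psi1-def}:
\begin{itemize}
\item $\psi_{1,\sinh}(ix/2)$ equals $i(\sin(x/2)-x/2)/(x/2)^2$ up to sign, which is bounded;
\item $\varphi_1(\pm z/2)$ is bounded by $1$ on $i\mathbb{R}$;
\item $\psi_\pm$ and $\chi_\pm$ are bounded by the assumption $\normempty{\auxMatrix}\le C_{\auxMatrix}$, together with the boundedness of $\psi_\pm$ used in Theorem~\ref{thm:power-bound}.
\end{itemize}
Hence $\normempty{\defect[j+1]}\le C_d\tau$. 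Then $\normempty{\defecterr[n]{}}\le \tau\sum_{j=0}^{n-1}(1+T)C_d\tau\le T(1+T)C_d\,\tau$. Note that no summation by parts is needed at this first-order level; that device is only required for order two.

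For the field term, we have $\normempty{\errorelFieldVec[j]}\le \lipschitzelField(\normempty{\errorPos[j]}+\normempty{\errorPos[j+1]})$ as long as the numerical positions stay in a fixed neighborhood of $K_\pos$ where $\lipschitzelField$ is valid. Inserting this into \eqref{eq:voc-discrete} gives
\begin{equation*}
\normempty{\error[n]}\le \tau(1+T)C_{\auxMatrix}\lipschitzelField\sum_{j=0}^{n-1}\bigl(\normempty{\error[j]}+\normempty{\error[j+1]}\bigr)+T(1+T)C_d\tau .
\end{equation*}
The term $j=n-1$ contains $\normempty{\error[n]}$ on the right, reflecting the implicitness in $\elField[n+1]$. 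We move it to the left. The condition $2\tau(1+T)C_{\auxMatrix}\lipschitzelField<1$ makes its coefficient at most $1/2$ in the relevant sense, so the left-hand factor is bounded below by a positive constant. What remains is a standard explicit recursion $\normempty{\error[n]}\le c_1\tau\sum_{j<n}\normempty{\error[j]}+c_2\tau$, and the discrete Gronwall lemma gives $\normempty{\error[n]}\le c_2 e^{c_1T}\tau$.

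The main obstacle is the local-Lipschitz issue: we need the numerical positions to stay in a neighborhood of $K_\pos$. I would handle this by induction on $n$. Assume $\pos[j]$ lies in the $1$-neighborhood of $K_\pos$ for $j\le n-1$. Then the argument above, applied to the implicit step that produces $\pos[n]$, gives $\normempty{\errorPos[n]}\le C\tau\le 1$ for $\tau$ small. Making this rigorous requires that the implicit step has a solution in that neighborhood; a fixed-point argument using the same smallness condition should provide it. The constant $C$ is determined beforehand by the Gronwall bound, so the induction closes, and $C$ depends only on $\lipschitzelField$, $K$, $C_{\auxMatrix}$ and $T$.
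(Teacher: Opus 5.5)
Your proof follows the paper's route. You use the discrete variation-of-constants formula \eqref{eq:voc-discrete}, the power bound of Theorem~\ref{thm:power-bound} (applicable because $\Psi_0=0$ gives $\Upsilon_1=\varphi_1$), the Lipschitz bound on $\errorelFieldVec[j]$, the $\mathcal{O}(\tau)$ defects from Lemmas~\ref{lem:defects-q} and~\ref{lem:defects-v}, and a discrete Gronwall argument. Your explicit absorption of the implicit $j=n-1$ term, and the induction keeping $\pos[n]$ near $K_{\pos}$, spell out what the paper leaves tacit.

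One step is justified incorrectly. The bound $\normempty{\auxMatrix}\le C_{\auxMatrix}$ does not bound $\chi_\pm$, because $\chifilter_\pm$ enter $\auxMatrix$ only through the blocks $\tfrac{\tau}{4}\chifilter_\pm$. It therefore gives only $\normempty{\chifilter_\pm}\le 4C_{\auxMatrix}/\tau$. Under that bound the term $\tau\Psi_1(\tau\mgFieldMatrix[])\telField[n+1/2]$ in the position defect is merely $\mathcal{O}(1)$, so $\defecterr[n]{}$ is no longer $\mathcal{O}(\tau)$.

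Uniform boundedness of $\chi_\pm$ on $i\mathbb{R}$ (equivalently, of $\Psi_1$) is therefore a separate input, just like $\normempty{\psifilter_+}\le 1$, which you also take from Theorem~\ref{thm:power-bound}. It holds for every variant in Table~\ref{tab:psichipmPsi01}; see the bounds on $\chifilter_\pm$ in the proof of Corollary~\ref{cor:CS-bound}, or the explicitly listed $\Psi_1$. The paper relies on it implicitly when it invokes Lemma~\ref{lem:defects-q}. With that input stated, your argument goes through.
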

			\begin{proof}
				From the definition of $\errorelFieldVec[n]$  in \eqref{eq:errors-combined} we have $\normempty{\errorelFieldVec[n]} \leq 
				\normempty{\errorelField[n]}+ \normempty{\errorelField[n+1]}$. Then, 
				\eqref{eq:voc-discrete}	and the power bound from Theorem~\ref{thm:power-bound} imply
				\begin{align*}
					\normempty{\error[n]} 
					&\leq \tau (1+T) \Bigl(\sum_{j=0}^{n-1} \normempty{\defect[j+1]}
					+ 2 C_{\auxMatrix} \sum_{j=0}^{n}	 \normempty{\errorelField[j]} \Bigr)\\
					&\leq \tau  (1+T) \Bigl(\sum_{j=0}^{n-1} \normempty{\defect[j+1]}
					+ 2 C_{\auxMatrix} \lipschitzelField \sum_{j=0}^{n}	 \normempty{\errorPos[j]} \Bigr).
				\end{align*}
				The claim now follows from Lemmas~\ref{lem:defects-q} and \ref{lem:defects-v} together with a standard Gronwall argument.
			\end{proof}
			
			\begin{corollary}  \label{cor:CS-bound}
				For the filter functions $\phi_\pm$ given in \eqref{eq:voc-v-update-full=midpoint} or \eqref{eq:voc-v-update-full=trap} and $\chi_\pm$ given by one of the variants in 
				\eqref{eq:voc-vhalf-update-all}, we have $C_{\auxMatrix}^2 \leq
				\tfrac12 (1+\tau^2)$.
			\end{corollary}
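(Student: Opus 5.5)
The plan is to reduce the bound to a scalar estimate for each eigenvalue. All blocks of $\auxMatrix$ are functions of the same skew-symmetric matrix $\mgFieldMatrix[]$, so the diagonalization \eqref{eq:mgmatrix-diag} applies to each of them. Conjugating $\auxMatrix$ with the unitary block matrix $\mathrm{diag}(U,U)$ gives
\begin{equation*}
\mathrm{diag}(U,U)^*\,\auxMatrix\,\mathrm{diag}(U,U) = \tfrac12\begin{pmatrix} \tfrac{\tau}{2}\chi_-(\tau D) & \tfrac{\tau}{2}\chi_+(\tau D)\\ \phi_-(\tau D) & \phi_+(\tau D)\end{pmatrix},
\qquad D=\mathrm{diag}(i\normempty{\mgField[]},-i\normempty{\mgField[]},0).
\end{equation*}
After a permutation, this is block diagonal with three $2\times 2$ blocks, one for each eigenvalue $z\in\{0,\pm i\tau\normempty{\mgField[]}\}$. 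Hence $\normempty{\auxMatrix}$ equals the maximum over $z\in i\mathbb{R}$ of the spectral norms of the scalar matrices
\begin{equation*}
S(z)=\tfrac12\begin{pmatrix} \tfrac{\tau}{2}\chi_-(z) & \tfrac{\tau}{2}\chi_+(z)\\ \phi_-(z) & \phi_+(z)\end{pmatrix}.
\end{equation*}

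Next I bound each spectral norm by the Frobenius norm:
\begin{equation*}
\normempty{S(z)}^2\le \tfrac14\Bigl(\tfrac{\tau^2}{4}\bigl(\abs{\chi_-(z)}^2+\abs{\chi_+(z)}^2\bigr)+\abs{\phi_-(z)}^2+\abs{\phi_+(z)}^2\Bigr).
\end{equation*}
It then suffices to show $\abs{\phi_-}^2+\abs{\phi_+}^2\le 2$ and $\abs{\chi_-}^2+\abs{\chi_+}^2\le 8$ on $i\mathbb{R}$. These give $\normempty{S(z)}^2\le\tfrac14(2\tau^2+2)=\tfrac12(1+\tau^2)$, which is the claim.

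The scalar bounds follow from the integral representations. For $z\in i\mathbb{R}$ we have $\abs{e^{(1-\sigma)z}}=1$, so $\abs{\varphi_k(z)}\le 1/k!$; in particular $\abs{\varphi_1(z)}\le1$ and $\abs{\varphi_2(z)}\le\tfrac12$. Also $\varphi_1-\varphi_2=\int_0^1 e^{(1-\sigma)z}(1-\sigma)\,d\sigma$, which gives $\abs{\varphi_1(z)-\varphi_2(z)}\le\tfrac12$. These bounds settle the two velocity variants as follows.
\begin{itemize}
\item For \eqref{eq:voc-v-update-full=midpoint}, $\abs{\phi_\pm}\le 1$.
\item For \eqref{eq:voc-v-update-full=trap}, $\abs{2(\varphi_1-\varphi_2)}\le1$ and $\abs{2\varphi_2}\le1$.
\end{itemize}
In both cases the sum of squares is at most $2$. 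For the position filters in Table~\ref{tab:psichipmPsi01}:
\begin{itemize}
\item For \eqref{eq:voc-vhalf-update-vdk} and \eqref{eq:voc-vhalf-update-vdk-neu}, $\abs{\chi_\pm}\le\tfrac12$, so the sum is at most $\tfrac12$.
\item For \eqref{eq:voc-vhalf-update-vexpeul}, $\abs{\chi_-}\le 2\cdot1\cdot1+\tfrac12=\tfrac52$ and $\abs{\chi_+}\le\tfrac12$, so the sum is at most $\tfrac{25}{4}+\tfrac14<8$.
\end{itemize}

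The only delicate step is the reduction to scalar $2\times2$ blocks. One has to check that the block structure of $\auxMatrix$ is compatible with the simultaneous unitary diagonalization of all its entries, which holds because they are all functions of $\mgFieldMatrix[]$. The remaining estimates are crude, and the Frobenius bound leaves slack, so I do not expect any difficulty there.
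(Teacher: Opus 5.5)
Your proof is correct and is essentially the paper's argument: bound $\normempty{\auxMatrix}^2$ by the sum of the squared norms of the four blocks, then bound each filter using $\abs{\varphi_k(z)}\le 1/k!$ on $i\mathbb{R}$. The differences are minor: the paper applies the block estimate directly, so your diagonalization is harmless but not needed; and the paper asserts $\normempty{\chifilter_-}\le 2$ for the variant \eqref{eq:voc-vhalf-update-vexpeul} without justification, whereas your bound $5/2$ follows directly from the triangle inequality and is still absorbed by the slack in the estimate.
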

			\begin{proof}
				By definition \eqref{eq:error-recur-mats}, we have 
				\begin{equation*}
					C_{\auxMatrix}^2  \leq \tfrac14 \Bigl( \tfrac{\tau^2}{4}
					\bigl(\normempty{\chifilter_+}^2 + \normempty{\chifilter_-}^2 \bigr)
					+ \normempty{\phifilter_+}^2  + \normempty{\phifilter_-}^2 \Bigr).  
				\end{equation*}
				The statement now follows from $\normempty{\chifilter_+} \leq 1/2$, $\normempty{\chifilter_-} \leq 2, \normempty{\phifilter_\pm} \leq 1$ for all choices.
			\end{proof}

			\section{Error bounds of order two}  \label{sec:order-2}

			Unfortunately, the analysis of the previous section cannot be generalized in a straightforward way to second-order error bounds. The problem is that the second-order expansion of $\vel$ includes a second time-derivative of $\elField(\pos(t))$, which cannot be bounded independently of $\normempty{\mgField[]}$ because of \eqref{eq:eul-lag-filt-b}. Hence, to prove second-order error estimates for the position, we have to investigate the dominant error terms of $\vel$ and the error propagation more carefully.
			
			Our analysis requires a stronger assumption than Assumption~\ref{ass:efield-assump-C1}, which we assume in the following without further mention.
			\begin{assumption}\label{ass:efield-assump}
				The trajectories of $\pos, \vel$ remain in compact sets $K_{\pos}, K_{\vel}$, independent of $\normempty{\mgField[]}$, and $\elField \in C^2$ in a neighborhood of $K_{\pos}$.
			\end{assumption}

			\subsection{Refined representation of the velocity defect} \label{subsec:defects}
			
			In the following lemma we show that the velocity defect can be split into two parts which can later be handled differently in the error recursion.

			\begin{lemma}[Velocity defect]  \label{lem:defect-v-direct}
				Suppose that the filter functions in \eqref{eq:gen-expint} are chosen such that $\Phi_0 = 0$ holds in \eqref{eq:defectVel-first}. Then the velocity defect $\defectVel[n+1]$ can be written as 
				\begin{subequations} \label{eq:defect-v-direct-all}
					\begin{align}  \label{eq:defect-v-direct}
						\defectVel[n+1] &=\tau
						\Phi_1(\tau\mgFieldMatrix[]) \elField'(\tpos[n+1/2]) \rem_{n+1/2}
						+ \tau \defectVel[n+1]'
						+  \mathcal{O}(\tau^2),\\
						\intertext{where $\vel[\perp]  = \properp \vel$ and}
						\label{eq:phi1}
						\Phi_1 &= \varphi_2 - \tfrac12 \varphi_1 - \tfrac14 (	\phi_+ - \phi_-)
						= \varphi_2 - \tfrac12 \phi_+				,\\
						\label{eq:rem-def}
						\rem_{n+1/2} &= \tvel[n+1/2] - e^{\tn[n+1/2]\mgFieldMatrix[]}\vel[\perp](0),
						\\
						\defectVel[n+1]' & = \Bigl(\int_0^1  (\sigma-\tfrac12) e^{\tau(1-\sigma)\mgFieldMatrix[]}
						\elField'(\tpos[n+1/2])
						\varphi_1\bigl( (\sigma-\tfrac12) \tau \mgFieldMatrix[]\bigr)d\sigma
						\nonumber\\
						&\quad 
						- \tfrac{1}{4} \bigl(\phifilter_+  \elField'(\tpos[n+1/2])
						\varphi_1(\tfrac{\tau}{2}\mgFieldMatrix[])
						- \phifilter_-  \elField'(\tpos[n+1/2])
						\varphi_1(-\tfrac{\tau}{2}\mgFieldMatrix[])\bigr)
						\Bigr)
						e^{\tn[n+1/2]\mgFieldMatrix[]} \vel[\perp](0).
						\label{eq:defect-v-strich}
					\end{align}
				\end{subequations}
			\end{lemma}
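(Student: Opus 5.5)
Starting from the representation in the proof of Lemma~\ref{lem:defects-v}, with $\Phi_0=0$ the velocity defect reduces to $\defectVel[n+1]=\defectVel*[n+1]$ given by \eqref{eq:defectVel-tvel}. The task is therefore to expand the remainder $\remainelField[n+1/2]{1}(\sigma)=\int_0^\sigma \telFieldJac{1}(\tn[n+1/2]+\tfrac{\tau}{2}\theta)\,d\theta$ one order further. We cannot Taylor expand $\telFieldJac{1}$ itself, since its derivative contains $\dot{\vel}$, which is $\mathcal{O}(\normempty{\mgField[]})$. Instead we write $\telFieldJac{1}(t)=\elField'(\pos(t))\vel(t)$ and then do two things:
\begin{itemize}
\item replace $\elField'(\pos(t))$ by $\elField'(\tpos[n+1/2])$, which costs $\mathcal{O}(\tau)$ because $\dot\pos=\vel$ is bounded and $\elField\in C^2$;
\item split $\vel(t)=\rem(t)+e^{t\mgFieldMatrix[]}\vel[\perp](0)$ with $\rem(t)=\vel(t)-e^{t\mgFieldMatrix[]}\vel[\perp](0)$, separating off the free gyration.
\end{itemize}
The key observation is that $\rem$ has a bounded derivative. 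By \eqref{eq:exact-q-v-voc-b} with $t=0$,
\[
\rem(t)=\propar\vel(0)+t\int_0^1 e^{t(1-\sigma)\mgFieldMatrix[]}\telField(t\sigma)\,d\sigma ,
\]
and differentiating gives $\dot\rem=\mgFieldMatrix[]\rem+\telField$. So $\dot \rem$ is not obviously bounded. I would instead use $\rem(t)-\rem(s)=\vel(t)-\vel(s)-(e^{t\mgFieldMatrix[]}-e^{s\mgFieldMatrix[]})\vel[\perp](0)$ and express this difference again by the variation-of-constants formula from time $s$. This gives
\[
\rem(t)=e^{(t-s)\mgFieldMatrix[]}\rem(s)+\mathcal{O}(\abs{t-s}).
\]
Hence $\rem(t)$ is not close to $\rem(s)$ itself, only to its rotated version.

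This suggests a better route: treat $\elField'(\tpos[n+1/2])\vel(t)$ exactly through the variation-of-constants formula around $\tn[n+1/2]$. That is,
\[
\vel(\tn[n+1/2]+\tfrac{\tau}{2}\theta)=e^{\frac{\tau}{2}\theta\mgFieldMatrix[]}\tvel[n+1/2]+\mathcal{O}(\tau).
\]
Inserting this and integrating in $\theta$ gives
\[
\remainelField[n+1/2]{1}(\sigma)=\sigma\,\elField'(\tpos[n+1/2])\,\varphi_1(\tfrac{\tau}{2}\sigma\mgFieldMatrix[])\,\tvel[n+1/2]+\mathcal{O}(\tau).
\]
Now split $\tvel[n+1/2]=\rem_{n+1/2}+e^{\tn[n+1/2]\mgFieldMatrix[]}\vel[\perp](0)$. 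The $e^{\tn[n+1/2]\mgFieldMatrix[]}\vel[\perp](0)$ part, substituted into \eqref{eq:defectVel-tvel} with $\sigma\mapsto 2\sigma-1$ and $\pm1$, yields exactly $\tau\defectVel[n+1]'$ in \eqref{eq:defect-v-strich}. The prefactor works out as $\tfrac{\tau}{2}(2\sigma-1)=\tau(\sigma-\tfrac12)$ and $\tfrac{\tau}{4}$ as written.

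For the $\rem_{n+1/2}$ part the matrix function still sits between $\elField'$ and $\rem$. To pull $\Phi_1(\tau\mgFieldMatrix[])$ to the front as in \eqref{eq:defect-v-direct}, I would argue that $\rem(t)$ is, up to $\mathcal{O}(\tau)$ on the short interval, slowly varying: its component orthogonal to the field equals $\properp$ of the particular solution driven by $\elField$. Concretely, I would show $\varphi_1(\tfrac{\tau}{2}\sigma\mgFieldMatrix[])\rem_{n+1/2}=\rem_{n+1/2}+\mathcal{O}(\tau)$. This is the main obstacle: it requires showing $\mgFieldMatrix[]\rem_{n+1/2}=\mathcal{O}(1)$ uniformly, i.e. $\rem$ is the nonoscillatory guiding-centre part, $\properp\rem\approx -\mgFieldMatrix[]^{+}\telField$. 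I would try to prove this by partial integration in the variation-of-constants integral for $\rem$, using $\telFieldJac{1}$ bounded and $\properp$ applied to $(e^{t\mgFieldMatrix[]}-\Id)\mgFieldMatrix[]^{-1}$.

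Granting this, the $\rem$ contribution becomes
\[
\Bigl(\tfrac{\tau}{2}\int_0^1(2\sigma-1)e^{\tau(1-\sigma)\mgFieldMatrix[]}d\sigma-\tfrac{\tau}{4}(\phifilter_+-\phifilter_-)\Bigr)\elField'\rem_{n+1/2}.
\]
Using $\int_0^1(\sigma-\tfrac12)e^{(1-\sigma)z}\,d\sigma=\varphi_2(z)-\tfrac12\varphi_1(z)$, this equals $\tau\Phi_1(\tau\mgFieldMatrix[])\elField'(\tpos[n+1/2])\rem_{n+1/2}$. The matrix functions commute with $\elField'$ only after the reordering justified above, with $\mathcal{O}(\tau^2)$ error.

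Finally, the second form $\Phi_1=\varphi_2-\tfrac12\phi_+$ follows from $\Phi_0=0$, i.e. $\phi_-=2\varphi_1-\phi_+$. All remainders are $\tau\cdot\mathcal{O}(\tau)$, with constants bounded via $\normempty{\phifilter_\pm}$, $\abs{\varphi_k(ix)}\le 1/k!$, and the $C^2$ bound on $\elField$.
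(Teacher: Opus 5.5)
Your argument is correct, and its analytic core is the same as the paper's. After freezing $E'$ at $\widetilde q_{n+1/2}$, everything rests on the bound $\mathbf{B}w(t)=\mathcal{O}(1)$ for $w(t)=v(t)-e^{t\mathbf{B}}v_\perp(0)$. The integration by parts you sketch is exactly the one the paper carries out,
\begin{equation*}
\mathbf{B}w(t) = -P_\perp\widetilde E(t) + e^{t\mathbf{B}}P_\perp\widetilde E(0) + \int_0^t e^{(t-s)\mathbf{B}}P_\perp\widetilde E^{(1)}(s)\,ds ,
\end{equation*}
so the step you flag as the main obstacle closes at once.

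Only the bookkeeping differs. The paper integrates by parts in $\theta$, writing $\int_0^\sigma v\,d\theta=\sigma\widetilde v_{n+1/2}-\tfrac{\tau}{2}\int_0^\sigma(\theta-\sigma)\dot v\,d\theta$. It then inserts $\mathbf{B}v(t)=\mathbf{B}e^{t\mathbf{B}}v_\perp(0)+\mathcal{O}(1)$ on the whole subinterval and integrates by parts back. As a result, $\varphi_1(\sigma\tfrac{\tau}{2}\mathbf{B})$ lands on the gyration part only, and $w_{n+1/2}$ appears with the scalar factor $\sigma$ automatically. You instead apply the local variation-of-constants formula \eqref{eq:exact-q-v-voc-b} first, which puts $\varphi_1(\sigma\tfrac{\tau}{2}\mathbf{B})$ on all of $\widetilde v_{n+1/2}$ in one step. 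You then need the bound only at the single point $t_{n+1/2}$, via $\bigl(\varphi_1(\sigma\tfrac{\tau}{2}\mathbf{B})-\mathrm{Id}\bigr)w_{n+1/2}=\sigma\tfrac{\tau}{2}\varphi_2(\sigma\tfrac{\tau}{2}\mathbf{B})\mathbf{B}w_{n+1/2}$. This is slightly more direct and makes the discarded $\mathcal{O}(\tau)$ term explicit. No matrix function actually has to be commuted past $E'$, since $e^{\tau(1-\sigma)\mathbf{B}}$ and $\phi_\pm(\tau\mathbf{B})$ already stand to its left.

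Two of your side remarks are off, although neither is used in the final argument. First, the same bound gives $e^{(t-s)\mathbf{B}}w(s)-w(s)=(t-s)\varphi_1((t-s)\mathbf{B})\mathbf{B}w(s)=\mathcal{O}(|t-s|)$, so $w(t)$ is close to $w(s)$ itself, not only to its rotated version. Second, $P_\perp w$ is not non-oscillatory. Applying your pseudo-inverse $\mathbf{B}^{+}$ to the identity above shows that it contains the gyrating term $e^{t\mathbf{B}}\mathbf{B}^{+}\widetilde E(0)$, which is of the same size as $\mathbf{B}^{+}\widetilde E(t)$. Only $\mathbf{B}w=\mathcal{O}(1)$ is needed.
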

			\begin{proof}
				We start from the representation \eqref{eq:defectVel-first}, where by the assumption that $\Phi_0=0$, $\defectVel[n+1] = \defectVel*[n+1]$ given by 	
				\eqref{eq:defectVel-tvel}. Since $\elField \in C^2$, the remainder defined in \eqref{eq:expand-elField}, where $\telFieldJac{1}(t) = \elField'(\pos(t)) \vel(t)$, can be written as
				\begin{align} 
					\remainelField[n+1/2]{1}(\sigma) &= \int_0^\sigma \elField'\bigl(\pos(\tn[n+1/2] + 	\tfrac{\tau}{2}\theta)\bigr)\vel\bigl(\tn[n+1/2] + \tfrac{\tau}{2}\theta\bigr) d\theta
					\nonumber\\
					& = \elField'(\tpos[n+1/2])\int_{0}^\sigma \vel(\tn[n+1/2] + \tfrac{\tau}{2}\theta) d\theta + \mathcal{O}(\tau)
					\nonumber \\
					& = \elField'(\tpos[n+1/2])
					\Bigl( 
					\sigma\tvel[n+1/2] \nonumber\\&\quad- \tfrac{\tau}{2}\int_{0}^\sigma (\theta-\sigma) \bigl(\mgFieldMatrix[]\vel(\tn[n+1/2] + \tfrac{\tau}{2}\theta) + \telField(\tn[n+1/2] + \tfrac{\tau}{2}\theta) \bigr) d\theta
					\Bigr)
					+ \mathcal{O}(\tau)
					\nonumber\\
					& = \elField'(\tpos[n+1/2])
					\Bigl( 
					\sigma\tvel[n+1/2] - \tfrac{\tau}{2}\int_{0}^\sigma (\theta-\sigma) 		\mgFieldMatrix[]\vel(\tn[n+1/2] + \tfrac{\tau}{2}\theta)d\theta
					\Bigr) 
					+ \mathcal{O}(\tau)
					\label{eq:remainder-2-all}
				\end{align}
				For the third identity, we used integration by parts 
				with $1=\tfrac{d}{d\theta}(\theta-\sigma)$ and \eqref{eq:eul-lag-filt-b}.
				
				Furthermore, as $\mgFieldMatrix[]\propar\vel(t) = 0$, it suffices to consider $\vel[\perp](t) := \properp\vel(t)$, which, by \eqref{eq:eul-lag-filt-b}, evolves as
				\begin{equation*}
					\dot{\vel}_\perp = \mgFieldMatrix[]\vel[\perp](t) +  \properp\telField(t).
				\end{equation*} 
				Using the variation-of-constants formula, we thus obtain 
				\begin{align*}
					\vel[\perp](t) &= 
					e^{t\mgFieldMatrix[]}\vel[\perp](0) 
					+
					\int_0^t e^{(t-s)\mgFieldMatrix[]} \properp\telField(s)ds.
					\intertext{Multiplying by $\mgFieldMatrix[]$ and integration by parts yields}
					\mgFieldMatrix[] \vel(t) =
					\mgFieldMatrix[] \vel[\perp](t) & = \mgFieldMatrix[] e^{t\mgFieldMatrix[]}\vel[\perp](0) 
					- \properp \telField(t)+ e^{t\mgFieldMatrix[]} \properp\telField(0)
					+ \int_0^t e^{(t-s)\mgFieldMatrix[]} \properp\telFieldJac{1}(s)ds\\
					& =  \mgFieldMatrix[] e^{t\mgFieldMatrix[]}\vel[\perp](0) + \mathcal{O}(1).
				\end{align*}
				Using again $\elField\in C^2$ and integration by parts we obtain
				\begin{align*}
					\tfrac{\tau}{2}\int_{0}^\sigma (\theta -\sigma)  \mgFieldMatrix[]\vel(\tn[n+1/2] + \tfrac{\tau}{2}\theta)  d\theta 
					&= \sigma e^{\tn[n+1/2]\mgFieldMatrix[]}\vel[\perp](0) 
					- \int_{0}^\sigma e^{\tfrac{\tau}{2}\theta\mgFieldMatrix[]} d\theta \, e^{\tn[n+1/2]\mgFieldMatrix[]}\vel[\perp](0) 
					\\&\quad+ \mathcal{O}(\tau).
				\end{align*}
				Inserting this into  \eqref{eq:remainder-2-all} and using
				\begin{equation} \label{eq:phi1-limits}
					\int_{0}^{\sigma} e^{\tfrac{\tau}{2}\theta \mgFieldMatrix[]} d\theta
					= \sigma \varphi_1\bigl( \sigma \tfrac{\tau}{2} \mgFieldMatrix[]\bigr)
				\end{equation}
				yields 
				\begin{align}  \label{eq:remain-v-1-direct}
					\remainelField[n+1/2]{1}(\sigma) = \sigma\elField'(\tpos[n+1/2])\bigl(\rem_{n+1/2} + \varphi_{1}(\sigma\tfrac{\tau}{2}  \mgFieldMatrix[])e^{\tn[n+1/2]\mgFieldMatrix[]}\vel[\perp](0)\bigr) + \mathcal{O}(\tau).
				\end{align}
				Finally, we insert \eqref{eq:remain-v-1-direct} into \eqref{eq:defectVel-tvel} and evaluate the expression in front of  $\rem_{n+1/2}$
				\begin{align*}
					\tfrac12 \int_0^1 (2\sigma-1) e^{\tau(1-\sigma)\mgFieldMatrix[]}d\sigma  
					- \tfrac14(\phifilter_+
					- \phifilter_-)&= \varphi_2(\tau\mgFieldMatrix[]) 
					- \tfrac12 \varphi_1(\tau\mgFieldMatrix[]) 
					- \tfrac14( \phifilter_+ -\phifilter_-) \\&= \Phi_1(\tau\mgFieldMatrix[]).
				\end{align*}
				Collecting the other terms in $\defectVel[n+1]'$ proves the claim.
			\end{proof}

			\subsection{Errors in parallel and perpendicular direction} \label{subsec:error-split}
			
			A crucial technique to analyze the error is to decompose it into contributions parallel and perpendicular to the magnetic field. Recall that the error in \eqref{eq:voc-discrete} splits into two terms: $\defecterr[n]{}$ resulting from the defects and a second part coming from the error in the evaluation of $\elField$. We start with $\defecterr[n]{}$.
			
			\begin{lemma}[Parallel error contribution]  \label{lem:error-par}
				Let Assumption~$\ref{ass:nonres}$ be satisfied. If the filter functions are chosen such that $\Phi_0(0) = \Phi_1(0) = \Psi_0(0) = \Psi_1(0) = 0$, then the $\propar$ projection of the defect contribution in \eqref{eq:voc-discrete} satisfies
				\begin{align*}
					\defecterr[\pos,n]{\|} = \begin{pmatrix}
						\propar & 0
					\end{pmatrix}\defecterr[n]{} = \mathcal{O}(\tau^2), \qquad \defecterr[\vel,n]{\|} = \begin{pmatrix}
						0 & \propar
					\end{pmatrix}\defecterr[n]{} = \mathcal{O}(\tau^2).
				\end{align*}
			\end{lemma}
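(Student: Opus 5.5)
We plan to apply the projection $\propar$ to the defect sum $\defecterr[n]{} = \tau \sum_{j=0}^{n-1} \systemMatrix{}^{n-j-1}\systemMatrix{-}^{-1}\defect[j+1]$. By Lemma~\ref{lem:systemmatrix-power} and \eqref{eq:matfun-project}, $\propar$ commutes with every matrix function of $\tau\mgFieldMatrix[]$, and $f(\tau\mgFieldMatrix[])\propar = f(0)\propar$. Since $\Upsilon_1=\varphi_1$ (Lemma~\ref{lem:psi-psi+exp}), \eqref{eq:funexpGauss-phi1} gives $\propar\tau\upsilonfilter_{m} = t_{m}\propar$ and $\propar e^{m\tau\mgFieldMatrix[]}=\propar$. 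Hence the parallel components are
\begin{equation*}
\defecterr[\vel,n]{\|} = \tau\sum_{j=0}^{n-1}\propar\defectVel[j+1],\qquad
\defecterr[\pos,n]{\|} = \tau\sum_{j=0}^{n-1}\propar\Bigl(\defectPos[j+1] + \tau\bigl(\tfrac12\psi_+(0)+ (n-j-1)\bigr)\defectVel[j+1]\Bigr).
\end{equation*}
The second sum carries the weight $\tau(n-j-1)\le T$. Therefore it suffices to show $\tau\sum_j\propar\defectVel[j+1]=\mathcal{O}(\tau^2)$, including the weighted version, and $\tau\sum_j\propar\defectPos[j+1]=\mathcal{O}(\tau^2)$.

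For the position defect we use Lemma~\ref{lem:defects-q}. After projection, $\propar\Psi_0(\tau\mgFieldMatrix[])=\Psi_0(0)\propar=0$ and $\propar\Psi_1(\tau\mgFieldMatrix[])=\Psi_1(0)\propar=0$. So $\propar\defectPos[j+1]=\mathcal{O}(\tau^2)$ pointwise, and summing $n\le T/\tau$ terms with the prefactor $\tau$ gives $\mathcal{O}(\tau^2)$.

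For the velocity defect we use Lemma~\ref{lem:defect-v-direct}. The term $\tau\Phi_1(\tau\mgFieldMatrix[])\elField'\rem$ is killed by $\propar$ because $\Phi_1(0)=0$, and the $\mathcal{O}(\tau^2)$ remainder sums harmlessly. The remaining term $\tau\propar\defectVel[j+1]'$ is only $\mathcal{O}(\tau)$ pointwise. Its structure is $\propar\, G(\tau\mgFieldMatrix[])\, e^{\tn[j+1/2]\mgFieldMatrix[]}\vel[\perp](0)$, where $G$ is a bounded matrix function built from $\elField'(\tpos[j+1/2])$. This term is \textbf{the main obstacle}: it does not vanish under $\propar$, since $\elField'$ does not commute with $\mgFieldMatrix[]$. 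We must exploit oscillation in $j$ instead.

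To handle it, we write $e^{\tn[j+1/2]\mgFieldMatrix[]}\vel[\perp](0) = e^{\tau\mgFieldMatrix[]/2}\,e^{j\tau\mgFieldMatrix[]}\properp\vel(0)$. The coefficient $A_j := \propar G_j e^{\tau\mgFieldMatrix[]/2}$ depends on $j$ only through $\tpos[j+1/2]$, so $A_{j+1}-A_j=\mathcal{O}(\tau)$ by Assumption~\ref{ass:efield-assump} ($\elField\in C^2$, $\dot\pos=\vel$ bounded). We then apply summation by parts with partial sums $\properp\funexpGauss_{m}(\tau\mgFieldMatrix[])=\sum_{k<m}e^{k\tau\mgFieldMatrix[]}\properp$. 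By Lemma~\ref{lem:rhon-bound}, whose hypotheses hold because Assumption~\ref{ass:nonres} and \eqref{eq:tauB-lower-bound} are in force, these sums are uniformly bounded. This gives
\begin{equation*}
\sum_{j=0}^{n-1} A_j e^{j\tau\mgFieldMatrix[]}\properp\vel(0) = A_{n-1}\funexpGauss_n\properp\vel(0) - \sum_{j=0}^{n-2}(A_{j+1}-A_j)\funexpGauss_{j+1}\properp\vel(0) = \mathcal{O}(1),
\end{equation*}
so $\tau^2\sum_j\propar\defectVel[j+1]'=\mathcal{O}(\tau^2)$. For the weighted sum in the position component, we include the factor $\tau(n-j-1)$ in $A_j$. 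Its increments are again $\mathcal{O}(\tau)$ and it stays bounded, so the same summation by parts applies. Combining the three estimates proves both bounds.
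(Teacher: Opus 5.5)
Your proposal is correct and follows the paper's proof essentially step by step. You project with $\propar$ so that $\tau\upsilonfilter_m$ reduces to $\tn[m]$, and you eliminate the $\Psi_0$, $\Psi_1$ and $\Phi_1$ terms through their values at $0$. The surviving term $\tau\propar\defectVel[j+1]'$ is then treated by summation by parts against the partial sums $\funexpGauss_{m}(\tau\mgFieldMatrix[])\properp$, which Lemma~\ref{lem:rhon-bound} bounds, with the weights $\tfrac{\tau}{2}\psi_+(0)+\tn[n-j-1]$ absorbed into the slowly varying coefficient.

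The differences are cosmetic. The paper evaluates the projected coefficient explicitly as $\propar\elField'(\tpos[j+1/2])e^{\tn[j]\mgFieldMatrix[]}(\varphi_2-\tfrac12\varphi_1)(\tau\mgFieldMatrix[])$, whereas you only use its boundedness. Also, you only need $\Upsilon_1(0)=1$, which follows from $\Psi_0(0)=0$, rather than the full identity $\Upsilon_1=\varphi_1$ of Lemma~\ref{lem:psi-psi+exp}.
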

			\begin{proof}
				We look at the position component first. Lemma~\ref{lem:systemmatrix-power}
				gives
				\begin{align}
					\defecterr[\pos,n]{\|} &= \propar \tau \sum_{j=0}^{n-1} \Bigl(\defectPos[j+1] + \bigl(\tfrac{\tau}{2}\psifilter_+ + \tau \upsilonfilter_{n-j-1}\bigr)\defectVel[j+1]\Bigr)
					\nonumber
					\\ 
					&= \tau \sum_{j=0}^{n-1} \bigl(\propar \defectPos[j+1] + (\tfrac{\tau}{2}\psi_+(0) + \tn[n-j-1])\propar \defectVel[j+1]\bigr).
					\label{eq:defectpos-par}
				\end{align}
				Using \eqref{eq:defectPos} and $\Psi_0(0) = \Psi_1(0) = 0$, we get $\propar \defectPos[j+1] = \mathcal{O}(\tau^2)$. For the velocity defect, we use the representation from Lemma~\ref{lem:defect-v-direct} and note that by \eqref{eq:defectVel-first} and \eqref{eq:phi1}, the assumptions $\Phi_0(0) = \Phi_1(0) = 0$ imply $\phi_\pm(0)=1$.
				This yields
				\begin{align} \label{eq:peak-order-func}
					\propar \defectVel[j+1]
					&= 	\propar \defectVel[j+1]' + \mathcal{O}(\tau^2) \nonumber \\ 
					&=\tau \propar\elField'(\tpos[j+1/2])\Bigl(\int_0^1  (\sigma-\tfrac12) 		
					\varphi_1\bigl( (\sigma-\tfrac12) \tau \mgFieldMatrix[]\bigr)d\sigma 
					\nonumber\\&\quad- \tfrac{1}{4} \bigl(
					\varphi_1(\tfrac{\tau}{2}\mgFieldMatrix[])
					- 
					\varphi_1(-\tfrac{\tau}{2}\mgFieldMatrix[])\bigr)
					\Bigr)
					e^{\tn[j+1/2]\mgFieldMatrix[]} \vel[\perp](0) +  \mathcal{O}(\tau^2),\nonumber
					\\
					&= \tau\propar\elField'(\tpos[j+1/2])  e^{\tn[j]\mgFieldMatrix[]} (\varphi_2(\tau\mgFieldMatrix[]) - \tfrac12\varphi_1(\tau\mgFieldMatrix[])) \vel[\perp](0) +  \mathcal{O}(\tau^2).
				\end{align}
				Since the projectors commute with the matrix functions, cf.\ \eqref{eq:matfun-project}, this yields
				\begin{equation}  \label{eq:sum-par}
					\begin{aligned}			
						\defecterr[\pos,n]{\|} &= 
						\tau^2 \sum_{j=0}^{n-1}  \alpha_j \beta_j (\varphi_2(\tau\mgFieldMatrix[]) - \tfrac12\varphi_1(\tau\mgFieldMatrix[])) \vel[\perp](0) +  \mathcal{O}(\tau^2),
						\\
						\alpha_j &= (\tfrac{\tau}{2}\psi_+(0) + \tn[n-j-1])	\propar\elField'(\tpos[j+1/2]), 
						\quad \beta_j = e^{j\tau\mgFieldMatrix[]} \properp	.	
					\end{aligned}
				\end{equation}
				Here, we apply summation by parts
				\begin{align*}
					\sum_{j=0}^{n-1}  \alpha_j \beta_j  =
					\alpha_{n-1} \sum_{j=0}^{n-1} \beta_j
					+ \sum_{j=0}^{n-2} (\alpha_j - \alpha_{j+1}) \sum_{k=0}^j \beta_k.
				\end{align*}          
				Using the definition of 
				$\funexpGauss_n$ in \eqref{eq:upsilonfilter} we have
				\begin{equation*}
					\sum_{k=0}^j \beta_j = \sum_{k=0}^j e^{k\tau\mgFieldMatrix[]} \properp= \funexpGauss_{j+1}(\tau\mgFieldMatrix[])\properp,
				\end{equation*}
				and conclude
				\begin{align*}
					\alpha_{j+1}-\alpha_j = \tau \propar\elField'(\tpos[j+3/2]) + (\tfrac{\tau}{2}\psi_+(0) + \tn[n-j-1])\propar\bigl(\elField'(\tpos[j+3/2]) - \elField'(\tpos[j+1/2])\bigr),
				\end{align*}
				where we can write
				\begin{align*}
					\elField'(\tpos[j+3/2]) - \elField'(\tpos[j+1/2])=
					\int_{t_j}^{t_{j+1}}\elField''\bigl(\pos(s+\tfrac{\tau}{2})\bigr)[\vel(s+\tfrac{\tau}{2}) ,
					\cdot] ds.
				\end{align*}
				As $E\in C^2$ on a $\normempty{\mgField[]}$ independent domain, we get $\alpha_{j+1}-\alpha_j = \mathcal{O}(\tau)$ and together with Lemma~\ref{lem:rhon-bound}, this implies with $\alpha_{n-1} = \tfrac{\tau}{2}\psi_+(0)
				\propar\elField'(\tpos[n-1/2])$
				\begin{equation}\label{eq:summ-by-parts}
						\tau^2 \sum_{j=0}^{n-1} \alpha_j\beta_j = \tau^2 \alpha_{n-1}\funexpGauss_n(\tau\mgFieldMatrix[]) \properp+ \tau^2\sum_{j=0}^{n-2}(\alpha_{j}-\alpha_{j+1})\funexpGauss_{j+1}(\tau\mgFieldMatrix[]) \properp= \mathcal{O}(\tau^2).
					\end{equation}
					Finally, employing $\normempty{\varphi_2(\tau\mgFieldMatrix[]) - \tfrac12\varphi_1(\tau\mgFieldMatrix[])}\leq 1$ in  \eqref{eq:sum-par} 
					proves the claim for the position component. The velocity component can be treated analogously, as
					\begin{align*}
						\defecterr[\vel,n]{\|} = \propar \tau \sum_{j=0}^{n-1} e^{(n-j-1)\tau\mgFieldMatrix[]}\defectVel[j+1]= \tau \sum_{j=0}^{n-1} \propar \defectVel[j+1]' 
					\end{align*}
					and we can thus repeat the argument above with $\alpha_j = \propar\elField'\bigl(\tpos[j+1/2]\bigr)$.
				\end{proof}

				\begin{lemma}\label{lem:error-all}
					Suppose that \eqref{eq:tauB-lower-bound} holds and that the filter functions in Tables~$\ref{tab:phipmPhi01}$ and $\ref{tab:psichipmPsi01}$ are chosen such that $\Phi_0=\Psi_0=0$ and $\phifilter_\pm$ bounded. Then the defect contribution of the perpendicular part of the position error in \eqref{eq:voc-discrete} is given by
					\begin{align} \label{eq:errqperp}
						\defecterr[\pos,n]{\perp} = \begin{pmatrix}
							\properp & 0
						\end{pmatrix}\defecterr[n]{} = {\tau^2} \properp \psifilter_1 \sum_{j=0}^{n-1} \telField[j+1/2] + \mathcal{O}(\tau^2).
					\end{align}
					Hence, for $\Psi_1 = 0$, we get $\defecterr[\pos,n]{\perp} = \mathcal{O}(\tau^2)$.
				\end{lemma}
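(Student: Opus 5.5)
The plan is to write out the perpendicular position component of $\defecterr[n]{}$ explicitly using Lemma~\ref{lem:systemmatrix-power}. Then I insert the defect expansions of Lemmas~\ref{lem:defects-q} and~\ref{lem:defects-v} and bound each resulting sum, using that on the range of $\properp$ the filter $\upsilonfilter_m$ is uniformly bounded under \eqref{eq:tauB-lower-bound}. Here $\psifilter_1$ is read as $\Psi_1(\tau\mgFieldMatrix[])$. By \eqref{eq:voc-discrete} and \eqref{eq:systemmat}, and since $\properp$ commutes with all matrix functions of $\tau\mgFieldMatrix[]$ by \eqref{eq:matfun-project},
\begin{equation*}
\defecterr[\pos,n]{\perp} = \tau \sum_{j=0}^{n-1} \properp\Bigl(\defectPos[j+1] + \tau\bigl(\tfrac12 \psifilter_+ + \upsilonfilter_{n-j-1}\bigr)\defectVel[j+1]\Bigr).
\end{equation*}

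\emph{Position defects.} Since $\Psi_0=0$, Lemma~\ref{lem:defects-q} gives $\defectPos[j+1] = \tau\Psi_1(\tau\mgFieldMatrix[])\telField[j+1/2] + \mathcal{O}(\tau^2)$. The remainder is bounded by $\tau^2(\frac13+\frac14\normempty{\psifilter_+}+\frac14\normempty{\psifilter_-})\bdelFieldJac[n]{1}$. This constant is independent of $\normempty{\mgField[]}$, since the filters in Table~\ref{tab:psichipmPsi01} are bounded on $i\mathbb{R}$ and $\bdelFieldJac[n]{1}\le \max\normempty{\elField'}\max\normempty{\vel}$ on $K_{\pos}\times K_{\vel}$. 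Multiplying by $\tau$ and summing over $n\le T/\tau$ terms, the remainders contribute $\mathcal{O}(\tau^2)$. The leading part produces exactly $\tau^2\properp\psifilter_1\sum_{j}\telField[j+1/2]$.

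\emph{Velocity defects.} Because $\Phi_0=0$ and $\phifilter_\pm$ are bounded, Lemma~\ref{lem:defects-v} yields $\normempty{\defectVel[j+1]}\le C\tau$ with $C$ independent of $\normempty{\mgField[]}$. The term with $\tfrac12\psifilter_+$ is therefore $\tau^2\sum_j\mathcal{O}(\tau)=\mathcal{O}(\tau^2)$.

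\emph{The $\upsilonfilter$-term.} This is the main step. The crude bound $\normempty{\tau\upsilonfilter_m}\le t_m$ from \eqref{eq:upsfilterbound} would only give $\mathcal{O}(\tau)$, so I will use the perpendicular projection. By Lemma~\ref{lem:psi-psi+exp} and \eqref{eq:funexpGauss-phi1}, $\Upsilon_m(z)=\varphi_1(z)\funexpGauss_m(z)=(e^{mz}-1)/z$. On the eigenvalues $z=\pm i\tau\normempty{\mgField[]}$ of the perpendicular block this gives
\begin{equation*}
\normempty{\properp \upsilonfilter_m} \le \frac{2}{\tau\normempty{\mgField[]}} \le \frac{2}{\vartheta_0}
\end{equation*}
by \eqref{eq:tauB-lower-bound}, uniformly in $m$ and without any nonresonance assumption. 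Hence $\tau\sum_j\tau\normempty{\properp\upsilonfilter_{n-j-1}}\normempty{\defectVel[j+1]}\le \tau^2 n\,(2/\vartheta_0)\,C\tau = \mathcal{O}(\tau^2)$.

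Collecting the three contributions proves \eqref{eq:errqperp}, and the case $\Psi_1=0$ is then immediate. The only delicate point is making sure the identity $\Upsilon_1=\varphi_1$ (i.e.\ $\Psi_0=0$) holds for all listed filters, which Lemma~\ref{lem:psi-psi+exp} provides.
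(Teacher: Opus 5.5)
Your proposal is correct and follows the paper's proof essentially step by step. You expand the first block row of $\systemMatrix[n-j-1]{}\systemMatrix{-}^{-1}$, use $\Psi_0=0$ in Lemma~\ref{lem:defects-q}, get $\defectVel[j+1]=\mathcal{O}(\tau)$ from $\Phi_0=0$ and bounded $\phifilter_\pm$, and control the $\upsilonfilter$-term on the range of $\properp$ by $\abs{\Upsilon_m(ix)}=\abs{(e^{imx}-1)/(ix)}\le 2/\vartheta_0$, exactly as in \eqref{eq:pos-perp-decay}; the additional bound $\abs{\psi_+(ix)}\le\min\{1,4/\abs{x}\}$ that the paper records there is not needed for the $\mathcal{O}(\tau^2)$ claim and only serves later to explain the decay of the perpendicular position error in $\tau\normempty{\mgField[]}$.
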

				\begin{proof}
					With Lemma~\ref{lem:systemmatrix-power}, $\psifilter_0 = 0$, and \eqref{eq:defectPos} we can write 
					\begin{align*}
						\defecterr[\pos,n]{\perp} &= \properp \tau \sum_{j=0}^{n-1} \bigl(\defectPos[j+1] + \tau \bigl(\tfrac12\psifilter_+ +  \upsilonfilter_{n-j-1}\bigr)\defectVel[j+1]\bigr)\\
						& = \tau^2  \properp\sum_{j=0}^{n-1} \bigl(\psifilter_1 \telField[j+1/2] + \bigl(\tfrac12\psifilter_+ +  \upsilonfilter_{n-j-1}\bigr)\defectVel[j+1]\bigr)+ \mathcal{O}(\tau^2)
					\end{align*}
					From Lemma~\ref{lem:psi-psi+exp},  \eqref{eq:tauB-lower-bound}, and Table~\ref{tab:psichipmPsi01} we have for $x = \pm\tau\normempty{\mgField[]}$
					\begin{equation} \label{eq:pos-perp-decay}
						\abs{\Upsilon_n (ix)} = \abs{\varphi_1(ix) \funexpGauss_n(ix)}
						= \bigl|\frac{e^{inx}-1}{ix}\bigr|
						\leq \frac{2}{\abs{x}} \leq \frac{2}{\vartheta_0}
						\quad \text{and} \quad
						\abs{\psi_+(ix)} \leq \min\{ 1, \tfrac{4}{\abs{x}}\} .
					\end{equation}
					Together with \eqref{eq:defectVel-all}, $\Phi_0=0$, and the bound on the $\phifilter_\pm$, we obtain $\properp\bigl(\tfrac12\psifilter_+ +  \upsilonfilter_{n-j-1}\bigr)\defectVel[j+1] = \mathcal{O}(\tau)$, which gives \eqref{eq:errqperp}.
				\end{proof}
				
				\begin{theorem} \label{thm:second-order}
					Let Assumptions~$\ref{ass:nonres}$ and $\ref{ass:efield-assump}$ hold and assume that the filter functions are chosen such that $\Phi_0 = \Psi_0=\Psi_1=\Phi_1(0)=0$ and that a constant $C_{\auxMatrix}$ exists such that  $\normempty{\auxMatrix} \leq C_{\auxMatrix}$.
					Moreover, let $\lipschitzelField$ be the Lipschitz constant of $\elField$ and assume $\tau$ be sufficiently small s.t. $2\tau (1+T) \lipschitzelField C_{\auxMatrix} < 1$. Then for $0\leq \tn[n]\leq T$ the error 
					of the approximation \eqref{eq:gen-expint} to the solution of \eqref{eq:eul-lag-filt-all} satisfies
					\begin{equation} \label{eq:second-order}
						\normempty{\errorPos[n]} = \mathcal{O}(\tau^2), \qquad \normempty{\propar\errorVel[n]} = \mathcal{O}(\tau^2), \quad \text{and} \quad \normempty{\properp\errorVel[n]} = \mathcal{O}(\tau).
					\end{equation}
				\end{theorem}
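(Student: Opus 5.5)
We start from the discrete variation-of-constants representation \eqref{eq:voc-discrete} with $\error[0]=0$. We split $\error[n]$ into the defect contribution $\defecterr[n]{}$ and the propagated electric-field error $\tau\sum_{j}\systemMatrix[n-j-1]{}\systemMatrix{-}^{-1}\auxMatrix\errorelFieldVec[j]$, and project both onto the parallel and perpendicular directions by $\propar,\properp$.

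\emph{Step 1: defect contributions.} First, $\Phi_0=\Psi_0=\Psi_1=0$ implies $\Phi_0(0)=\Psi_0(0)=\Psi_1(0)=0$, and $\Phi_1(0)=0$ holds by assumption. So Lemma~\ref{lem:error-par} applies, and gives $\defecterr[\pos,n]{\|}=\mathcal{O}(\tau^2)$ and $\defecterr[\vel,n]{\|}=\mathcal{O}(\tau^2)$. This step uses the nonresonance Assumption~\ref{ass:nonres}. Next, Lemma~\ref{lem:error-all} with $\Psi_1=0$ gives $\defecterr[\pos,n]{\perp}=\mathcal{O}(\tau^2)$. Its hypothesis that $\phifilter_\pm$ is bounded follows from $\normempty{\auxMatrix}\le C_{\auxMatrix}$. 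We may assume \eqref{eq:tauB-lower-bound}; otherwise a standard nonstiff argument applies. Finally, for the perpendicular velocity contribution we only need first order. Theorem~\ref{thm:power-bound} and Lemma~\ref{lem:defects-v} give $\normempty{\defectVel[j+1]}=\mathcal{O}(\tau)$, so $\defecterr[\vel,n]{\perp}=\tau\sum_j e^{(n-j-1)\tau\mgFieldMatrix[]}\properp\defectVel[j+1]=\mathcal{O}(\tau)$.

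\emph{Step 2: propagated field error.} We use the structure of $\systemMatrix[m]{}\systemMatrix{-}^{-1}$ from Lemma~\ref{lem:systemmatrix-power}. The position component of the $j$-th term is
\[
\tau\bigl(\tfrac{\tau}{4}(\chifilter_-\errorelField[j]+\chifilter_+\errorelField[j+1])+\tfrac12(\tfrac12\psifilter_++\upsilonfilter_{n-j-1})(\phifilter_-\errorelField[j]+\phifilter_+\errorelField[j+1])\bigr).
\]
The velocity component is $\tfrac{\tau}{2}e^{(n-j-1)\tau\mgFieldMatrix[]}(\phifilter_-\errorelField[j]+\phifilter_+\errorelField[j+1])$. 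Since $\normempty{\tau\upsilonfilter_m}\le t_m\le T$ by \eqref{eq:upsfilterbound} and $\normempty{\auxMatrix}\le C_{\auxMatrix}$, each term is bounded by $\tau(1+T)C_{\auxMatrix}\lipschitzelField(\normempty{\errorPos[j]}+\normempty{\errorPos[j+1]})$. Crucially, the right-hand side contains only position errors, because $\elField$ depends only on $\pos$.

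\emph{Step 3: Gronwall.} From Steps 1 and 2,
\[
\normempty{\errorPos[n]}\le C\tau^2+2\tau(1+T)C_{\auxMatrix}\lipschitzelField\sum_{j=0}^{n}\normempty{\errorPos[j]}.
\]
The $j=n$ term can be absorbed on the left thanks to $2\tau(1+T)\lipschitzelField C_{\auxMatrix}<1$. A discrete Gronwall lemma then gives $\normempty{\errorPos[n]}=\mathcal{O}(\tau^2)$. Inserting this into the velocity components finishes the proof. For $\propar\errorVel[n]$, the parallel defect part is $\mathcal{O}(\tau^2)$ and the field part is at most $\tau\sum_j C\,\mathcal{O}(\tau^2)=\mathcal{O}(\tau^2)$. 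For $\properp\errorVel[n]$, the defect part contributes the $\mathcal{O}(\tau)$ bound and the field part is again $\mathcal{O}(\tau^2)$.

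\emph{Main obstacle.} The delicate point is that the position error is genuinely coupled to the perpendicular velocity defect, which is only $\mathcal{O}(\tau)$. One must check that its contribution to $\errorPos[n]$ is second order. In the perpendicular direction this comes from the decay $\abs{\Upsilon_n(ix)}\le 2/\vartheta_0$ in Lemma~\ref{lem:error-all}. In the parallel direction it comes from summation by parts with the nonresonance bound of Lemma~\ref{lem:rhon-bound}, as in Lemma~\ref{lem:error-par}. Since both lemmas are available, the remaining work is to verify their hypotheses carefully, in particular $\phi_\pm(0)=1$ and bounded $\phifilter_\pm$, and to handle the implicit term $j=n$ in the Gronwall step.
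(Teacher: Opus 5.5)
Your proposal is correct and is essentially the paper's proof: the discrete variation-of-constants formula, Lemmas~\ref{lem:error-par} and~\ref{lem:error-all} to make the position defect contribution $\mathcal{O}(\tau^2)$, the same Gronwall inequality with the $j=n$ term absorbed via $2\tau(1+T)\lipschitzelField C_{\auxMatrix}<1$, and then the velocity components, where your direct $\mathcal{O}(\tau)$ bound for $\properp\errorVel[n]$ just repeats the argument of Theorem~\ref{thm:error-bound} that the paper cites. One small slip: in Step~2 the factor $\tfrac12(\tfrac12\psifilter_+ + \upsilonfilter_{n-j-1})$ should be $\tfrac{\tau}{2}(\tfrac12\psifilter_+ + \upsilonfilter_{n-j-1})$, which is what your subsequent bound via $\normempty{\tau\upsilonfilter_m}\le t_m$ actually uses.
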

				\begin{proof}
					From the error recursion \eqref{eq:voc-discrete} and Lemma~\ref{lem:systemmatrix-power} we have
					\begin{equation*}
						\normempty{\errorPos[n]} \leq 2 \tau \lipschitzelField C_{\auxMatrix} (1+T) \sum_{j=0}^{n}\normempty{\errorPos[j]} + \normempty{\defecterr[\pos,n]{}}.
					\end{equation*}
					Lemmas~\ref{lem:error-par} and \ref{lem:error-all} yield $\normempty{\defecterr[\pos,n]{}} = \mathcal{O}(\tau^2)$. For the parallel velocity error the statements follows similarly from the bound on the position error and Lemma~\ref{lem:error-par}. In Theorem~\ref{thm:error-bound}, we proved $\normempty{\errorVel[n]}= \mathcal{O}(\tau)$, hence this bound also holds for $\normempty{\properp\errorVel[n]}$.
				\end{proof}
				For the filter functions \eqref{eq:HLW-all}, Theorem~\ref{thm:second-order} contains \cite[Thm.~3.1]{HaiLW20} as a special case for a constant magnetic field. The second-order bound proven there
				requires a stepsize restriction coupling $\tau$ to the inverse strength of the
				magnetic field, i.e., $\tau \normempty{\mgField} \lesssim 1$. In contrast, our smallness
				condition $2\tau(1+T)\lipschitzelField C_\auxMatrix < 1$ only depends on the filter functions $\chi_\pm$, $\phi_\pm$ evaluated at $\tau\mgFieldMatrix[]$. For the choices 
				\eqref{eq:voc-v-update-full=midpoint}, \eqref{eq:voc-v-update-full=trap}, and all choices in Table~\ref{tab:psichipmPsi01}, $C_\auxMatrix$ is independent of $\mgField$, cf.~Corollary~\ref{cor:CS-bound}.
				In contrast, for $\phi_\pm$ given in \eqref{eq:voc-v-update-full=HLW}, the
				magnetic field enters via the nonresonance Assumption~\ref{ass:nonres}, which  bounds $C_\auxMatrix$ in terms of $\nonres$ by Lemma~\ref{lem:rhon-bound}. Since the constants in \eqref{eq:second-order} are independent of $\normempty{\mgField}$, Theorem~\ref{thm:second-order} supplies error bounds uniformly in the strength of the magnetic field, in
				particular in the large-stepsize regime $\tau \normempty{\mgField} \gg 1$.
				
				\subsection{Detailed investigation of the velocity error} \label{subsec:leading-terms}
				
				In this section we investigate the perpendicular part of velocity error, which is only of first order by Theorem~\ref{thm:second-order} in order to fully understand the error behavior for large $\tau\normempty{\mgField}$. By \eqref{eq:voc-discrete}, we have
				\begin{equation*}
					\errorVel[n] = \tfrac{\tau}{2} \sum_{j=0}^{n-1} e^{t_{n-j-1} \mgFieldMatrix[]}
					\bigl( \phifilter_- \errorelField[j] + \phifilter_+ \errorelField[j+1]  \bigr)
					+ \defecterr[\vel,n]{}.
				\end{equation*} 
				The first term is in $\mathcal{O}(\tau^2)$ since $\elField$ is Lipschitz and ${\errorPos[n]} = \mathcal{O}(\tau^2)$ and so $	\defecterr[\vel,n]{\|}$ . Hence, by \eqref{eq:defect-v-direct}, we need to study the perpendicular part of
				\begin{equation}  \label{eq:Wn}
					\begin{aligned}
						\defecterr[\vel,n]{} 
						&= W_n +  \tau^2 \sum_{j=0}^{n-1} e^{t_{n-j-1} \mgFieldMatrix[]} \defectVel[j+1]' + \mathcal{O}(\tau^2),
						\\ 
						W_n &= \tau^2 \Phi_1(\tau\mgFieldMatrix[])\sum_{j=0}^{n-1} e^{\tn[n-j-1]\mgFieldMatrix[]} \elField'(\tpos[j+1/2]) \rem_{j+1/2}.	
					\end{aligned}
				\end{equation}
				First we show that $W_n= \mathcal{O}(\tau^2)$, so that the dominant term is second one.
				\begin{lemma}[Error contribution of $W_n$] \label{lem:boundWn}
					Suppose that the filter functions are chosen such that $\Phi_0 = 0$ and $\Phi_1(0) = 0$ hold in \eqref{eq:defectVel-first} and \eqref{eq:phi1}, respectively and that the nonresonance Assumption $\ref{ass:nonres}$ holds. Then we have
					\begin{equation*}
						\propar W_n = 0 \quad \text{and} \quad \properp W_n = \mathcal{O}(\tau^2).
					\end{equation*}
				\end{lemma}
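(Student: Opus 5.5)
The parallel part is immediate. Since $\Phi_1(\tau\mgFieldMatrix[])$ is a matrix function of $\tau\mgFieldMatrix[]$, \eqref{eq:matfun-project} gives $\propar\Phi_1(\tau\mgFieldMatrix[]) = \Phi_1(0)\propar = 0$ by the assumption $\Phi_1(0)=0$. Hence $\propar W_n = 0$.

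For the perpendicular part we need a different mechanism. The factor $\properp\Phi_1(\tau\mgFieldMatrix[])$ is bounded, because $\phi_\pm$ are bounded and $\varphi_{1,2}$ are bounded on $i\mathbb{R}$. Each summand $\elField'(\tpos[j+1/2])\rem_{j+1/2}$ is $\mathcal{O}(1)$, since $\rem$ is a difference of bounded velocities. The trivial estimate therefore gives only $\mathcal{O}(\tau)$, and one power of $\tau$ must be gained by summation by parts, exactly as in the proof of Lemma~\ref{lem:error-par}.

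Write $e^{\tn[n-j-1]\mgFieldMatrix[]}\properp = e^{\tn[n-1]\mgFieldMatrix[]}e^{-j\tau\mgFieldMatrix[]}\properp$. The factor $e^{\tn[n-1]\mgFieldMatrix[]}$ is unitary and commutes with $\properp$ and $\Phi_1$, so it can be pulled out. We are left with
\[
\tau^2\properp\Phi_1(\tau\mgFieldMatrix[])\sum_{j=0}^{n-1}\beta_j\alpha_j,
\qquad \beta_j = e^{-j\tau\mgFieldMatrix[]}\properp,\quad \alpha_j = \elField'(\tpos[j+1/2])\rem_{j+1/2}.
\]
Summation by parts yields $\sum_j\beta_j\alpha_j = \bigl(\sum_{k\le n-1}\beta_k\bigr)\alpha_{n-1} + \sum_{j\le n-2}\bigl(\sum_{k\le j}\beta_k\bigr)(\alpha_j-\alpha_{j+1})$. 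The partial sums $\sum_{k\le j}\beta_k = \funexpGauss_{j+1}(-\tau\mgFieldMatrix[])\properp$ are uniformly bounded by Lemma~\ref{lem:rhon-bound}. This uses the nonresonance Assumption~\ref{ass:nonres} and \eqref{eq:tauB-lower-bound}; the bound for $-\tau\mgFieldMatrix[]$ is the same because the spectrum is symmetric. The boundary term thus contributes $\mathcal{O}(\tau^2)$. It then suffices to show $\alpha_{j+1}-\alpha_j = \mathcal{O}(\tau)$, so that the remaining sum is $\tau^2\cdot n\cdot\mathcal{O}(\tau) = \mathcal{O}(\tau^2)$.

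The main obstacle is this difference estimate, because $\tvel$ itself oscillates with frequency $\normempty{\mgField[]}$ and its increments are not $\mathcal{O}(\tau)$. This is precisely why $\rem$ subtracts the free gyration. By the variation-of-constants representation in the proof of Lemma~\ref{lem:defect-v-direct},
\[
\rem(t) = \vel(t)-e^{t\mgFieldMatrix[]}\vel[\perp](0) = \propar\vel(t) + \int_0^t e^{(t-s)\mgFieldMatrix[]}\properp\telField(s)\,ds.
\]
The derivative of $\propar\vel$ is $\propar\telField = \mathcal{O}(1)$. For the integral term, the integration by parts already carried out there shows that it equals $\mgFieldMatrix[]^{-1}$-type terms plus $\mathcal{O}(1)$ quantities. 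More directly, its derivative is $\properp\telField(t) + \mgFieldMatrix[]\int_0^t\cdots$. Integrating by parts once more, that is, using the identity for $\mgFieldMatrix[]\vel[\perp](t)$ from that proof, gives
\[
\tfrac{d}{dt}\int_0^t e^{(t-s)\mgFieldMatrix[]}\properp\telField(s)\,ds = e^{t\mgFieldMatrix[]}\properp\telField(0) + \int_0^t e^{(t-s)\mgFieldMatrix[]}\properp\telFieldJac{1}(s)\,ds = \mathcal{O}(1 + t),
\]
which is uniform in $\normempty{\mgField[]}$ since $\elField\in C^1$ and $\vel$ is bounded. Hence $\rem_{j+3/2}-\rem_{j+1/2} = \mathcal{O}(\tau)$. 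Combined with $\elField'(\tpos[j+3/2])-\elField'(\tpos[j+1/2]) = \mathcal{O}(\tau)$ (from $\elField\in C^2$ and bounded $\vel$) and boundedness of $\rem$, this gives $\alpha_{j+1}-\alpha_j = \mathcal{O}(\tau)$. Therefore $\properp W_n = \mathcal{O}(\tau^2)$.
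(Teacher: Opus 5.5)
Your proposal is correct and follows essentially the same route as the paper: $\Phi_1(0)=0$ eliminates the parallel part. For the perpendicular part you use summation by parts with partial sums bounded via Lemma~\ref{lem:rhon-bound}, together with the variation-of-constants representation of $\rem$ and an integration by parts to obtain $\alpha_{j+1}-\alpha_j=\mathcal{O}(\tau)$. The only cosmetic difference is that you bound $\dot{\rem}$ directly, whereas the paper estimates the discrete difference through the factor $\Id-e^{\tau\mgFieldMatrix[]}=-\tau\mgFieldMatrix[]\varphi_1(\tau\mgFieldMatrix[])$ before integrating by parts; both rest on the same identity for $\mgFieldMatrix[]\int_0^t e^{(t-s)\mgFieldMatrix[]}\properp\telField(s)\,ds$.
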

				To be more precise, the first equation only requires $\Phi_1(0)=0$ but not Assumption~\ref{ass:nonres}, while for the second, it is the other way around.
				\begin{proof}
					The first identity follows immediately from $\Phi_1(0) = 0$ and \eqref{eq:Wn}.
					
					For the orthogonal projection, we write
					\begin{equation*}
						\properp W_n = \tau^2 \properp \Phi_1(\tau\mgFieldMatrix[]) \sum_{j=0}^{n-1} \beta_j \alpha_j,
						\qquad
						\alpha_j = \elField'(\tpos[j+1/2])\rem_{j+1/2}, \quad \beta_j = e^{\tn[n-j-1]\mgFieldMatrix[]}.
					\end{equation*}
					Summation by parts implies 
					\begin{align}  \label{eq:sum-by-parts}
						\sum_{j=0}^{n-1} \beta_j \alpha_j  &=
						\sum_{j=0}^{n-1} \beta_j \alpha_{n-1} 
						+ \sum_{j=0}^{n-2} \sum_{k=0}^j \beta_k (\alpha_j - \alpha_{j+1}) \\
						& = \funexpGauss_{n}(\tau \mgFieldMatrix[]) \alpha_{n-1} 
						+ e^{\tn[n-1]\mgFieldMatrix[]}\sum_{j=0}^{n-2} \funexpGauss_{j}(-\tau \mgFieldMatrix[]) (\alpha_j - \alpha_{j+1}),
						\nonumber
					\end{align}          
					where we used \eqref{eq:upsilonfilter} in the last identity.
					Next, we apply the variation-of-constants formula \eqref{eq:exact-q-v-voc-b} which gives
					\begin{align*}
						\rem_{j+1/2} &= \tvel[j+1/2] - e^{\tn[j+1/2]\mgFieldMatrix[]}\vel[\perp](0) \\
						&= e^{\tn[j+1/2]\mgFieldMatrix[]}\vel[\|](0) + \int_0^{\tn[j+1/2]} e^{(\tn[j+1/2]-s)\mgFieldMatrix[]}\telField(s)ds \\& = \vel[\|](0) + \int_0^{\tn[j+1/2]} e^{(\tn[j+1/2]-s)\mgFieldMatrix[]}\telField(s)ds.
					\end{align*}
					From this identity we conclude 
					\begin{align*}
						\alpha_{j} - \alpha_{j+1} &= \elField'(\tpos[j+1/2])\Bigl(\int_0^{\tn[j+1/2]} e^{(\tn[j+1/2]-s)\mgFieldMatrix[]}\telField(s)ds - \int_0^{\tn[j+3/2]} e^{(\tn[j+3/2]-s)\mgFieldMatrix[]}\telField(s)ds\Bigr) \\ 
						& \quad +  \bigl(\elField'(\tpos[j+1/2]) - \elField'(\tpos[j+3/2])\bigr)\rem_{j+3/2} \\
						& = \elField'(\tpos[j+1/2])
						(\Id - e^{\tau\mgFieldMatrix[]})\int_0^{\tn[j+1/2]} e^{(\tn[j+1/2]-s)\mgFieldMatrix[]}\telField(s)ds
						+ \mathcal{O}(\tau),
					\end{align*}
					since $E\in C^2$ on a $\normempty{\mgField[]}$ independent domain and $\rem_{j+3/2}$ is bounded.
					Hence, we get from yet another integration by parts
					\begin{align*}
						\alpha_{j} - \alpha_{j+1} 
						&=  \tau\elField'(\tpos[j+1/2])\varphi_{1}(\tau\mgFieldMatrix[])e^{\tn[j+1/2]\mgFieldMatrix[]}\int_0^{\tn[j+1/2]} (-\mgFieldMatrix[])e^{-s\mgFieldMatrix[]}\telField(s)ds + \mathcal{O}(\tau)\\
						&=
						\tau\elField'(\tpos[j+1/2])\varphi_{1}(\tau\mgFieldMatrix[])e^{\tn[j+1/2]\mgFieldMatrix[]}
						\Bigl(  e^{-\tn[j+1/2]\mgFieldMatrix[]} \telFieldJac{1}(\tn[j+1/2]) \\&\quad-
						\telFieldJac{1}(0) - \int_0^{\tn[j+1/2]} e^{-s\mgFieldMatrix[]}\telFieldJac{1}(s)ds 
						\Bigr)+ \mathcal{O}(\tau)\\
						&= \mathcal{O}(\tau).
					\end{align*}
					The claim now follows from nonresonance condition which ensures \eqref{eq:funexpgauss-bound}.	
				\end{proof}
				
				In order to determine the leading error term we make use of the diagonalization \eqref{eq:mgmatrix-diag} of $\mgFieldMatrix[]$.
				
				\begin{lemma}[Perpendicular velocity error] \label{lem:perp-vel-err}
					Suppose that the filter functions defined in 
					\eqref{eq:defectVel-first} and 	\eqref{eq:phi1} are such that $\Phi_0 = 0$ and $\Phi_1(0)=0$. Furthermore, assume that a nonresonance condition $\bigl|\tau \normempty{\mgField[]} - \pi k \bigr| \geq \nonres_2> 0$ holds for all $k\in \mathbb{Z}\setminus\{0\}$. Then the defect contribution of the perpendicular part of the velocity error in \eqref{eq:voc-discrete} behaves like 
					\begin{subequations}\label{eq:Phiperp-all}
						\begin{align}
							\defecterr[\vel,n]{\perp}  &= \begin{pmatrix}
								0 &\properp 
							\end{pmatrix}\defecterr[n]{}  = \tau\Phi_\perp(\tau\mgFieldMatrix[])e^{\tn[n] \mgFieldMatrix[]} D_{E}\vel[\perp](0) + \mathcal{O}(\tau^2),
							\label{eq:Phiperp-a}
							\intertext{with}
							D_{E} &= \tau\sum_{j=0}^{n-1} U_\perp \mathrm{diag}\bigl(U_\perp^* \elField'(\tpos[j+1/2]) U_\perp\bigr) U_\perp^*, \label{eq:Phiperp-b}\\ \Phi_\perp(z) &= \frac{1}{z}\bigl(1-\sinch(\tfrac{z}{2})\bigr) - \tfrac{1}{2} \varphi_1(-z)\bigl(\phi_+(z) - \varphi_{1}(\tfrac{z}{2})\bigr),\label{eq:Phiperp-c}
						\end{align}
					\end{subequations}
					where $\mathrm{diag}$ denotes the diagonal of a matrix and $U_\perp$ was defined in \eqref{eq:mgmatrix-diag}.
				\end{lemma}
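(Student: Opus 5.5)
Starting from \eqref{eq:Wn}, Lemma~\ref{lem:boundWn} gives $\properp W_n = \mathcal{O}(\tau^2)$. The remaining task is therefore the perpendicular part of $\tau^2\sum_{j=0}^{n-1} e^{\tn[n-j-1]\mgFieldMatrix[]}\defectVel[j+1]'$. By \eqref{eq:defect-v-strich}, $\defectVel[j+1]' = G_j\, e^{\tn[j+1/2]\mgFieldMatrix[]}\vel[\perp](0)$ with
\[
G_j = \int_0^1(\sigma-\tfrac12)e^{\tau(1-\sigma)\mgFieldMatrix[]}A_j\varphi_1\bigl((\sigma-\tfrac12)\tau\mgFieldMatrix[]\bigr)d\sigma
- \tfrac14\bigl(\phifilter_+A_j\varphi_1(\tfrac{\tau}{2}\mgFieldMatrix[]) - \phifilter_-A_j\varphi_1(-\tfrac{\tau}{2}\mgFieldMatrix[])\bigr),
\qquad A_j=\elField'(\tpos[j+1/2]).
\]
I work in the eigenbasis \eqref{eq:mgmatrix-diag}: write $A_j$ in the basis $U$ and split it into entries $(a_j)_{k\ell}$ with eigenvalues $\lambda_k,\lambda_\ell\in\{0,\pm i\normempty{\mgField[]}\}$. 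Since $\vel[\perp](0)$ lies in the range of $U_\perp$, only $\ell\in\{1,2\}$ enters, and after $\properp$ only $k\in\{1,2\}$ survives. Each summand becomes the scalar $(a_j)_{k\ell}\,g(\tau\lambda_k,\tau\lambda_\ell)$ times the oscillating factor $e^{\tn[n-j-1]\lambda_k}e^{\tn[j+1/2]\lambda_\ell}$, where $g$ is an explicit bounded scalar function obtained from the $\sigma$-integral and the $\phi_\pm$ terms.

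The sum then splits into two kinds of terms.

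\emph{Off-diagonal pairs $k\neq\ell$.} Here $\lambda_\ell=-\lambda_k$, and the phase $e^{j\tau(\lambda_\ell-\lambda_k)}=e^{\mp 2ij\tau\normempty{\mgField[]}}$ oscillates in $j$. I apply summation by parts exactly as in \eqref{eq:summ-by-parts} and Lemma~\ref{lem:boundWn}, with $\alpha_j=(a_j)_{k\ell}$. Because $\elField\in C^2$ and $\vel$ is bounded, $\alpha_{j+1}-\alpha_j=\mathcal{O}(\tau)$. The partial sums are geometric sums $\funexpGauss_{j+1}(\pm 2i\tau\normempty{\mgField[]})$, which stay bounded by the analogue of Lemma~\ref{lem:rhon-bound} precisely under the assumed condition $|\tau\normempty{\mgField[]}-\pi k|\ge\nonres_2$. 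The lower bound \eqref{eq:tauB-lower-bound} covers $k=0$. With the prefactor $\tau^2$, these terms contribute $\mathcal{O}(\tau^2)$.

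\emph{Diagonal pairs $k=\ell$.} The phase is $e^{\tn[n-1/2]\lambda_k}$, independent of $j$, so the sum reassembles to $\tau\, g(\tau\lambda_k,\tau\lambda_k)\, e^{\tn[n]\lambda_k}$ times the $k$-th diagonal entry of $D_E$, with an extra factor $e^{-\tau\lambda_k/2}$. This produces \eqref{eq:Phiperp-a} with $\Phi_\perp(z)=e^{-z/2}g(z,z)$.

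The remaining step is to verify that $e^{-z/2}g(z,z)$ equals \eqref{eq:Phiperp-c}. I split it into its two parts.

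\emph{The integral term.} Since $A_j$ is effectively scalar on the diagonal, matrix functions commute, and one computes
\[
e^{-z/2}\int_0^1(\sigma-\tfrac12)e^{(1-\sigma)z}\varphi_1\bigl((\sigma-\tfrac12)z\bigr)\,d\sigma .
\]
Using $s\varphi_1(sz)=(e^{sz}-1)/z$, this becomes $\frac1z\int_0^1\bigl(e^{z/2}-e^{(1-\sigma)z}\bigr)d\sigma\,e^{-z/2}=\frac1z\bigl(1-e^{-z/2}\varphi_1(z)\bigr)=\frac1z\bigl(1-\sinch(\tfrac z2)\bigr)$ by \eqref{eq:id-sinch-exp}.

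\emph{The filter term.} It reads
\[
-\tfrac14 e^{-z/2}\bigl(\phi_+(z)\varphi_1(\tfrac z2)-\phi_-(z)\varphi_1(-\tfrac z2)\bigr).
\]
I eliminate $\phi_-$ using $\Phi_0=0$, i.e.\ $\phi_-=2\varphi_1-\phi_+$, and then apply the standard identities $\varphi_1(\tfrac z2)+\varphi_1(-\tfrac z2)=2\sinch(\tfrac z2)$ and $e^{-z/2}\varphi_1(\tfrac z2)=\varphi_1(-\tfrac z2)$. This should simplify to $-\tfrac12\varphi_1(-z)\bigl(\phi_+(z)-\varphi_1(\tfrac z2)\bigr)$.

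I expect this algebraic identification to be the main obstacle: keeping track of the factors $e^{\pm z/2}$ and of which side $A_j$ sits on in the diagonal reduction is error-prone. The oscillatory cancellation itself is routine given Lemmas~\ref{lem:rhon-bound} and \ref{lem:boundWn}.
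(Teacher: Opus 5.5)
Your proposal is correct and follows the paper's proof essentially step for step. You use \eqref{eq:Wn} with Lemma~\ref{lem:boundWn}, reduce to the $U_\perp$-block, and read off $\Phi_\perp(z)=e^{-z/2}g(z,z)$ from the diagonal entries, whose phase does not depend on $j$. You then dispose of the off-diagonal entries, whose phase is $e^{\mp 2ij\tau\normempty{\mgField[]}}$, by summation by parts under the $\pi k$-nonresonance condition, with \eqref{eq:tauB-lower-bound} covering $k=0$. Your closing algebra also checks out: after eliminating $\phi_-$ you reach $-\tfrac12 e^{-z/2}\bigl(\phi_+(z)\sinch(\tfrac z2)-\varphi_1(z)\varphi_1(-\tfrac z2)\bigr)$. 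To finish, you need $e^{-z/2}\sinch(\tfrac z2)=\varphi_1(-z)$ and $e^{-z}\varphi_1(z)=\varphi_1(-z)$, i.e.\ \eqref{eq:id-sinch-exp} and \eqref{eq:id-phi1-exp}, in addition to the two identities you list.
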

				\begin{proof}
					From \eqref{eq:Wn}, Lemma \ref{lem:boundWn}, and \eqref{eq:defect-v-strich}  we have 
					\begin{align}
						\defecterr[\vel,n]{\perp} &= \tau \properp  \sum_{j=0}^{n-1} e^{(n-j-1)\tau \mgFieldMatrix[]}\defectVel[j+1] + \mathcal{O}(\tau^2)
						\label{eq:defecterr-vel-perp}
						\\
						&= \tau^2 \properp  \sum_{j=0}^{n-1} e^{\tn[n-j-1] \mgFieldMatrix[]}\Bigl(\int_0^1  (\sigma-\tfrac12) e^{\tau(1-\sigma)\mgFieldMatrix[]}
						\elField'(\tpos[j+1/2])
						\varphi_1\bigl( (\sigma-\tfrac12) \tau \mgFieldMatrix[]\bigr)d\sigma
						\nonumber\\
						&\quad - \tfrac{1}{4} \bigl(\phifilter_+  \elField'(\tpos[j+1/2])
						\varphi_1(\tfrac{\tau}{2}\mgFieldMatrix[])
						- \phifilter_-  \elField'(\tpos[j+1/2])
						\varphi_1(-\tfrac{\tau}{2}\mgFieldMatrix[])\bigr)\Bigr)
						e^{\tn[j+1/2]\mgFieldMatrix[]} \vel[\perp](0)\nonumber  \\&\quad+ \mathcal{O}(\tau^2). 
						\nonumber
					\end{align}
					Having the projection $\properp$ on the left and right of the leading order term, implies that the expression in between is de facto two-dimensional. We write $F_j :=  U_\perp^* \elField'(\tpos[j+1/2]) U_\perp$, and the above expression becomes 
					\begin{align*}
						\defecterr[\vel,n]{\perp} = U_\perp G_n U_\perp^*\vel[\perp](0)
					\end{align*}
					with $G_n\in \mathbb{C}^{2\times 2}$ defined by
					\begin{align*}
						&G_n = \tau \sum_{j=0}^{n-1} e^{\tn[n-j-1]\mgFieldMatrix[\perp]}\tau\Bigl(\int_0^1  (\sigma-\tfrac12) e^{(1-\sigma)\tau\mgFieldMatrix[\perp]}
						F_j
						\varphi_1\bigl( (\sigma-\tfrac12) \tau\mgFieldMatrix[\perp]\bigr)d\sigma
						\nonumber\\
						&\quad - \tfrac{1}{4} \bigl(\phi_+(\tau\mgFieldMatrix[\perp]) F_j
						\varphi_1(\tfrac{\tau}{2}\mgFieldMatrix[\perp])
						- \phi_-(\tau\mgFieldMatrix[\perp])  F_j
						\varphi_1(-\tfrac{\tau}{2}\mgFieldMatrix[\perp])\bigr)\Bigr)
						e^{\tn[j+1/2]\mgFieldMatrix[\perp]}. 
					\end{align*}
					Note that all $\varphi_k$ are analytic, which implies that
					\begin{align*}
						\varphi_k(\tau\mgFieldMatrix[\perp]) = \begin{pmatrix}
							\varphi_k(\omega)&0\\0&\varphi_k(-\omega)
						\end{pmatrix},\qquad \omega = i\tau\normempty{\mgField[]}.
					\end{align*}
					Thus the $(1,1)$-entry $G_n^{(1,1)}$ is given by
					\begin{align*}
						G_n^{(1,1)} &= \tau^2 \sum_{j=0}^{n-1}e^{(n-1/2)\omega} \Bigl(\int_0^1 (\sigma-\tfrac12) e^{(1-\sigma)\omega}\varphi_1\bigl( (\sigma-\tfrac12) \omega\bigr) d\sigma \\&\quad- \tfrac{1}{4} \bigl(\phi_+(\omega)
						\varphi_1(\tfrac{\omega}{2})
						- \phi_-(\omega)
						\varphi_1(-\tfrac{\omega}{2})\bigr)\Bigr) F_j^{(1,1)}\\
						& = \tau^2 e^{n\omega}\sum_{j=0}^{n-1} \Bigl(\frac{1}{\omega}\bigl(1-\sinch(\tfrac{\omega}{2})\bigr) \\&\quad- \tfrac{1}{4} e^{-\tfrac{\omega}{2}}\bigl(\phi_+(\omega)
						\varphi_1(\tfrac{\omega}{2})
						- \phi_-(\omega)
						\varphi_1(-\tfrac{\omega}{2})\bigr)\Bigr) F_j^{(1,1)}\\
						& = \tau^2 \Phi_\perp(\omega) e^{n\omega}\sum_{j=0}^{n-1}  F_j^{(1,1)},
					\end{align*}
					where we used \eqref{eq:id-sinch}, \eqref{eq:id-sinch-exp}, and \eqref{eq:id-phi1-exp} together with  $\Phi_0=0$, which implies $\phi_- = 2\varphi_1 - \phi_+$.  Similarly we find
					\begin{align*}
						G_n^{(1,2)} &= \tau^2 \sum_{j=0}^{n-1} e^{(n-3/2)\omega}\Bigl(\int_0^1 (\sigma-\tfrac12) e^{(1-\sigma)\omega}\varphi_1\bigl( (\tfrac12-\sigma) \omega\bigr) d\sigma \\&\quad- \tfrac{1}{4} \bigl(\phi_+(\omega)
						\varphi_1(-\tfrac{\omega}{2})
						- \phi_-(\omega)
						\varphi_1(\tfrac{\omega}{2})\bigr)\Bigr)e^{-2j\omega} F_j^{(1,2)}\\
						& = \tau^2 \sum_{j=0}^{n-1} e^{(n-2j-1)\omega}\Bigl(\tfrac{1}{\omega}\bigl(\sinch(\tfrac{\omega}{2})- \sinch(\omega)\bigr) \\&\quad- \tfrac{1}{4} e^{-\tfrac{\omega}{2}} \bigl(\phi_+(\omega)
						\varphi_1(-\tfrac{\omega}{2})
						- (2 \varphi_1(\omega) - \phi_+(\omega))
						\varphi_1(\tfrac{\omega}{2})\bigr)\Bigr) F_j^{(1,2)}\\
						& = \tau^2 \sum_{j=0}^{n-1} e^{(n-2j-1)\omega}\varphi_1(-\omega)\Bigl(\tfrac{1}{\omega}\bigl(e^{\tfrac{\omega}{2}} - \tfrac12(1 + e^\omega)\bigr) \\&\quad- \tfrac{1}{2} \bigl(\phi_+(\omega)
						- e^{\tfrac{\omega}{2}}\varphi_1(\tfrac{\omega}{2})\bigr)\Bigr) F_j^{(1,2)}\\
						& = \tau^2 \sum_{j=0}^{n-1} e^{(n-2j-1)\omega}  \tfrac12 \varphi_1(-\omega)		
						\bigl(\varphi_1(\omega) - \phi_+(\omega) \bigr) F_j^{(1,2)}
					\end{align*}
					and the remaining two entries can be obtained similarly replacing $\omega$ with $-\omega$. In particular, we have
					\begin{align*}
						G_n^{(2,2)} = \tau^2 \Phi_\perp(-\omega) e^{-n\omega}\sum_{j=0}^{n-1}  F_j^{(2,2)}
					\end{align*}
					and therefore
					\begin{align*}
						\mathrm{diag}(G_n) = \tau^2 \Phi_\perp(\tau\mgFieldMatrix[\perp]) e^{\tn[n]\mgFieldMatrix[\perp]}\sum_{j=0}^{n-1}  \mathrm{diag}(F_j).
					\end{align*}
					For the off-diagonal entry, we have
					\begin{align} \label{eq:phi_perp-tilde}
						G_n^{(1,2)} 
						&= \tau^2 e^{(n-1)\omega} \widetilde{\Phi}_\perp(\omega) \sum_{j=0}^{n-1} e^{-2j\omega} F_j^{(1,2)}\nonumber
						\intertext{with} \widetilde{\Phi}_\perp(\omega) &=  \tfrac12 \varphi_1(-\omega)		
						\bigl(\varphi_1(\omega) - \phi_+(\omega) \bigr)
					\end{align}
					and summation by parts \eqref{eq:sum-by-parts} with $\alpha_j = F_j^{(1,2)}$ and $\beta_j = e^{-2j\omega}$
					gives 
					\begin{align*}
						G_n^{(1,2)} &= \tau^2 e^{(n-1)\omega} \widetilde{\Phi}_\perp(\omega)\sum_{j=0}^{n-1}\beta_j\alpha_j \\&= \tau^2 e^{(n-1)\omega} \widetilde{\Phi}_\perp(\omega)\Bigl(\funexpGauss_{n}(-2 \omega) \alpha_{n-1} 
						+ \sum_{j=0}^{n-2} \funexpGauss_{j}(-2 \omega) (\alpha_j - \alpha_{j+1})\Bigr).
					\end{align*}
					As in \eqref{eq:summ-by-parts}, $a_{j}-a_{j+1} = \mathcal{O}(\tau)$, so if $|\omega|$ is bounded away from integer multiples of $\pi$, we obtain $G_n^{(1,2)} = \mathcal{O}(\tau^2)$.
				\end{proof}

				We now derive the explicit expressions for $\Phi_\perp$.
				\begin{lemma} \label{lem:vel-perp-lead-order}
					For \eqref{eq:voc-v-update-full=HLW}, \eqref{eq:voc-v-update-full=midpoint}, and \eqref{eq:voc-v-update-full=trap}, the leading order term in \eqref{eq:Phiperp-all} is given in Table~$\ref{tab:phipmPhi01}$.
				\end{lemma}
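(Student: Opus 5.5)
The plan is a direct computation: insert $\phi_+$ for each of the three velocity updates into formula \eqref{eq:Phiperp-c}, simplify, and compare with the last column of Table~\ref{tab:phipmPhi01}. The assumptions of Lemma~\ref{lem:perp-vel-err} need no separate check. Table~\ref{tab:phipmPhi01} already records $\Phi_0=0$ for all three variants. The condition $\Phi_1(0)=0$ follows from $\phi_+(0)=1$, which holds for each choice since $\varphi_1(0)=1$ and $2\varphi_2(0)=1$. So only the identification of $\Phi_\perp$ remains.

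For the simplification I will work with the exponential form of every function involved, writing $w=e^{z/2}$:
\begin{equation*}
\varphi_1(z)=\frac{w^2-1}{z},\qquad \varphi_1(-z)=\frac{1-w^{-2}}{z},\qquad \varphi_1(\tfrac z2)=\frac{2(w-1)}{z},\qquad \sinch(\tfrac z2)=\frac{w-w^{-1}}{z}.
\end{equation*}
Then each $\Phi_\perp$ becomes a Laurent polynomial in $w$ divided by a power of $z$. I will also use the identity \eqref{eq:id-sinch-exp}, $\sinch(\tfrac z2)=e^{-z/2}\varphi_1(z)$, and the product formula
\begin{equation*}
\varphi_1(z)\varphi_1(-z)=\frac{(w-w^{-1})^2}{z^2}=\sinch^2(\tfrac z2).
\end{equation*}
A useful first step is to expand the common part once:
\begin{equation*}
\tfrac12\varphi_1(-z)\varphi_1(\tfrac z2)=\frac{(1-w^{-2})(w-1)}{z^2}.
\end{equation*}
This term appears in all three cases, so afterwards only $\tfrac12\varphi_1(-z)\phi_+(z)$ changes from case to case.

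The three cases then go as follows.

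\emph{HLW update} \eqref{eq:voc-v-update-full=HLW}. Here $\phi_+=2\varphi_2(-z)/\varphi_1(-z)$, so the factor $\varphi_1(-z)$ cancels. Writing $\varphi_2(-z)=(e^{-z}-1+z)/z^2$, the target is to show that $\frac1z(1-\sinch(\tfrac z2))$ equals
\begin{equation*}
\varphi_2(-z)-\tfrac12\varphi_1(-z)\varphi_1(\tfrac z2),
\end{equation*}
which gives $\Phi_\perp=0$. Both sides are explicit functions of $w$ and $z$, so this reduces to a polynomial identity.

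\emph{Midpoint rule} \eqref{eq:voc-v-update-full=midpoint}. Here $\phi_+=\varphi_1$, and the product formula turns $\tfrac12\varphi_1(-z)\phi_+$ into $\tfrac12\sinch^2(\tfrac z2)$. Combining this with the common part should collapse to $\frac1z(1-\sinch(z))$. In doing so I will use $\sinch(z)=\frac{w^2-w^{-2}}{2z}$ to match powers of $w$.

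\emph{Exponential trapezoidal rule} \eqref{eq:voc-v-update-full=trap}. Here $\phi_+=2\varphi_2(z)$. I will use $\varphi_1(-z)\varphi_2(z)=\frac{(1-w^{-2})(w^2-1-z)}{z^3}$, and the expression should reduce to $\frac1z(1-\sinch^2(\tfrac z2))$.

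The main obstacle is this last case. It produces a $z^{-3}$ term that has to cancel against part of the $z^{-2}$ and $z^{-1}$ terms, and the bookkeeping is error-prone. To keep it manageable I will first subtract the midpoint result, using
\begin{equation*}
2\varphi_2-\varphi_1=2\Bigl(\varphi_2-\tfrac12\varphi_1\Bigr),
\end{equation*}
so that only the difference $-\varphi_1(-z)\bigl(\varphi_2(z)-\tfrac12\varphi_1(z)\bigr)$ has to be identified with $\frac1z\bigl(\sinch(z)-\sinch^2(\tfrac z2)\bigr)$. This smaller identity I will check by multiplying through by $z^3$ and comparing coefficients of powers of $w$ and $z$. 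As a final sanity check, I will compare each of the three results with its Taylor expansion at $z=0$ to confirm that $\Phi_\perp(0)=0$.
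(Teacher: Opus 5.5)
Your proposal is correct and is essentially the paper's argument: substitute each $\phi_+$ into \eqref{eq:Phiperp-c}, show that the HLW choice gives $\Phi_\perp=0$, reduce the midpoint case to $\frac1z(1-\sinch(z))$, and treat the trapezoidal case as a correction of the midpoint case. The paper does the same algebra with the appendix identities, after noting that the HLW case turns \eqref{eq:Phiperp-c} into $\varphi_2(-z)-\tfrac12\varphi_1(-z)\phi_+(z)$, while you expand in $w=e^{z/2}$. Your choice $\phi_+=2\varphi_2$ for \eqref{eq:voc-v-update-full=trap} is the right one: the paper's proof writes $\phi_+=2(\varphi_1-\varphi_2)$, which is $\phi_-$, and taken literally would give $\frac1z\bigl(1-2\sinch(z)+\sinch^2(\tfrac z2)\bigr)$ instead of the tabulated $\frac1z\bigl(1-\sinch^2(\tfrac z2)\bigr)$.
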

				\begin{proof}
					For \eqref{eq:voc-v-update-full=HLW}, we have $\phi_+(z) = 2\tfrac{\varphi_2(-z)}{\varphi_1(-z)}$. Hence, \eqref{eq:Phiperp-c} becomes 
					\begin{align*}
						\Phi_\perp(z) &= \frac{1}{z}\bigl(1-\sinch(\tfrac{z}{2})\bigr) - \tfrac{1}{2} \varphi_1(-z)\bigl(2\tfrac{\varphi_2(-z)}{\varphi_1(-z)} - \varphi_{1}(\tfrac{z}{2})\bigr)\\
						&=\frac{1}{z}\bigl(1-\sinch(\tfrac{z}{2})\bigr) -  \bigl(\varphi_2(-z) - \tfrac{1}{z} \sinch(\tfrac{z}{2})(1-e^{-\tfrac{z}{2}})\bigr)\\
						&=\frac{1}{z}\bigl(1- \sinch(\tfrac{z}{2}) - ((1-\varphi_1(-z)) - \sinch(\tfrac{z}{2}) + \sinch(\tfrac{z}{2}) e^{-\tfrac{z}{2}})\bigr)
					\end{align*}
					using \eqref{eq:id-phi1phi1half} and \eqref{eq:id-sinch-exp}. The claim follows from \eqref{eq:id-sinch-exp}.
					
					This implies that $\Phi_\perp$ for other choices of filter functions can be written as
					\begin{align*}
						\Phi_\perp(z) = -\tfrac12 \varphi_1(-z)\bigl(\phi_+(z) - 2\tfrac{\varphi_2(-z)}{\varphi_1(-z)}\bigr) = \varphi_2(-z) - \tfrac12\varphi_1(-z) \phi_+(z).
					\end{align*}
					For \eqref{eq:voc-v-update-full=midpoint}, we have $\phi_+(z) = \varphi_{1}(z)$, so 
					\begin{align*}
						\Phi_\perp(z) = \varphi_2(-z) - \tfrac12\varphi_1(-z) \varphi_1(z) = - \psi_{1,{\sinh}}(z)
					\end{align*}
					using \eqref{eq:id-phi1-phi2} and \eqref{eq:id-psi1sinh}.
					
					For \eqref{eq:voc-v-update-full=trap}, we have $\phi_+(z) = 2\bigl(\varphi_1(z)-\varphi_2(z)\bigr)$ so
					\begin{align*}
						\Phi_\perp(z) &= \varphi_2(-z) - \varphi_1(-z) \bigl(\varphi_1(z)-\varphi_2(z)\bigr) \\&= - \psi_{1,{\sinh}}(z) - \sinch(\tfrac{z}{2})e^{-\tfrac{z}{2}} \bigl(\tfrac12\varphi_1(z)-\varphi_2(z)\bigr)
					\end{align*}
					using the derivation for \eqref{eq:voc-v-update-full=midpoint} and \eqref{eq:id-sinch-exp}. The claim then follows from \eqref{eq:id-phi1o2mphi2}.
				\end{proof}
				The derivation above implies that the filter functions in \eqref{eq:HLW-all} from \cite[eq.~(2.12)]{HaiLW20} are the unique choice that satisfy $\Phi_\perp = 0$ and thus $\defecterr[\vel, n]{} = \mathcal{O}(\tau^2)$. In view of Theorem~\ref{thm:second-order}, these are the unique filter functions that yield $\normempty{\error[n]} = \mathcal{O}(\tau^2)$ if the nonresonance Assumption~\ref{ass:nonres} is satisfied.

				Our analysis as well as the experiments show that resonance effects cannot be eliminated in general. However, in the special case, that $\elField'(\tpos[n+1/2])$ commutes with $\mgFieldMatrix[]$ for all $n$, they vanish.
				
				\begin{theorem} \label{thm:commute-no-res}
					Assume that $\elField'(\tpos[n+1/2])$ commutes with $\mgFieldMatrix[]$ for all $n$. Then Theorem~$\ref{thm:second-order}$ and Lemma~\ref{lem:perp-vel-err} hold without the nonresonance condition in Assumption~$\ref{ass:nonres}$.	
				\end{theorem}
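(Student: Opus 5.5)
Where does the nonresonance assumption actually enter? In the proof of Theorem~\ref{thm:second-order} and Lemma~\ref{lem:perp-vel-err}, it is used only through summation-by-parts steps. These produce sums $\sum_k \beta_k$ of powers $e^{k\tau\mgFieldMatrix[]}\properp$ (or $e^{-2k\omega}$), which are bounded via Lemma~\ref{lem:rhon-bound}. There are exactly three such places:
(i) the parallel contribution in Lemma~\ref{lem:error-par}, via \eqref{eq:sum-par} and \eqref{eq:summ-by-parts};
(ii) the bound $\properp W_n=\mathcal{O}(\tau^2)$ in Lemma~\ref{lem:boundWn};
(iii) the off-diagonal entries $G_n^{(1,2)}$ in Lemma~\ref{lem:perp-vel-err}.
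The boundedness of $C_{\auxMatrix}$ for \eqref{eq:HLW-all} is not affected by commutativity, so for that variant the statement must be read for filters with $\normempty{\auxMatrix}\le C_{\auxMatrix}$, as already assumed in Theorem~\ref{thm:second-order}. The plan is to show that in each of (i)--(iii) the oscillatory sum disappears, or is not needed, once $\elField'(\tpos[j+1/2])$ commutes with $\mgFieldMatrix[]$.

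The key observation is algebraic. If a matrix $A$ commutes with $\mgFieldMatrix[]$, it commutes with every function of $\mgFieldMatrix[]$, in particular with $\propar$ and $\properp$. Hence $\propar A\properp=A\propar\properp=0$. Moreover, $U^*AU$ is block diagonal with respect to the splitting \eqref{eq:mgmatrix-diag}. Its $2\times2$ block $F_j=U_\perp^*AU_\perp$ commutes with $\mgFieldMatrix[\perp]=\mathrm{diag}(i\normempty{\mgField[]},-i\normempty{\mgField[]})$, whose eigenvalues are distinct since $\normempty{\mgField[]}>0$. So $F_j$ is diagonal.

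Step (i). In \eqref{eq:peak-order-func} the leading term is $\tau\propar\elField'(\tpos[j+1/2])e^{\tn[j]\mgFieldMatrix[]}(\cdots)\vel[\perp](0)$. Here $\vel[\perp](0)=\properp\vel[\perp](0)$, and $\properp$ commutes with the matrix functions, so the term contains the factor $\propar\elField'\properp=0$. Thus $\propar\defectVel[j+1]=\mathcal{O}(\tau^2)$ directly, and the sum in \eqref{eq:sum-par} vanishes without any summation by parts. The same argument applies to $\defecterr[\vel,n]{\|}$.

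Step (iii). Since $F_j$ is diagonal, $F_j^{(1,2)}=F_j^{(2,1)}=0$, so $G_n$ is exactly its diagonal. The formula $\operatorname{diag}(G_n)=\tau^2\Phi_\perp(\tau\mgFieldMatrix[\perp])e^{\tn[n]\mgFieldMatrix[\perp]}\sum_j\operatorname{diag}(F_j)$ was derived without any nonresonance condition. It gives \eqref{eq:Phiperp-a} for all $\tau$, with $\Phi_\perp$ bounded on $i\mathbb{R}$ for bounded filters.

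Step (ii) is the main obstacle. Here $\properp W_n$ involves $\sum_j e^{\tn[n-j-1]\mgFieldMatrix[]}\elField'(\tpos[j+1/2])\rem_{j+1/2}$, and $\rem_{j+1/2}$ itself contains oscillatory integrals. I would use commutativity to move $\elField'(\tpos[j+1/2])$ to the left of the exponential and expand $\rem_{j+1/2}=\vel[\|](0)+\int_0^{\tn[j+1/2]}e^{(\tn[j+1/2]-s)\mgFieldMatrix[]}\telField(s)\,ds$. This gives
\[
\properp W_n=\tau^2\properp\Phi_1(\tau\mgFieldMatrix[])\sum_{j}\elField'(\tpos[j+1/2])\Bigl(e^{\tn[n-j-1]\mgFieldMatrix[]}\vel[\|](0)+\int_0^{\tn[j+1/2]}e^{(\tn[n-1/2]-s)\mgFieldMatrix[]}\telField(s)\,ds\Bigr).
\]
In the first part, $\properp\elField'\vel[\|](0)=\elField'\properp\propar\vel[\|](0)=0$. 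In the second part, the exponent $\tn[n-1/2]-s$ no longer depends on the summation index $j$ except through the integration limit. Each term is therefore $\mathcal{O}(1)$, and the sum of $n$ such terms times $\tau^2$ is $\mathcal{O}(\tau)$ only.

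To improve this, I would integrate by parts in $s$, as at the end of Lemma~\ref{lem:boundWn}. Together with the bound $\abs{\Phi_1(ix)}\lesssim\min\{1,\abs{x}\}$ (from $\Phi_1(0)=0$ and $\Phi_0=0$), this should give $\properp\Phi_1(\tau\mgFieldMatrix[])\mgFieldMatrix[]^{-1}=\mathcal{O}(\tau)$. The integral becomes $\mgFieldMatrix[]^{-1}$ times bounded terms, namely boundary values and integrals of $\telFieldJac{1}$. The factor $\Phi_1(\tau\mgFieldMatrix[])\mgFieldMatrix[]^{-1}$ then supplies the missing power of $\tau$, so $\properp W_n=\mathcal{O}(\tau^2)$ with no nonresonance condition.

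With (i)--(iii) established, the Gronwall argument in Theorem~\ref{thm:second-order} and the rest of the proof of Lemma~\ref{lem:perp-vel-err} go through verbatim.
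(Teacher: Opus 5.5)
Your proposal is correct. Steps (i) and (iii) are in substance the paper's proof. The paper commutes $E'(\widetilde q_{n+1/2})$ through the matrix functions to obtain $d'_{v,n+1}=\Phi_\perp(\tau\mathbf{B})e^{t_{n+1}\mathbf{B}}P_\perp E'(\widetilde q_{n+1/2})v_\perp(0)$ in \eqref{eq:defect-v-strich-commute}. It then discards the parallel contribution, arguing via $\Phi_\perp(0)=0$, although your observation $P_\parallel E'P_\perp=0$ already suffices. Finally it inserts this form directly into \eqref{eq:defecterr-vel-perp}. That direct insertion amounts to your remark that $F_j$ is diagonal, so no $G_n^{(1,2)}$ terms and no summation by parts arise.

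Step (ii) goes beyond the paper. The paper does not revisit $W_n$, yet the second line of \eqref{eq:defecterr-vel-perp} rests on Lemma~\ref{lem:boundWn}, and that lemma proves $P_\perp W_n=\mathcal{O}(\tau^2)$ only under Assumption~\ref{ass:nonres}. Your argument fills this point. Commutativity lets $P_\perp$ pass through $E'(\widetilde q_{j+1/2})$ onto $w_{j+1/2}$. This annihilates $v_\parallel(0)$ and the parallel part of the integral in $w_{j+1/2}$, which is exactly the slowly varying contribution that otherwise forces summation by parts and nonresonance. What remains is $\int_0^{t_{j+1/2}}e^{(t_{n-1/2}-s)\mathbf{B}}P_\perp\widetilde E(s)\,ds$, and one integration by parts shows it is $\mathcal{O}(\lVert\mathbf{B}\rVert^{-1})$.

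Two small remarks. First, state explicitly that $P_\perp$ is moved onto the integrand, since $\mathbf{B}^{-1}$ only makes sense there. Second, the last step does not need the small-argument bound on $\Phi_1$: under \eqref{eq:tauB-lower-bound} you have $\lVert\mathbf{B}\rVert^{-1}\le\tau/\vartheta_0$, so boundedness of $\Phi_1$ on $i\mathbb{R}$ is enough. In this respect your proof of the theorem is more complete than the paper's.
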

				\begin{proof}
					With the new assumption that $\elField'(\tpos[n+1/2])$ commutes with $\mgFieldMatrix[]$, we can give the following more favorable representation for the velocity defect than in  Lemma~\ref{lem:defect-v-direct}, namely
					\begin{align}
						\defectVel[n+1]' & = \Bigl(\int_0^1  (\sigma-\tfrac12) e^{\tau(1-\sigma)\mgFieldMatrix[]}
						\varphi_1\bigl( (\sigma-\tfrac12) \tau \mgFieldMatrix[]\bigr)d\sigma
						\nonumber\\
						&\quad
						- \tfrac{1}{4} \bigl(\phifilter_+ 
						\varphi_1(\tfrac{\tau}{2}\mgFieldMatrix[])
						- \phifilter_-  
						\varphi_1(-\tfrac{\tau}{2}\mgFieldMatrix[])\bigr)
						\Bigr)
						e^{\tn[n+1/2]\mgFieldMatrix[]} \properp 
						\elField'(\tpos[n+1/2])  \vel[\perp](0)   \nonumber\\
						&=\Phi_\perp(\tau\mgFieldMatrix[]) e^{\tn[n+1]\mgFieldMatrix[]} \properp 
						\elField'(\tpos[n+1/2])  \vel[\perp](0) 	
						\label{eq:defect-v-strich-commute}
					\end{align}
					with $\Phi_\perp$ defined in \eqref{eq:Phiperp-c}.
					Hence, if $\phi_+(0) = 1$, which is the case for all choices in Table~\ref{tab:phipmPhi01}, then $\Phi_\perp(0)=0$. Therefore, for the parallel error contributions in Lemma~\ref{lem:error-par}, we obtain $\defecterr[\pos,n]{\|} = \mathcal{O}(\tau^2)$ and $\defecterr[\vel,n]{\|} = \mathcal{O}(\tau^2)$ immediately without a nonresonance condition. Moreover, for the perpendicular velocity error, instead of the technique used in the proof of Lemma~\ref{lem:perp-vel-err}, we can directly insert \eqref{eq:defect-v-strich-commute} into \eqref{eq:defecterr-vel-perp} to obtain
					\begin{align*}
						\defecterr[\vel,n]{\perp} &= \tau^2 \Phi_\perp(\tau\mgFieldMatrix[])\sum_{j=0}^{n-1} e^{\tn[n-j-1] \mgFieldMatrix[]} e^{\tn[j+1]\mgFieldMatrix[]}\properp \elField'(\tpos[j+1/2])\vel[\perp](0)  + \mathcal{O}(\tau^2)\\
						&=\tau^2 \Phi_\perp(\tau\mgFieldMatrix[])  e^{\tn[n] \mgFieldMatrix[]}\sum_{j=0}^{n-1}\properp \elField'(\tpos[j+1/2])\vel[\perp](0)  + \mathcal{O}(\tau^2).
					\end{align*}
					As for the parallel defects, we immediately obtain the result of Lemma~\ref{lem:perp-vel-err} without a nonresonace condition.	
				\end{proof}
				
				\section{Numerical examples} \label{sec:numerics}
				We illustrate the behavior of the exponential integrator \eqref{eq:gen-expint} for different choices of filter functions and visualize the error bounds of the previous sections.\footnote{At \url{https://gitlab.kit.edu/kit/ianm/ag-numerik/projects/filtered-boris} we provide access to the code used to conduct the experiments in this section. Parts of the Code and plotting scripts were written with the help of AI, i.e., Claude Code using Claude Opus 4.8 and 5.} In particular, we investigate robustness at resonances, the convergence order across different $\tau\normempty{\mgField[]}$ regimes, the distinct behavior of the error components parallel and perpendicular to $\mgField[]$, and the explicit leading-order terms derived in Lemmas~\ref{lem:error-all}, \ref{lem:perp-vel-err}, and \ref{lem:vel-perp-lead-order}.
				
				For all experiments, we use \eqref{eq:eul-lag-filt-all} with
				\begin{align*}
					\pos[0] = \begin{pmatrix}
						1\\2\\3
					\end{pmatrix},\quad \vel[0]  = \begin{pmatrix}
						1\\0\\0
					\end{pmatrix},
					\quad
					\text{ and } 
					\quad 
					\mgField[] = \frac{1}{\delta}\frac{2}{\sqrt{21}}
					\begin{pmatrix}
						1\\2\\ \tfrac12
					\end{pmatrix}, 
					\quad \delta > 0.
				\end{align*}
				Hence we have $\normempty{\mgField[]} = 1/\delta$. We choose a smooth, anisotropic, electric potential
				\begin{equation*}
					V(\pos) = \frac{1}{\alpha^2} \sin^2\Bigl(\alpha \sqrt{\pos^{\top}A \pos}\Bigr),
					\qquad
					A = \mathrm{diag}(1,\tfrac32,\tfrac{7}{10}),
					\qquad \alpha = 10,
				\end{equation*}
				which leads to the electric field 
				\begin{equation} \label{eq:E-nonlin}
					\elField(\pos) = -\nabla V(\pos)
					= -
					2 \sinc(2\alpha \sqrt{\pos^{\top} A \pos}) A\pos.
				\end{equation}
				To compare the qualitative behavior of linear and nonlinear fields we use
				\begin{equation} \label{eq:E-lin}
					\elField_{\mathrm{lin}}(\pos) = -2 A\pos ,
				\end{equation}
				the harmonic field obtained from linearizing $\elField[]$ at $\pos = 0$.
				
				The potential is designed to exercise the full analysis. The genuine nonlinearity with moderate oscillation is chosen to amplify the higher-order derivatives of $\elField$ making $\elField[]$-dependent terms in the error expansion numerically visible. The anisotropy of $A$ prevents the derivative $\elField[]'$ from commuting with the rotation generated by $\mgField[]$, which is necessary for a generic example as shown in Theorem~\ref{thm:commute-no-res}. We visualize this in Figure~\ref{fig:exp5} below.
				
				In all experiments we use a reference solution from an adaptive eighth-order explicit Runge--Kutta method (Dormand--Prince
				\textsf{DP8} scheme) with relative tolerance $10^{-13}$ and dense output; all errors reported below are the maximal error on the time-interval $[0,1]$ in the Euclidean norm.
				\begin{figure}
					\includegraphics{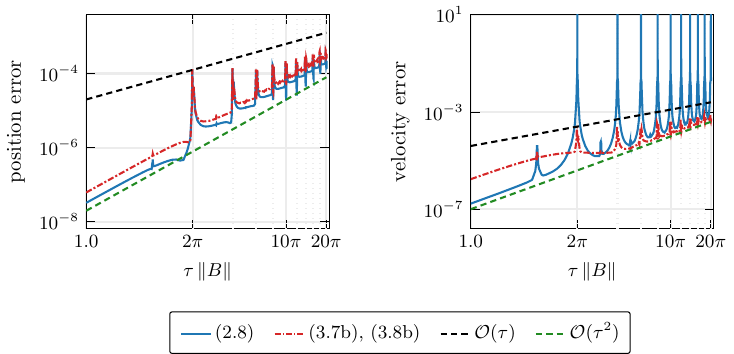}
					\caption{Maximum position (left) and velocity (right) errors
						versus $\theta=\tau\normempty{\mgField[]}=\tau/\delta$ for $\delta=10^{-3}$. Additional sample points are placed near $\pi, 3\pi, 5\pi$ and near the resonances $2k\pi,$ $k=1,\dots,10$, down to a distance of $10^{-5}$ from the respective multiple of $\pi$, to resolve the spikes.}
					\label{fig:exp1}
				\end{figure}
				
				First, we compare the error behavior of the filter functions in \eqref{eq:HLW-all}, corresponding to \cite[eq.~(2.12)]{HaiLW20}, with the choice \eqref{eq:voc-v-update-full=trap}, \eqref{eq:voc-vhalf-update-vdk-neu}. The primary interest here is the behavior in different regimes of $\tau\normempty{\mgField[]}$ especially close to resonances $\tau\normempty{\mgField[]} = 2k\pi$ for $k\in\mathbb{N}$. Recall that the approximations from \eqref{eq:HLW-all} are not defined at resonant time steps because of the singularity of the filter functions $\phi_\pm$. For the position error, the left panel of Figure~\ref{fig:exp1} confirms the convergence behavior established in the preceding sections. Away from the resonances, both filter choices exhibit clean second-order convergence in $\tau$, in agreement with Theorem~\ref{thm:second-order}: the error curves run parallel to the $\mathcal{O}(\tau^2)$ reference line over the entire non-resonant range. At the resonances themselves, the analysis in Theorem~\ref{thm:error-bound} only guarantees first order for \eqref{eq:voc-v-update-full=trap}, \eqref{eq:voc-vhalf-update-vdk-neu}.
				For \eqref{eq:HLW-all}, the singularities caused by the singular filter functions $\phi_\pm$ in $\auxMatrix$ are not visible as for the filter functions \eqref{eq:HLW-all} satisfy
				\begin{align*}
					\tau(\tfrac12 \psifilter_+ +\upsilonfilter_n)\phifilter_\pm 
					= \tau \funexpGauss_{n}(\tau \mgFieldMatrix[])\varphi_1(\tau\mgFieldMatrix[])\phifilter_\pm
					= \tau \funexpGauss_{n}(\tau \mgFieldMatrix[])e^{\pm \tau \mgFieldMatrix[]}\varphi_2(\mp\tau\mgFieldMatrix[]),
				\end{align*}
				using \eqref{eq:id-phi1-exp}, which is bounded by $\tfrac12 t_n$ by \eqref{eq:upsilonfilter}.
				
				While for \eqref{eq:HLW-all} we see the effect of the singularities of the filter functions $\phi_\pm$, and indeed the error deteriorates in narrow spikes around resonances. The observed peaks, however, remain visibly below the $\mathcal{O}(\tau)$ envelope, i.e., the predicted loss of order does not occur; this is a consequence of 
				the precise form of the parallel defect derived in \eqref{eq:peak-order-func} for fixed $\delta = 1/\normempty{\mgField[]}$ we have at resonances $\tau/\delta = 2\pi k$,
				\begin{align*}
					\tau \normempty{\varphi_2(\tau\mgFieldMatrix[]) - \tfrac12\varphi_1(\tau\mgFieldMatrix[]) }
					= \delta.
				\end{align*}
				Finally, for large stepsizes the error is mainly driven by the non-resonant contribution, which grows like $\tau^2$ and eventually dominates the resonance spikes.
				
				The velocity error in the right panel of Figure~\ref{fig:exp1} shows a different picture. Away from the resonances, the filter functions \eqref{eq:HLW-all} again converge with second order, whereas by Theorem~\ref{thm:error-bound}, the choice \eqref{eq:voc-v-update-full=trap}, \eqref{eq:voc-vhalf-update-vdk-neu} leads to errors below the line representing first order for all $\tau$. The more precise error behavior results from the leading order term $\Phi_\perp(\tau \mgFieldMatrix[])$ in Lemma~\ref{lem:perp-vel-err}, which is given explicitly in Table~\ref{tab:phipmPhi01} and visualized in Figure~\ref{fig:exp3} below.

				At the resonances, the two methods differ fundamentally. For \eqref{eq:HLW-all} we see the effect of the singularities of the filter functions $\phi_\pm$,  with error peaks growing like the reciprocal distance of $\tau\normempty{\mgField[]}$ to $2k\pi$.
				The choice \eqref{eq:voc-v-update-full=trap}, \eqref{eq:voc-vhalf-update-vdk-neu}, in contrast, is robust at resonances: no blow-up occurs, and, as for the position component, no first-order behavior is visible. In addition, both methods exhibit smaller humps at odd multiples of $\pi$, most pronounced for \eqref{eq:HLW-all}. As in the left panel, the errors are eventually dominated by the $\mathcal{O}(\tau^2)$ contributions for large stepsizes.
				\begin{figure}
					\includegraphics{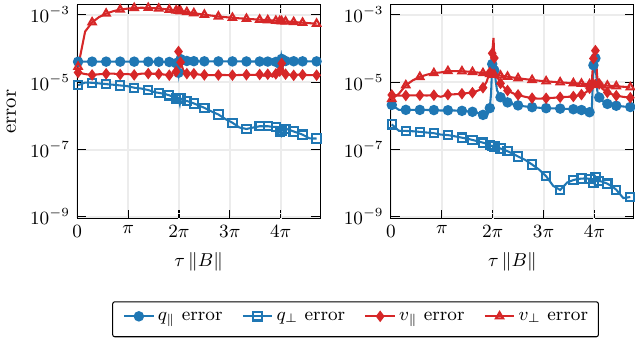}
					\caption{Errors of \eqref{eq:voc-v-update-full=trap}, \eqref{eq:voc-vhalf-update-vdk-neu}, decomposed into the components parallel and perpendicular to $\mgField$, versus $\theta = \tau\normempty{\mgField[]}$ for the linear field \eqref{eq:E-lin} (left) and the nonlinear field \eqref{eq:E-nonlin} (right). The stepsize $\tau = 0.005$ is fixed and $\normempty{\mgField[]}$ is varied.}
					\label{fig:exp2}
				\end{figure}
				
				In Figure~\ref{fig:exp2} we show the contributions of the error of \eqref{eq:voc-v-update-full=trap}, \eqref{eq:voc-vhalf-update-vdk-neu} into components parallel and perpendicular to $\mgField[]$, for the linear field \eqref{eq:E-lin} (left) and the nonlinear field \eqref{eq:E-nonlin} (right). Here the stepsize $\tau = 0.005$ is fixed and $\theta = \tau\normempty{\mgField[]}$ is varied through $\normempty{\mgField[]}$, so that the panels directly display the $\theta$-dependence of the error for fixed $\tau$. The qualitative behavior matches the leading-order terms of Lemmas~\ref{lem:error-par}, \ref{lem:error-all} and \ref{lem:perp-vel-err} componentwise: the parallel position error is essentially independent of $\theta$, the perpendicular position error decays with growing $\normempty{\mgField[]}$, and the resonances at $\tau\normempty{\mgField[]} = 2k\pi$ are visible predominantly in the velocity components. The decay of the perpendicular position error is a consequence of \eqref{eq:pos-perp-decay}, which yields that it decays like $1/(\tau\normempty{\mgField[]})$ for large magnetic fields and shows no sign of resonances. Moreover, we observe that the linear electric field already reproduces all qualitative features of the general case, and the two panels differ only in the size of the error constants.
				
				\begin{figure}
					\includegraphics{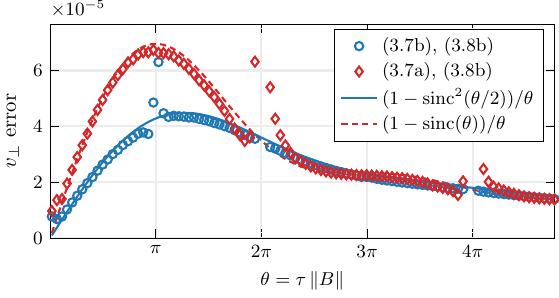}
					\caption{Perpendicular velocity error for $\phi_\pm$ filters \eqref{eq:voc-v-update-full=trap} and \eqref{eq:voc-v-update-full=midpoint} (both with \eqref{eq:voc-vhalf-update-vdk-neu}) versus $\theta = \tau\normempty{\mgField[]}$, together with the leading-order error terms from Lemma~\ref{lem:vel-perp-lead-order}, scaled by a constant to match at $\theta = 1$. For the errors, the stepsize $\tau = 0.01$ is fixed and $\normempty{\mgField[]}$ is varied.}
					\label{fig:exp3}
				\end{figure}
				
				Figure~\ref{fig:exp3} examines the perpendicular velocity error in more detail and compares it to the leading-order error terms derived in Lemma~\ref{lem:vel-perp-lead-order} and Table~\ref{tab:phipmPhi01}.  The measured errors follow these curves closely over the whole range of $\theta$, including the non-monotone behavior up to the first resonance: position, height and shape of the maximum near $\theta \approx \pi$ are correctly predicted, as is the slow decay beyond it. In particular, this confirms that for moderate values of $\theta$ the perpendicular velocity error is completely described by the oscillatory leading-order term, which explains the absence of a clear convergence order in this regime observed in Figure~\ref{fig:exp1}. Deviations from the leading-order curves are visible only close to the resonances $\theta = 2k\pi$, where the resonant terms resulting from the summation-by-parts step in the proof of Lemma~\ref{lem:boundWn} become relevant. The additional resonances at odd multiples of $\pi$ arising from the off-diagonal terms in Lemma~\ref{lem:perp-vel-err} vanish for the $\phi_\pm$ filter functions \eqref{eq:voc-v-update-full=midpoint}, as $\widetilde{\Phi}_\perp = 0$ if $\phi_+ = \varphi_{1}$.	
				\begin{figure}
					\includegraphics{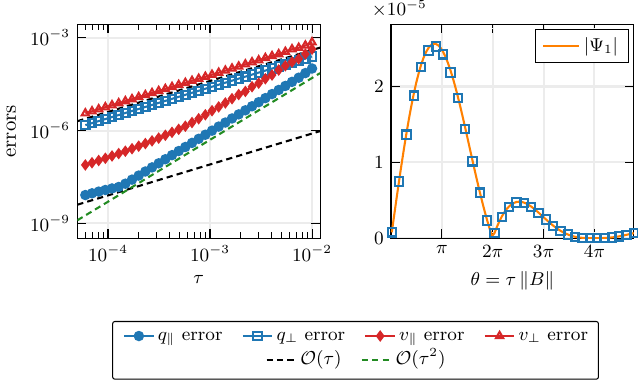}
					\caption{Filter choice \eqref{eq:voc-v-update-full=trap}, \eqref{eq:voc-vhalf-update-vdk} with $\Psi_1 \neq 0$. The electrical field was chosen $20$ times larger for this experiment. Left: error components versus $\tau$ at the fixed non-resonant value $\tau\normempty{\mgField[]} = 3$. Right: perpendicular position error versus $\theta = \tau\normempty{\mgField[]}$ at fixed stepsize $\tau = 0.001$, together with the leading-order term $\Psi_1$ from Lemma~\ref{lem:error-all} given in \eqref{eq:Psi_1_Fig_4} and scaled by a constant to match at $\theta = 1$.}
					\label{fig:exp4}
				\end{figure}
				
				Figure~\ref{fig:exp4} investigates the filter choice \eqref{eq:voc-v-update-full=trap}, \eqref{eq:voc-vhalf-update-vdk}, which is not covered by the second-order result in Theorem~\ref{thm:second-order} because $\Psi_1 \neq 0$ in Lemma~\ref{lem:error-all}. Thus, the perpendicular position error drops to first order in general. The right panel confirms that $\Psi_1$ indeed constitutes the leading-order term: at fixed $\tau = 0.001$, the perpendicular position error follows the graph of 
				\begin{equation} \label{eq:Psi_1_Fig_4}
					\abs{\Psi_1(i\theta)} =  \Bigl|\frac{\sinc(\tfrac{\theta}{2})(\cos(\tfrac{\theta}{2})-1)}{\theta}\Bigr|
				\end{equation}
				across the whole range of $\theta$, including the zeros at the resonances $\theta = 2k\pi$ and the decay for large $\theta$. The left panel shows the convergence of all four error components at the fixed non-resonant value $\tau\normempty{\mgField[]} = 3$. The perpendicular position error is of first order throughout, as predicted. In contrast to the velocity error, however, the order reduction propagates through $\errorelField[n]$ in the error recursion \eqref{eq:voc-discrete}, into the remaining components. These therefore exhibit second-order behavior only as long as the propagated first-order term is negligible, and drop to first order once it becomes dominant visible in the left panel as a transition of the parallel error terms from second to first order. The magnitude of the electric field was adjusted to make the effect visible.
				
				Lastly, we investigate the special case in which the derivative of the electric field $\elField[]'$ commutes with the magnetic field matrix $\mgFieldMatrix[]$. We pick a similar electric potential as for \eqref{eq:E-nonlin}, but varying in the direction of $\mgField[]$ and add an additional confinement in direction orthogonal to $\mgField[]$, i.e.
				\begin{align*}
					V_{\mathrm{c}}(\pos) = \frac{1}{\alpha^2} \sin^2\Bigl(\alpha \pos^{\top}\frac{\mgField[]}{\normempty{\mgField[]}}\Bigr) + \frac{\beta}{2} \normempty{\properp \pos}^2, \qquad \alpha = 10, \qquad \beta = 0.03,
				\end{align*} 
				which leads to the electric field
				\begin{equation} \label{eq:E-comm}
					\elField[c](\pos) = -\nabla V_{\mathrm{c}}(\pos)
					= -
					\frac{1}{\alpha}\sin\Bigl(2\alpha \pos^{\top}\frac{\mgField[]}{\normempty{\mgField[]}}\Bigr) \frac{\mgField[]}{\normempty{\mgField[]}} - \beta \properp \pos.
				\end{equation}
				Its Jacobian is a sum of the parallel and perpendicular projection
				\begin{align*}
					\elFieldJac_c(\pos) = -2\cos\Bigl(2\alpha \pos^{\top}\frac{\mgField[]}{\normempty{\mgField[]}}\Bigr) \propar - \beta \properp
				\end{align*} 
				and hence it commutes with $\mgFieldMatrix[]$ for all $\pos$. Nevertheless, the example is nontrivial as $\elFieldJac_c$ is not a scalar multiple of the identity, $\propar$ or $\properp$ and  $\elField_c''\neq 0$. 
				\begin{figure}
					\includegraphics{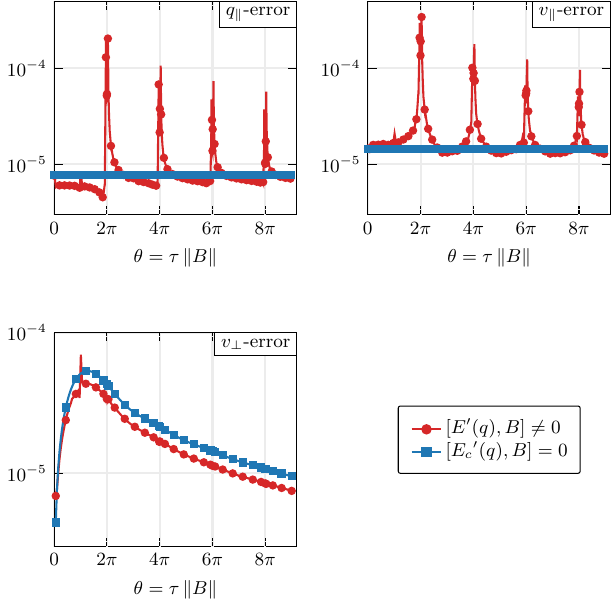}
					\caption{Error components of \eqref{eq:voc-v-update-full=trap}, \eqref{eq:voc-vhalf-update-vdk-neu} versus $\theta=\tau\normempty{\mgField[]}$ for a field pair in which $\elField'$ and $\mgField$ commute (squares) and do not commute (circles). The stepsize $\tau=0.01$ is fixed and $\normempty{\mgField[]}$ is varied.}
					\label{fig:exp5}
				\end{figure}
					
					In Figure~\ref{fig:exp5} we compare the errors for $\elField[c]$ from \eqref{eq:E-comm} to the error for the generic choice \eqref{eq:E-nonlin} and numerically verify Theorem~\ref{thm:commute-no-res}. If $\elField[]'$ commutes with $\mgFieldMatrix[]$ then resonances effects are absent in the parallel errors, which are therefore constant in $\normempty{\mgField[]}$. The perpendicular velocity error shows leading order $\Phi_\perp$ behavior for both choices, but again the resonant spike at $\tau\normempty{\mgField[]} = \pi$ does not exist for the commuting electrical field. Recall that the perpendicular position error does not suffer from resonance effect in the general case, cf.~Figure~\ref{fig:exp2}, hence we do not show it here.
					
					\appendix
					\section{Useful identities}
					
					In this appendix we collect some identities and lemmas used in the proofs above.
					\begin{subequations} \label{eq:id-useful-all}
						\begin{align} 
							\label{eq:id-sinch}
							\sinch (z) &= \tfrac12 \bigl( \varphi_1(z) + \varphi_1(- z)\bigr)= \tfrac12 \varphi_1(\pm z)\bigl(1 + e^{\mp z}\bigr),\\[2mm]
							\label{eq:id-sinch-exp}
							\sinch \bigl(\tfrac{z}{2}\bigr)  &= \varphi_1(\pm z) e^{\mp \tfrac{z}{2}} ,\\[2mm]
							\label{eq:id-phi1-exp}
							\varphi_1(\pm z) e^{\mp z} &= \varphi_1(\mp z)\\
							\label{eq:id-phi1-phi2}
							\varphi_1(z) \varphi_1(-z) &= \varphi_2(z) + \varphi_2(-z)\\[2mm]
							\label{eq:id-phi1phi1half}
							\varphi_1(\pm z) \varphi_1(\mp\tfrac{z}{2}) &= \sinch(\tfrac{z}{2}) \varphi_1(\pm\tfrac{z}{2})\\[2mm]
							\tfrac{z}{2} \sinch(\tfrac{z}{2}) \psi_{1,\cosh}(\tfrac{z}{2}) & = \tfrac12\bigl( \varphi_1(z) \varphi_1(-\tfrac{z}{2}) 
							 -\varphi_1(-z) \varphi_1(\tfrac{z}{2}) \bigr)
							\\[2mm]
							\label{eq:id-psi1sinh}
							\psi_{1,\sinh}(z) &= \frac{\sinh(z)-z}{z^2} 
							= \tfrac1{2} \bigl(\varphi_2(z)- \varphi_2(-z)\bigr)
							\\[2mm]
							\label{eq:id-phi2squaring}
							\varphi_2(z) & = \frac14 \bigl( \varphi_1^2(\tfrac{z}{2}) + 2 \varphi_2(\tfrac{z}{2}) \bigr)
							\\[2mm]
							\label{eq:id-phi3diff}
							\varphi_2(z) \varphi_1(-z) - \varphi_2(-z) & = \varphi_3(z) - \varphi_3(-z)
							\\[2mm]
							\label{eq:id-phi1o2mphi2}
							(\varphi_2(z) - \tfrac12\varphi_1(z))e^{-\tfrac{z}{2}}  & = \frac{\sinch(\tfrac{z}{2}) - \cosh(\tfrac{z}{2})}{z}
						\end{align}
					\end{subequations}

					\begin{lemma} \label{lem:HWL-psipm-phi2-identity}
						For $\psi$ chosen as in \eqref{eq:filter-HL} we have
						\begin{equation*}
							\psi(z) \mp 2 \varphi_1(\pm z) \Upsilon(z)
							= 2\varphi_2(\pm z) , \qquad \Upsilon(z)  = \frac{\sinch^{-1}(z)-1}{z}.
						\end{equation*}
					\end{lemma}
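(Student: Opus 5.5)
The identity is purely scalar. I would verify it as an identity of meromorphic functions by writing everything in terms of $e^z$, then extend it to matrix arguments through the functional calculus on the spectrum $\{0,\pm i\tau\normempty{\mgField[]}\}$, away from the poles of $\Upsilon$. I would first prove the upper-sign case and then get the lower sign from parity, rather than computing twice.

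\textbf{Step 1: explicit formulas.} With $\psi$ from \eqref{eq:filter-HL} and \eqref{eq:tanch-id} I use
\begin{equation*}
\psi(z) = \frac{2}{z}\,\frac{e^z-1}{e^z+1},\qquad
\varphi_1(z)=\frac{e^z-1}{z},\qquad
\varphi_2(z)=\frac{e^z-1-z}{z^2}.
\end{equation*}
Since $\sinch^{-1}(z) = z/\sinh z = 2z e^z/(e^{2z}-1)$, this gives
\begin{equation*}
\Upsilon(z) = \frac{1}{z}\Bigl(\frac{2z e^z}{e^{2z}-1}-1\Bigr).
\end{equation*}

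\textbf{Step 2: upper sign.} Using $e^{2z}-1=(e^z-1)(e^z+1)$, the factor $e^z-1$ cancels:
\begin{equation*}
2\varphi_1(z)\Upsilon(z) = \frac{2}{z^2}\Bigl(\frac{2z e^z}{e^z+1} - (e^z-1)\Bigr).
\end{equation*}
Subtracting this from $\psi(z)$ and combining the two terms with denominator $z^2(e^z+1)$ gives
\begin{equation*}
\frac{2z(e^z-1)-4ze^z}{z^2(e^z+1)} = -\frac{2}{z}.
\end{equation*}
Hence
\begin{equation*}
\psi(z)-2\varphi_1(z)\Upsilon(z) = \frac{2(e^z-1)}{z^2} - \frac{2}{z} = 2\varphi_2(z).
\end{equation*}

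\textbf{Step 3: lower sign.} I would note that $\psi$ is even, because $\tanh$ is odd. Also $\sinch^{-1}$ is even, so $\Upsilon(z)=(\sinch^{-1}(z)-1)/z$ is odd. Replacing $z$ by $-z$ in Step 2 therefore gives
\begin{equation*}
\psi(z)+2\varphi_1(-z)\Upsilon(z) = \psi(-z)-2\varphi_1(-z)\Upsilon(-z) = 2\varphi_2(-z),
\end{equation*}
which is the lower-sign identity.

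\textbf{Main obstacle.} The only delicate point is the singular set. The point $z=0$ is a removable singularity of every function involved, so the identity holds there by continuity. At $z\in i\pi(\mathbb{Z}\setminus\{0\})$ the function $\Upsilon$ has poles, and the identity is meant away from these points. This matches the requirement that \eqref{eq:HLW-all} be used only under the nonresonance condition \eqref{eq:nonres}. Because all functions are evaluated at the normal matrix $\tau\mgFieldMatrix[]$, the matrix version of the identity follows directly from the scalar one through the diagonalization \eqref{eq:mgmatrix-diag}.
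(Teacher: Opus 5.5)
Your proof is correct and follows the paper's computation: the same factorization $e^{2z}-1=(e^z-1)(e^z+1)$, the cancellation of $e^z-1$, and the reduction of the two fractional terms to $-2/z$. The only difference is that the paper carries both signs through at once by writing $\sinch^{-1}(z)$ and $\psi(z)$ in terms of $e^{\pm z}$, whereas you derive the lower sign from the evenness of $\psi$ and the oddness of $\Upsilon$. One side remark is inaccurate, though it plays no role in the proof: \eqref{eq:nonres} only excludes $\tau\normempty{\mgField[]}$ near $2\pi k$, so it does not keep $\tau\mgFieldMatrix[]$ away from the poles of $\Upsilon$ (and of $\psi$) at odd multiples of $i\pi$, and the singular set does not ``match'' that condition.
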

					\begin{proof}
						First, note that by \eqref{eq:tanch-id} and
						\begin{align*}
							\sinch^{-1}(z) & = \frac{2z}{e^z-e^{-z}} = \pm  \frac{2 z e^{\pm z}}{(e^{\pm z}-1)(e^{\pm z}+1)}
						\end{align*}
						yields
						\begin{align*}
							\psi(z) \mp 2 \varphi_1(\pm z) \Upsilon(z) 
							& =   \frac{2}{z^2} \Bigl( \pm z \frac{e^{\pm z}-1}{e^{\pm z}+1} - \bigl(e^{\pm z}-1\bigr) \bigl( \pm \frac{2 z e^{\pm z}}{(e^{\pm z}-1)(e^{\pm z}+1)} -1 \bigr) \Bigr)\\
							& =   \frac{2}{z^2} \Bigl( \pm z \frac{e^{\pm z}-1}{e^{\pm z}+1} \mp \frac{2 z e^{\pm z}}{e^{\pm z}+1} + (e^{\pm z}-1) \Bigr)\\
							& =   \frac{2}{z^2} \bigl( e^{\pm z}-1 \mp z)\\
							& =    2\varphi_2(\pm z).
						\end{align*}
						This proves the identity in the lemma.
					\end{proof}

					\begin{lemma}  \label{lem:HLW-Psi_1}
						For \eqref{eq:HLW_q} we have $\Psi_1= 0$.
					\end{lemma}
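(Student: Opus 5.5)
The plan is a direct calculation: insert the filter functions of \eqref{eq:HLW_q} into the definition \eqref{eq:Psi1-def} of $\Psi_1$, then reduce the result to zero using the identities collected in \eqref{eq:id-useful-all}. By Table~\ref{tab:psichipmPsi01}, the scheme \eqref{eq:HLW_q} has $\psi_-(z)=2\varphi_1(z)$, $\psi_+(z)=0$, $\chi_-(z)=4\varphi_2(z)$ and $\chi_+(z)=0$. With these, \eqref{eq:Psi1-def} becomes
\begin{equation*}
\Psi_1(z) = \tfrac12 \psi_{1,\sinh}(\tfrac{z}{2}) + \tfrac12 \varphi_1(z)\varphi_1(-\tfrac{z}{2}) - \varphi_2(z),
\end{equation*}
so it suffices to show that the first two terms add up to $\varphi_2(z)$.

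First, I rewrite each of the two terms in terms of $\varphi$-functions evaluated at $\pm z/2$.
\begin{itemize}
\item For the first term, \eqref{eq:id-psi1sinh} with $z/2$ in place of $z$ gives $\tfrac12\psi_{1,\sinh}(\tfrac{z}{2}) = \tfrac14\bigl(\varphi_2(\tfrac z2)-\varphi_2(-\tfrac z2)\bigr)$.
\item For the product, \eqref{eq:id-phi1phi1half} (upper sign) gives $\varphi_1(z)\varphi_1(-\tfrac z2) = \sinch(\tfrac z2)\varphi_1(\tfrac z2)$.
\item Expanding $\sinch(\tfrac z2) = \tfrac12\bigl(\varphi_1(\tfrac z2)+\varphi_1(-\tfrac z2)\bigr)$ by \eqref{eq:id-sinch} turns the product into $\tfrac12\varphi_1(\tfrac z2)^2 + \tfrac12\varphi_1(\tfrac z2)\varphi_1(-\tfrac z2)$.
\item Applying \eqref{eq:id-phi1-phi2} at $z/2$ to the mixed product gives $\varphi_1(\tfrac z2)\varphi_1(-\tfrac z2) = \varphi_2(\tfrac z2)+\varphi_2(-\tfrac z2)$.
\end{itemize}

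Adding the two contributions, the terms $\pm\tfrac14\varphi_2(-\tfrac z2)$ cancel. What remains is $\tfrac14\bigl(\varphi_1^2(\tfrac z2) + 2\varphi_2(\tfrac z2)\bigr)$, which equals $\varphi_2(z)$ by the squaring identity \eqref{eq:id-phi2squaring}. Hence $\Psi_1=0$.

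There is no real obstacle here. The only point requiring care is keeping the signs of the arguments $\pm z/2$ straight, in particular choosing the correct sign variant of \eqref{eq:id-phi1phi1half}. If needed, every identity used can be checked directly from $\varphi_1(z)=(e^z-1)/z$ and $\varphi_2(z)=(e^z-1-z)/z^2$.
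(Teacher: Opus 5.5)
Your proof is correct and is essentially the paper's argument. Both substitute $\psi_-=2\varphi_1$, $\psi_+=\chi_+=0$, $\chi_-=4\varphi_2$ into \eqref{eq:Psi1-def} and cancel using \eqref{eq:id-psi1sinh}, \eqref{eq:id-phi1phi1half}, \eqref{eq:id-sinch}, \eqref{eq:id-phi1-phi2} and \eqref{eq:id-phi2squaring}; the only difference is order, since the paper expands $\varphi_2(z)$ by the squaring identity first and finishes with \eqref{eq:id-phi1-phi2}, while you do the reverse (and your normalization is the accurate one, as the paper's first displayed line actually equals $2\Psi_1(z)$, which is harmless for the conclusion).
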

					\begin{proof} 
						Using \eqref{eq:id-phi2squaring} and \eqref{eq:id-psi1sinh} we have
						\begin{align*}
							\Psi_1(z) & =
							\tfrac12 \bigl(\varphi_2(\tfrac{z}{2}) - \varphi_2(-\tfrac{z}{2} )\bigr) + \varphi_1(z) \varphi_1(-\tfrac{z}{2} ) 
							- \tfrac12 \varphi_1^2(\tfrac{z}{2}) - \varphi_2(\tfrac{z}{2})
							&& \\
							&=-  \tfrac12 \bigl(\varphi_2(\tfrac{z}{2}) + \varphi_2(-\tfrac{z}{2} )\bigr)
							+ \varphi_1(\tfrac{z}{2}) \bigl( \sinch(\tfrac{z}{2}) - \tfrac12 \varphi_1(\tfrac{z}{2})\bigr)
							&& \text{by \eqref{eq:id-phi1phi1half}}\\
							& = -  \tfrac12 \bigl(\varphi_2(\tfrac{z}{2}) + \varphi_2(-\tfrac{z}{2} )\bigr)
							+ \tfrac12 \varphi_1(\tfrac{z}{2})   \varphi_1(-\tfrac{z}{2})		 
							&&\text{by \eqref{eq:id-sinch}.}
						\end{align*}
						Finally, $\Psi_1=0$  follows from 	\eqref{eq:id-phi1-phi2}.
					\end{proof}

					\section*{Acknowledgments}
					The authors thank Benjamin D\"orich for helpful discussions, Lea Gatzke for her help in coding, and Lea Gatzke and Malik Scheifinger for their careful reading of an earlier version of this manuscript.

\bibliographystyle{amsplain}
\bibliography{refs}

\providecommand{\bysame}{\leavevmode\hbox to3em{\hrulefill}\thinspace}
\providecommand{\MR}{\relax\ifhmode\unskip\space\fi MR }
\providecommand{\MRhref}[2]{%
  \href{http://www.ams.org/mathscinet-getitem?mr=#1}{#2}
}
\providecommand{\href}[2]{#2}
\begin{thebibliography}{10}

\bibitem{Bir18}
C.~K. Birdsall and A.~B. Langdon, \emph{Plasma physics via computer
  simulation}, CRC Press, 2018.

\bibitem{Boris70}
J.~P. Boris, \emph{Relativistic plasma simulation-optimization of a hybrid
  code}, In: Proceeding of Fourth Conference on Numerical Simulations of
  Plasmas (1970), 3--67.

\bibitem{GarSS98}
B.~Garc{\'\i}a-Archilla, J.~M. Sanz-Serna, and R.~D. Skeel,
  \emph{Long-time-step methods for oscillatory differential equations}, SIAM J.
  Sci. Comput. \textbf{20} (1998), no.~3, 930--963.

\bibitem{GriH06}
V.~Grimm and M.~Hochbruck, \emph{Error analysis of exponential integrators for
  oscillatory second-order differential equations}, J. Phys. A: Math. Gen.
  \textbf{39} (2006), no.~19, 5495--5507.

\bibitem{HaiLS22}
E.~Hairer, C.~Lubich, and Y.~Shi, \emph{Large-stepsize integrators for
  charged-particle dynamics over multiple time scales}, Numer. Math.
  \textbf{151} (2022), no.~3, 659--691. \MR{4444838}

\bibitem{HaiLW20}
E.~Hairer, C.~Lubich, and B.~Wang, \emph{A filtered {B}oris algorithm for
  charged-particle dynamics in a strong magnetic field}, Numer. Math.
  \textbf{144} (2020), no.~4, 787--809. \MR{4081132}

\bibitem{HaiL18}
Ernst Hairer and Christian Lubich, \emph{Energy behaviour of the {B}oris method
  for charged-particle dynamics}, BIT \textbf{58} (2018), no.~4, 969--979.
  \MR{3882978}

\bibitem{HocL99}
M.~Hochbruck and C.~Lubich, \emph{A {G}autschi-type method for oscillatory
  second-order differential equations}, Numer. Math. \textbf{83} (1999), no.~3,
  403--426.

\bibitem{HocO10}
M.~Hochbruck and A.~Ostermann, \emph{Exponential integrators}, Acta Numer.
  \textbf{19} (2010), 209--286. \MR{2652783}

\bibitem{LiW22}
T.~Li and B.~Wang, \emph{Geometric continuous-stage exponential
  energy-preserving integrators for charged-particle dynamics in a magnetic
  field from normal to strong regimes}, Applied Numerical Mathematics
  \textbf{181} (2022), 1--22.

\bibitem{NguJT24}
T.~P. Nguyen, I.~Joseph, and M.~Tokman, \emph{Exploring exponential time
  integration for strongly magnetized charged particle motion}, Computer
  Physics Communications \textbf{304} (2024), 109294.

\bibitem{NguJT25}
\bysame, \emph{Nystr{\"o}m type exponential integrators for strongly magnetized
  charged particle dynamics}, Computer Physics Communications \textbf{317}
  (2025), 109848.

\bibitem{QinZXLST13}
H.~Qin, S.~Zhang, J.~Xiao, J.~Liu, Y.~Sun, and W.~M. Tang, \emph{Why is {B}oris
  algorithm so good?}, Physics of Plasmas \textbf{20} (2013), no.~8, 084503.

\bibitem{SchBHL25}
M.~Scheifinger, K.~Busch, M.~Hochbruck, and C.~Lasser, \emph{Time-integration
  of {G}aussian variational approximation for the magnetic {S}chrödinger
  equation}, Journal of Computational Physics \textbf{541} (2025), 114349.

\bibitem{Wan21}
B.~Wang, \emph{Exponential energy-preserving methods for charged-particle
  dynamics in a strong and constant magnetic field}, J. Comput. Appl. Math.
  \textbf{387} (2021), Paper No. 112617, 12. \MR{4199346}

\bibitem{WuW20}
Y.~Wu and B.~Wang, \emph{Explicit symmetric exponential integrators for
  charged-particle dynamics in a strong and constant magnetic field}, Int. J.
  Appl. Comput. Math. \textbf{6} (2020), no.~3, Paper No. 67, 15. \MR{4091998}

\end{thebibliography}
\end{document}